\def\mathcal{\mathscr}
\newtheorem{thm}{Theorem}[section]
\newtheorem{lem}[thm]{Lemma}
\newtheorem{cor}[thm]{Corollary}
\newtheorem{prop}[thm]{Proposition}
\theoremstyle{definition}
\newtheorem{rem}[thm]{\rm{Remark}}
\newtheorem{defn}[thm]{\rm{Definition}}
\newcommand{\mf}[1]{{\mathfrak{#1}}}
\newcommand{\mca}[1]{{\mathcal{#1}}}
\def\ad{\text{\rm ad}}
\def\Z{{\mathbb Z}}
\def\C{{\mathbb C}}
\def\R{{\mathbb R}}
\def\Cok{\text{\rm Coker}}
\def\Crit{\text{\rm Crit}}
\def\Cyl{\text{\rm Cyl}}
\def\dR{\text{\rm dR}}
\def\WFC{\text{\rm WFC}}
\def\WFH{\text{\rm WFH}}
\def\Im{\text{\rm Im}\,}
\def\ind{\text{\rm ind}}
\def\inn{\text{\rm in}}
\def\interior{\text{\rm int}}
\def\reg{\text{\rm reg}}
\def\RS{\text{\rm RS}}
\def\supp{\text{\rm supp}}
\def\st{\text{\rm st}}
\begin{document}
\pagestyle{plain}
\thispagestyle{plain}

\title[Handle attaching in Wrapped Floer Homology and brake orbits in classical Hamiltonian systems]
{Handle attaching in Wrapped Floer Homology and brake orbits in classical Hamiltonian systems}

\author[Kei Irie]{Kei Irie}
\address{Research Institute for Mathematical Sciences, Kyoto University, 
Kyoto 606-8502, Japan}
\email{iriek@kurims.kyoto-u.ac.jp}

\subjclass[2010]{Primary~34C25; Secondary~53D40}

\begin{abstract}
The objective of this note is to prove an existence result for brake orbits in classical Hamiltonian systems
(which was first proved by S.V.Bolotin) by using Floer theory.
To this end, we compute an open string analogue of symplectic homology (so called wrapped Floer homology)
of some domains in cotangent bundles, which appear naturally in the study of classical Hamiltonian systems.
The main part of the computations is to show invariance of wrapped Floer homology under certain handle attaching 
to domains. 
\end{abstract}

\maketitle

\section{Introduction}
First we recall the definition of classical Hamiltonian systems.
Let $N$ be a $n$-dimensional manifold.
Then, $T^*N$ carries a symplectic form $\omega_N:=\sum_{1 \le i \le n} dp_i \wedge dq_i$ 
where $(q_1,\ldots,q_n)$ is a local coordinate in $N$, and 
$(p_1,\ldots,p_n)$ is the associated coordinate on fibers.

Assume that $N$ carries a Riemannian metric.
Then, for $V \in C^\infty(T^*N)$, we define $H_V \in C^\infty(T^*N)$ by
$H_V(q,p)=V(q)+|p|^2/2$.
A pair of symplectic manifold $(T^*N,\omega_N)$ and $H_V \in C^\infty(T^*N)$ is called \textit{classical Hamiltonian system}.
Its \textit{Hamiltonian vector field} $X_{H_V}$ is defined by $i_{X_{H_V}}\omega_N =-dH_V$.
As is well-known, $X_{H_V}$ describes free motion of a particle on $N$ under potential energy given by $V$.

The following theorem is first proved by S.V.Bolotin \cite{Bolotin}.

\begin{thm}\label{thm:periodicorbits}
Let $N$ be a Riemannian manifold, and $V \in C^\infty(N)$.
If $S_h:=H_V^{-1}(h)$ is a compact and regular hypersurface in $T^*N$, 
then there exists a closed orbit of $X_{H_V}$ on $S_h$.
\end{thm}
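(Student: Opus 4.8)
The plan is to produce a \emph{brake orbit} on $S_h$: a nonconstant closed orbit of $X_{H_V}$ meeting the zero section $0_N$. This is the delicate case, and since a brake orbit is in particular a closed orbit, it suffices. The time reversal $\iota\colon(q,p)\mapsto(q,-p)$ preserves $H_V$ hence $S_h$, reverses $X_{H_V}$, and fixes $0_N$, so a brake orbit is obtained by reflecting via $\iota$ a single trajectory running from $0_N$ to $0_N$ in $S_h$; such a reflection closes up smoothly because $X_{H_V}$ is tangent to the fibre along $0_N$. Thus I need only produce one trajectory from $0_N$ to $0_N$. Since $dH_V=dV$ on $0_N$, regularity of $S_h$ forces $h$ to be a regular value of $V$, so $M:=\{V\le h\}$ is a compact manifold with (possibly empty) boundary $\Sigma:=\{V=h\}$; as a closed orbit lies over a single component of $M$, we may take $M$ connected. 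If $S_h=\varnothing$ there is nothing to prove; if $\Sigma=\varnothing$, then $0_N\cap S_h=\varnothing$, the Liouville field $\sum_i p_i\,\partial_{p_i}$ is outward transverse to $S_h$, and $\{H_V\le h\}$ is a Liouville filling of $S_h$ with infinite-dimensional symplectic homology, so $S_h$ carries a closed Reeb orbit, hence a closed orbit of $X_{H_V}$. So assume $\Sigma\neq\varnothing$; then $\{V<h\}\neq\varnothing$ as well.

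By the Maupertuis principle, over $\mathrm{int}\,M$ the flow of $X_{H_V}$ on $S_h$ is, up to reparametrization, the cogeodesic flow of the Jacobi metric $g_h:=2(h-V)g$, and the trajectories from $0_N$ to $0_N$ become the $g_h$-geodesics in $M$ meeting $\Sigma$ orthogonally at both ends --- equivalently, after doubling $M$ along $\Sigma$, the closed $g_h$-geodesics symmetric under the reflection. The obstruction to realizing these as Reeb chords of a Legendrian in a closed contact manifold is that $g_h$ degenerates to first order along $\Sigma$, so the ``unit cotangent bundle of $g_h$'' is not compact and the conormal directions of $\Sigma$ escape to infinity. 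I would handle this by fixing a \emph{desingularization}: a Liouville domain $W$ together with an exact Lagrangian $L\subset W$, $\partial L\subset\partial W$ Legendrian, which agrees with the Jacobi disc bundle and a conormal-type Lagrangian away from a neighbourhood of $\Sigma$ and is capped off near $\Sigma$, arranged so that (i) the Reeb chords of $\partial L$ of action below any prescribed bound are in bijection with the brake orbits of $S_h$ of length below a corresponding bound, and (ii) $W$ is obtained from a standard cotangent model by handle attachments of the type treated in the main theorem of this note.

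Granting the main theorem, $\WFH_*(W,L)$ is unchanged by those handle attachments, so it equals the wrapped Floer homology of the standard model; the known computation of wrapped Floer homology of cotangent bundles (Abbondandolo--Schwarz, Abouzaid) then evaluates it as the homology $H_*(\mca{P})$ of a space of paths from $\Sigma$ to $\Sigma$ (in $M$, or in its double). On the other hand, if $\partial L$ admitted no Reeb chord, the wrapped complex of $(W,L)$ would reduce to a Morse complex on $L$, so $\WFH_*(W,L)$ would be, up to a degree shift, isomorphic to $H_*(L)$, and since $L$ is an interval bundle over $\Sigma$ this is $H_*(\Sigma)$. But $H_*(\mca{P})$ is strictly larger than $H_*(\Sigma)$: the start-point map $\mca{P}\to\Sigma$ exhibits $H_*(\Sigma)$ as a direct summand, and the complement is nonzero --- already when $M$ is a ball one has $\mca{P}\simeq\Sigma\times\Sigma$. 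Hence $\partial L$ carries a Reeb chord, which by construction (i) is a brake orbit, and therefore $S_h$ carries a closed orbit of $X_{H_V}$.

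The reduction just sketched is soft; the real content is the main theorem, the invariance of wrapped Floer homology under the handle attachments that desingularize $S_h$ (equivalently, the degenerate Jacobi metric) along $\Sigma$. I expect the main difficulty to be to organize the wrapped Floer complexes so that the degenerate region near $\Sigma$ contributes nothing to them, together with the attendant action and compactness estimates there; once the handle-attaching invariance is in hand, the argument above is routine.
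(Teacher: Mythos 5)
Your broad strategy — reduce Theorem~\ref{thm:periodicorbits} to finding a brake orbit (Theorem~\ref{thm:brakeorbits}), handle $S_h\cap N=\emptyset$ separately, and use a wrapped Floer homology obstruction to the absence of Reeb chords — matches the paper, and your reflection-by-$\iota$ reasoning and reduction to $S_h\cap N\neq\emptyset$ are fine. But the core of your argument diverges from the paper in a way that introduces a real gap.

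The paper does not desingularize a Jacobi metric or pass to a cotangent model at all. It applies wrapped Floer homology directly to the Liouville quadruple $(D_h,\omega_N,Y_a,D_h\cap N)$ where $D_h=\{H_V\le h\}$ and $L=D_h\cap N=\{V\le h\}$ is a Lagrangian \emph{with boundary}, $\partial L\subset\partial D_h$ Legendrian. The point you flag as the obstruction — degeneracy of the Jacobi metric along $\Sigma$, equivalently that $\vec r=\sum p_i\partial_{p_i}$ is not transverse to $S_h$ along $S_h\cap N$ — is resolved not by desingularization but by perturbing the Liouville field to $Y_a=\vec r+a\,\widetilde{\nabla V}$ (Lemma~\ref{lem:classicalhamiltoniansystem}, Lemma~\ref{lem:DHVYA}); for small $a>0$ this is outward-transverse to $S_h$ and tangent to $L$, so the quadruple is already Liouville with no surgery on the domain. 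This is considerably cheaper than building a surrogate $(W,L)$ matching the brake-orbit dynamics.

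The more serious problem is the sign of your argument. You want to show that $\WFH_*(W,L)$ is \emph{nonzero and larger than $H_*(\Sigma)$} (via a path-space computation $\WFH_*\cong H_*(\mca P)$ in the spirit of Abbondandolo--Schwarz/Abouzaid), and deduce Reeb chords from the mismatch with the no-chord value. But the paper's main theorem (Theorem~\ref{mainthm}) computes $\WFH_*(D_h,\omega_N,D_h\cap N)=0$: by Lemma~\ref{lem:diffeotypeofDh} the invariant depends only on the diffeomorphism type of $D_h\cap N$, the disk case vanishes (Proposition~\ref{prop:WFHofdisk}), and handle attaching preserves vanishing (Theorem~\ref{thm:handleattaching}/Lemma~\ref{lem:handleattaching}). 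Reeb chords are then produced by Theorem~\ref{thm:WFH-}/Corollary~\ref{cor:WFH-}: if there were no Reeb chord, $\WFH_*$ would equal $H_*(L,\partial L)\neq 0$, contradiction. Your plan therefore pulls in two incompatible directions at once: if the handle-attaching invariance of this paper applies, the standard model is the disk and the answer is $0$, not a path-space homology. Conversely, the cotangent-bundle path-space theorems compute $\WFH$ of fibres or full conormals in an entire $T^*Q$, not of a compact Lagrangian with boundary in a bounded domain; they do not apply to the quadruple here, and the paper's vanishing result shows the identification $\WFH_*\cong H_*(\mca P)$ you expect cannot hold for it. (A smaller slip: with $L$ having boundary, the no-chord identification is $H_*(L,\partial L)$, not $H_*(L)\cong H_*(\Sigma)$; for the paper's $L$ this is the top class of a connected $n$-manifold with boundary, which is what makes the contradiction work.)

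In short: you have the right ingredients (brake-orbit reduction, wrapped Floer homology, handle-attaching invariance, and the Reeb-chord criterion), but the desingularized cotangent model is unnecessary and the claimed path-space computation is inconsistent with the very handle-attaching invariance you invoke. The paper's route — perturb the Liouville field, show $\WFH_*=0$ by reduction to the disk via handle attaching, then apply $\WFH_*=0\Rightarrow\mca C(\partial M,\partial L)\neq\emptyset$ — is both simpler and self-consistent.
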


When $S_h \cap N = \emptyset$, Theorem \ref{thm:periodicorbits} is easily obtained by the existence of 
closed geodisics on compact Riemannian manifolds, using Maupertuis-Jacobi principle.
So difficulty arises when $S_h \cap N \ne \emptyset$.
In this case, Theorem \ref{thm:periodicorbits} is obtained by the following result (\cite{Bolotin}):

\begin{thm}\label{thm:brakeorbits}
Let $N$ and $V$ are as in Theorem \ref{thm:periodicorbits}.
If $S_h \cap N \ne \emptyset$, there exists a non-trivial orbit of $X_{H_V}$ on $S_h$
, which starts from and ends at
$S_h \cap N$.
\end{thm}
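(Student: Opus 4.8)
The plan is to encode a non-trivial brake orbit on $S_h$ as a generator of a wrapped Floer homology group, and to produce such a generator by the handle-attaching invariance of wrapped Floer homology announced above. Write $\Sigma := S_h \cap N$ for the brake locus, $M_h := \{q \in N : V(q) \le h\}$ for the Hill region, and $D_h := H_V^{-1}((-\infty,h])$. Since $h$ is a regular value and $S_h$ is compact, $M_h$ is a non-empty compact manifold with boundary $\partial M_h = \{V = h\}$, and $\Sigma = \partial M_h \times \{0\}$ inside the zero section. An orbit of $X_{H_V}$ on the regular level $S_h$ is exactly a leaf of the characteristic foliation of $S_h$; after choosing a contact form on the relevant boundary such leaves become, up to reparametrization, Reeb chords. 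Thus the goal becomes: find a non-constant characteristic chord of $S_h$ with both endpoints on $\Sigma$.

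The obstacle to running Floer theory directly on $D_h$ is that $D_h$ is not a Liouville domain: for the radial Liouville field $Y = \sum_i p_i\,\partial_{p_i}$ on $T^*N$ one computes $dH_V(Y) = |p|^2$, which vanishes precisely along $\Sigma$, so $S_h$ is of contact type everywhere except along the brake locus. I would remove this degeneracy by a local modification of $D_h$ near $\Sigma$: in Darboux coordinates adapted to $S_h$ near a point of $\Sigma$, $D_h$ is modeled on $\{\,q_1 \ge |p|^2/2\,\}$ with $\Sigma = \{q_1 = 0,\ p = 0\}$, and replacing a tubular neighborhood of $\Sigma$ by a rounded model yields a genuine Liouville domain $W$ together with an exact Lagrangian $L \subset W$ with Legendrian boundary $\partial L$. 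This modification is the attachment of a standard handle along a disk-bundle neighborhood of $\Sigma$, and it is to be arranged so that the Reeb chords of $(\partial W, \partial L)$ of sufficiently small action are in bijection with the non-trivial brake orbits on $S_h$: no short chords are created inside the handle region, and chords living on neighboring level sets $H_V^{-1}(h')$ are separated off by the action filtration.

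Next I would compute $\WFH(W, L)$. By the handle-attaching invariance, $\WFH(W, L) \cong \WFH(W_0, L_0)$ for the un-handled model $(W_0, L_0)$, which is (a smoothing of) the disk cotangent bundle of the Hill region $M_h$ with its zero section. By the open-string Viterbo isomorphism this group is the homology of the space of paths in $M_h$ with both endpoints on $\partial M_h$. The key elementary input — Bolotin's topological observation — is that, $M_h$ being a non-empty compact manifold with boundary, the inclusion $\partial M_h \hookrightarrow M_h$ is not a homotopy equivalence (for instance $H_{\dim M_h}(M_h,\partial M_h;\Z/2)\ne 0$), so on the component of $M_h$ meeting $\Sigma$ this path space is not homotopy equivalent to its subspace of constant paths $\partial M_h$. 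Consequently $\WFH(W, L)$ is not exhausted by the classes carried by the constant chords, so there is a non-constant Reeb chord of $(\partial W, \partial L)$; transporting it through the modification of the previous step, this chord is a non-constant characteristic chord of $S_h$ with endpoints on $\Sigma$, i.e.\ a non-trivial orbit of $X_{H_V}$ on $S_h$ starting from and ending at $S_h \cap N$. This proves Theorem \ref{thm:brakeorbits}, and hence Theorem \ref{thm:periodicorbits} in the remaining case $S_h \cap N \ne \emptyset$.

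The hard part is the handle-attaching invariance invoked in the third step — the assertion that gluing on the handle of the second step leaves $\WFH$ unchanged. Establishing this calls for a Floer-theoretic continuation/neck-stretching argument showing that the extra generators localized in the handle region either lie above the relevant action window or cancel in pairs under the differential, which requires precise control of the Floer strips entering and leaving the handle. The remaining points are comparisons rather than hard analysis: checking in the second step that the action and grading data identify a small-action Reeb chord of $W$ with a genuine brake orbit on $S_h$ (and not with an orbit on a neighboring level, nor with a constant chord of the auxiliary Morse datum on $L$), and recording the Viterbo-type computation of $\WFH(W_0, L_0)$ for the cotangent bundle of a manifold with boundary.
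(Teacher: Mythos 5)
Your overall strategy --- make the sublevel set $D_h=H_V^{-1}\bigl((-\infty,h]\bigr)$ into a Liouville domain with Lagrangian the Hill region, compute its wrapped Floer homology via a handle-attaching invariance, and compare the answer with the contribution of constant chords to force a non-constant Reeb chord, i.e.\ a brake orbit --- is the same as the paper's. But the pivotal computation is wrong. You claim that the ``un-handled'' model has $\WFH$ isomorphic to the homology of the space of paths in $M_h$ with endpoints on $\partial M_h$, by an open-string Viterbo isomorphism. This already fails in the base case: when $M_h$ is a disk, the relevant Liouville quadruple is $\bigl(D^{2n},\omega_\st, D^{2n}\cap\{p=0\}\bigr)$ and its wrapped Floer homology is $0$ (proposition \ref{prop:WFHofdisk}: the only chord of the linear Hamiltonians is the constant at the origin, whose index escapes to $+\infty$ in the direct limit), whereas the path space of $D^n$ rel $S^{n-1}$ is homotopy equivalent to $S^{n-1}\times S^{n-1}$ and has plenty of homology. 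The discrepancy is structural, not a matter of conventions: the wrapping here takes place along $\partial D_h=S_h$, so the Reeb chords are reparametrized characteristics of the energy level $S_h$, not geodesic-type chords at cotangent infinity, and no Viterbo-type path-space formula applies. Consequently your final comparison (``$\WFH$ exceeds the constant contribution'') has no support. The correct statement, and the paper's main theorem (theorem \ref{mainthm}), is that $\WFH_*(D_h,\omega_N,D_h\cap N)=0$; the existence of a chord then follows from the opposite comparison, namely that a chord-free boundary would force $\WFH_*\cong H_*(L,\partial L)\ne 0$ (theorem \ref{thm:WFH-} and corollary \ref{cor:WFH-}), contradicting the vanishing.

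Two further points of divergence. First, the paper never rounds $D_h$ near $\Sigma$: the failure of the radial field $\vec r=\sum_i p_i\partial_{p_i}$ along the brake locus is repaired by tilting the Liouville field to $Y_a=\vec r+a\widetilde{\nabla V}$, which satisfies $dH_V(Y_a)>0$ on all of $S_h$ away from $\Crit(V)$ (lemma \ref{lem:classicalhamiltoniansystem}), so $(D_h,\omega_N,Y_a,D_h\cap N)$ is already a Liouville quadruple. Second, the handle attaching you invoke (a single handle glued along a neighborhood of $\Sigma$ to pass from $D^*M_h$ to $W$) is not the operation the paper's invariance theorem covers; the paper's handles are the Morse handles of $V$ of index $1\le k\le n-1$, used to build the Hill region up from a disk one critical point at a time (theorem \ref{thm:handleattaching} and lemma \ref{lem:handleattaching}), after first showing the invariant depends only on the diffeomorphism type of $D_h\cap N$ (lemma \ref{lem:diffeotypeofDh}). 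If you want to repair your argument, keep your first two steps in modified form (Liouville structure via $Y_a$, diffeomorphism-type invariance), replace the Viterbo computation by the vanishing result obtained inductively from the disk, and conclude via corollary \ref{cor:WFH-}.
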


Define $I:T^*N \to T^*N$ by $I(q,p)=(q,-p)$.
If $x:[0,l] \to S_h$ satisfies $\dot{x}=X_{H_V}(x)$ and $x(0), x(l) \in N$, then
$\overline{x}: [0,2l] \to S_h$ defined by
\[
\overline{x}(t) = \begin{cases}
             x(t) &(0 \le t \le l) \\
             I\bigl(x(2l-t)\bigr) &(l \le t \le 2l)
             \end{cases}
\]
is a closed orbit of $X_{H_V}$ (closed orbits of $X_{H_V}$ obtained in this way are so-called \textit{brake orbits}).
Hence Theorem \ref{thm:brakeorbits} implies Theorem \ref{thm:periodicorbits}.

In this paper, we deduce Theorem \ref{thm:brakeorbits} from computations of 
certain Floer-theoric invariant.
The invariant we use is an open string analogue of symplectic homology, and 
often called \textit{wrapped Floer homology}.
Foundations of wrapped Floer homology can be found in \cite{AbouzaidSeidel}
(they also construct an $A^\infty$-algebra structure on the chain complex underlying the homology).
Roughly speaking, wrapped Floer homology is defined for $(M,\omega,L)$, where $(M,\omega)$ is a compact 
symplectic manifold with contact type boundary, and $L$ is a Lagrangian of $(M,\omega)$ (in a precise sense, we need more 
data and additional conditions. see section 2 for details).
Let us denote the wrapped Floer homology for $(M,\omega,L)$ by $\WFH_*(M,\omega,L)$.

We explain our main theorem briefly.
Let $N$ be a Riemannian manifold, and $V \in C^\infty(N)$.
Assume that $S_h=H_V^{-1}(h)$ is compact.
Then, setting $D_h:=H_V^{-1}\bigl((-\infty,h]\bigr)$, $(D_h,\omega_N)$ is a compact symplectic manifold with 
contact type boundary, and we can define wrapped Floer homology for $(D_h,\omega_N,D_h \cap N)$ (for details, 
see section 4). Our main theorem is Theorem \ref{mainthm}, which asserts that
if $S_h \cap N \ne \emptyset$ and $D_h$ is connected, then $\WFH_*(D_h,\omega_N,D_h \cap N)=0$.

Combined with basic results of wrapped Floer homology, Theorem \ref{mainthm} implies 
Theorem \ref{thm:brakeorbits} (Details are explained in section 4).
Theorem \ref{mainthm} is proved as follows.
By "deformation invariance" of wrapped Floer homology (Proposition \ref{prop:isotopyinvarianceofWFH}), 
it is easy to show that $\WFH_*(D_h,\omega_N,D_h \cap N)$ depends only on diffeomorphism type of $D_h \cap N$.
When $D_h \cap N$ is diffeomorphic to the disk, 
it is easy to check that $\WFH_*(D_h,\omega_N,D_h \cap N)=0$.
Hence all we have to show is the invariance of 
$\WFH_*(D_h,\omega_N,D_h \cap N)$ under surgery on $D_h \cap N$ by attaching handles
(Lemma \ref{lem:handleattaching}).
This is proved by arguments which are similar to Cieliebak's arguments in \cite{Cieliebak}, 
where he proves the invariance of symplectic homology under subcritical handle attaching.

We explain the structure of this paper.
In section 2, we recall basics of wrapped Floer homology.
We treat somewhat broader class of Hamiltonians 
than usually considered in Floer theory for manifolds with boundary, 
because this is needed to carry out arguments in section 5.
For this reason, establishing $C^0$ estimate for Floer trajectories becomes harder than usual.
The precise statement of the $C^0$ estimate is stated in section 2 (Theorem \ref{thm:c0estimate}), and 
proved in section 3.
The proof given in section 3 is based on \cite{FloerHofer}.
In section 4, we explain basics of classical Hamiltonian systems, and state the main theorem (Theorem \ref{mainthm}).
We also reduce Theorem \ref{mainthm} to Lemma \ref{lem:handleattaching} in section 4.
Lemma \ref{lem:handleattaching} is proved in section 5.

Acknowledgements. I would like to appreciate professor Kenji Fukaya, for reading manuscripts of this note 
and giving precious suggestions.

\section{Wrapped Floer homology}

In this section, we recall basics of wrapped Floer homology, which we will use in the following of this paper.

\subsection{Liouville quadruple}

First we define the notion of \textit{Liouville quadruples}, for which we define wrapped Floer homology.

\begin{defn}
Let $(M,\omega)$ be a $2n$ dimensional compact symplectic manifold, 
$X \in \mca{X}(M)$, and $L$ be a Lagrangian of $M$.
A \textit{Liouville quadruple} is a quadruple $(M,\omega,X,L)$ with 
following properties:
\begin{enumerate}
\item $L_X\omega=\omega$.
\item $X$ points strictly outwards on $\partial M$.
\item $X_q \in T_qL$ for any $q \in L$.
\item $L$ is transverse to $\partial M$, and $\partial L = L \cap \partial M$.
\end{enumerate}
\end{defn} 

For a Liouville quadruple $(M,\omega,X,L)$, let 
$\lambda:=i_X\omega$.
Then, $\lambda|_L=0$.
$(\partial M, \lambda)$ is a contact manifold, and 
$\partial L$ is a Legendrean of $(\partial M, \lambda)$.
Recall that the \textit{Reeb vector field} $R$  on $(\partial M, \lambda)$ is characterized by $i_R\omega=0$, $\lambda(R)=1$.
Let $\mca{C}(\partial M, \lambda, \partial L)$ be the set of all \textit{Reeb chords} of $\partial L$ in $(\partial M, \lambda)$, i.e.
\[
\mca{C}(\partial M, \lambda, \partial L):
=\bigl\{x \colon [0,l] \to \partial M \bigm{|}l>0, x(0),x(l) \in \partial L, \, \dot{x}=R(x) \bigr\}.
\]
For $x \in \mca{C}(\partial M, \lambda, \partial L)$, let $\mca{A}(x):=\int_0^l x^*\lambda$.
Define the \textit{action spectrum} of $\partial L$
\[
\mca{A}(\partial M, \lambda, \partial L):=\bigl\{\mca{A}(x) \bigm{|} x \in \mca{C}(\partial M, \lambda, \partial L) \bigr\}.
\]
It is easy to verify that $\inf \mca{A}(\partial M, \lambda, \partial L)>0$.

Let $\hat{M}:=M \cup \partial M \times [1,\infty)$.
We extend $X \in \mca{X}(M)$ to $\hat{X} \in \mca{X}(\hat{M})$ by
$\hat{X} = \rho \partial_\rho$ on $\partial M \times [1,\infty)$, where
$\rho$ stands for coordinate on $[1,\infty)$.
Moreover, we extend $\lambda$ to $\hat{\lambda}$ by $\hat{\lambda}:=\rho\lambda$ on $\partial M \times [1,\infty)$, 
and $\omega$ to $\hat{\omega}:=d\hat{\lambda}$.
Then, $\hat{L}:= L \cup \partial L \times [1,\infty)$ is a Lagrangian of $(\hat{M},\hat{\omega})$.
We call $(\hat{M},\hat{\omega},\hat{X},\hat{L})$ the \textit{completion} of
$(M,\omega,X,L)$.

Define $\Phi \colon \partial M \times (0,\infty) \to \hat{M}$ by 
\[
\Phi(z,1) = z, \qquad \partial_\rho\Phi(z,\rho) = \rho^{-1}\hat{X}\bigl(\Phi(z,\rho)\bigr).
\]
Then, $\Phi^*\hat{\lambda}=\rho\lambda$.
We call $\Im (\Phi)$ the \textit{cylindrical part} of $\hat{M}$, and denote it by $\Cyl(\hat{M})$.
We often identify $\Cyl(\hat{M})$ with $\partial M \times (0,\infty)$ via $\Phi$.
For any $\rho \in (0,\infty)$, we define $M(\rho)$ to be the domain in $\hat{M}$, which is 
 bounded by the hypersurface $\partial M \times \{\rho\}$. i.e.
\[
M(\rho):= \begin{cases}
       M \cup \partial M \times (1,\rho] &(\rho \ge 1) \\
       M \setminus \partial M \times (\rho,1] &(\rho < 1).
       \end{cases}
\]

\subsection{Chords and indexes}

For $H \in C^\infty(\hat{M})$, let 
\[
\mca{C}(H)
:=\bigl\{ x \colon [0,1] \to \hat{M} \bigm{|} x(0),x(1) \in \hat{L},\,\dot{x}=X_H(x) \bigr\},
\]
where $X_H$ is the \textit{Hamiltonian vector field} of $H$, defined by 
$dH=-i_{X_H}\hat{\omega}$.

For $x \in \mca{C}(H)$ and $0 \le t \le 1$, let $\Phi_t \colon T_{x(0)}\hat{M} \to T_{x(t)}\hat{M}$ be the 
Poincare map of the flow generated by $X_H$.
$x \in \mca{C}(H)$ is called \textit{nondegenerate} if $\Phi_1 \colon T_{x(0)}\hat{M} \to T_{x(1)}\hat{M}$ satisfies
$\Phi_1(T_{x(0)}\hat{L}) \cap T_{x(1)}\hat{L} = 0$.

For nondegenerate $x \in \mca{C}(H)$, we define its index $\ind(x)$.
In the following of this paper, we assume that any Liouville quadruple $(M,\omega,X,L)$ satisfies
\[
\pi_1(M,L)=\pi_2(M,L)=0.
\]
This is quite strong assumption, but it is enough to consider this case for our objective.

Consider ${\R}^{2n}$ with coordinate $(q_1,\ldots,q_n,p_1,\ldots,p_n)$ and
the \textit{standard symplectic form} $\omega_\st:=\sum_{1 \le i \le n} dp_i \wedge dq_i$.
Let $\mca{L}(n)$ be the space of Lagrangian subspaces of $({\R}^{2n},\omega_\st)$.
Note that $\{p=0\} \in \mca{L}(n)$.

Let $x \in \mca{C}(H)$ and assume that $x$ is nondegenerate.
Let $D^+\colon=\{ z \in \C \mid |z| \le 1, \Im z \ge 0\}$ and
take $\overline{x} \colon D^+ \to \hat{M}$ such that
$\overline{x}(e^{i\pi\theta})=x(\theta)\,(0 \le \theta \le 1)$ and
$\overline{x}(D^+ \cap \R) \subset \hat{L}$
(such $\overline{x}$ exists since $\pi_1(\hat{M},\hat{L})=0$).
Take arbitrary isomorphism of vector bundles
$F: \overline{x}^*T\hat{M} \to (\R^{2n},\omega_\st) \times D^+$ over $D^+$,
such that $F_z: T_{\overline{x}(z)}\hat{M} \to \R^{2n}$ preserves symplectic form for any $z \in D^+$, and 
$F_z(T_{\overline{x}(z)}\hat{L})=\{p=0\}$ for any $z \in D^+\cap\R$.
Define $\Lambda: [0,1] \to \mca{L}(n)$ by 
$\Lambda (\theta):= F_{e^{i\pi\theta}}\bigl(\Phi_{\theta}(T_{x(0)}\hat{L})\bigr)$, and let
\[
\ind(x):= \frac{n}{2} + \mu_\RS\bigl(\Lambda,\{p=0\}\bigr),
\]
where $\mu_\RS$ is the Robbin-Salamon index introduced in \cite{Robbin-Salamon}.
Note that this definition is independent of the choice of $\overline{x}$ since $\pi_2(\hat{M},\hat{L})=0$.

\subsection{Hamiltonians}
Let $K$ be a compact set in $\hat{M}$ which contains $M$.
Then, $H \in C^\infty(\hat{M})$ is of \textit{contact type} on $\hat{M} \setminus K$, if and only if there exists a
smooth positive function $a$ on $\partial M$ and $b \in \R$ such that 
\[
(z,\rho) \in \hat{M}\setminus K \implies H(z,\rho)=a(z)\rho+b.
\]
$a$ and $b$ are uniquely determined by $H$, and denoted by $a_H$, $b_H$.
The set of all $H \in C^\infty(\hat{M})$ which are of contact type on $\hat{M} \setminus K$ 
is denoted by $\mca{H}_K(\hat{M})$.
$H \in \mca{H}_K(\hat{M})$ is called \textit{admissible} if $1 \notin \mca{A}(\partial M, a_H^{-1}\lambda, \partial L)$
and all elements of $\mca{C}(H)$ are nondegenerate.
The set of all admissible elements of
$\mca{H}_K(\hat{M})$ is denoted by $\mca{H}_{K,\ad}(\hat{M})$.
Let $\mca{H}(\hat{M}):=\bigcup_K \mca{H}_K(\hat{M})$ and $\mca{H}_\ad(\hat{M}):=\bigcup_K \mca{H}_{K,\ad}(\hat{M})$, 
where $K$ runs over all compact sets in $\hat{M}$ which contain $M$.
It is easy to verify that if $H \in \mca{H}_\ad(\hat{M})$, then 
$\mca{C}(H)$ is a finite set.

Let $H, H' \in \mca{H}_{\ad}(\hat{M})$. 
$(H^s)_{s \in \R}$, a smooth family of elements of $\mca{H}(\hat{M})$, is called
\textit{monotone homotopy} from $H$ to $H'$, if it satisfies following conditions:
\begin{enumerate}
\item There exists a compact set $K$ such that $H^s \in \mca{H}_K(M)$ for any $s$.
\item There exists $s_0>0$ such that:
\begin{enumerate}
\item $H^s=\begin{cases}
                                      H &(s \le -s_0)\\
                                      H'&(s \ge s_0)\\
                                      \end{cases}$.
\item For any $s \in (-s_0,s_0)$, $\partial_s a_{H^s}(z)>0$ for any $z \in \partial M$.
\end{enumerate}
\end{enumerate}

\subsection{Almost complex structures}

Let $J$ be an almost complex structure on $\hat{M}$.
$J$ is \textit{compatible with} $\hat{\omega}$ if and only if
\[
\langle\,\cdot\,,\,\cdot\,\rangle_J \colon T\hat{M} \times T\hat{M} \to \R; \quad (v,w) \mapsto \hat{\omega}(v,Jw)
\]
is a Riemannian metric on $\hat{M}$.
We denote the set of almost complex structures on $\hat{M}$ which are compatible with $\hat{\omega}$
by $\mca{J}(\hat{M},\hat{\omega})$. We often abbreviate it as $\mca{J}(\hat{M})$.

For smooth positive function $a$ on $\partial M$, define diffeomorphism
\[
\Phi_a : \partial M \times (0,\infty) \to \Cyl(\hat{M});\quad (z,\rho) \mapsto \bigl(z, a(z)^{-1} \rho \bigr).
\]

Let $\lambda^a:=a^{-1}\lambda \in \Omega^1(\partial M)$.
Then, $(\Phi_a)^*(\hat{\lambda})=\rho\lambda^a$.
Let $\xi^a$ and $R^a$ be the contact distribution and the Reeb flow on $(\partial M, \lambda^a)$.

For $v \in T(\partial M)$, let 
\[
\overline{v}:=(v,0) \in T(\partial M) \oplus \R \partial_\rho = T\bigl(\partial M \times (0,\infty)\bigr).
\]
There is a natural decomposition 
\[
T\bigl(\partial M \times (0,\infty) \bigr) = \overline{\xi^a} \oplus \R \overline{R^a} \oplus \R \partial_\rho,
\]
where $\overline{\xi^a}=\{ \overline{v} \mid v \in \xi^a \}$.

\begin{defn}\label{defn:contacttype}
Let $K$ be a compact set in $\hat{M}$ which contains $M$.
Then, $J \in \mca{J}(\hat{M})$ is \textit{of contact type} on $\hat{M}\setminus K$ with respect to $a$, if 
$\Phi_a^*J$ satisfies following:
\begin{enumerate}
\item $\Phi_a^*J$ preserves $\overline{\xi^a}$ on $\Phi_a^{-1}(\hat{M} \setminus K)$.
\item There exists $J^\infty$, an almost complex structure on $\xi^a$, such that
$d\pi|_{\overline{\xi^a}} \circ \Phi_a^*J|_{\overline{\xi^a}} = J^\infty \circ d\pi|_{\overline{\xi^a}}$ 
on $\Phi_a^{-1}(\hat{M} \setminus K)$. ($\pi$ denotes the natural projection to $\partial M$.)
\item There exists $c_J>0$ such that 
$\Phi_a^*J(\partial_\rho)= \frac{1}{\rho c_J}\overline{R^a}$ on $\Phi_a^{-1}(\hat{M} \setminus K)$.
\end{enumerate}
\end{defn}

We denote the set of $J \in \mca{J}(\hat{M})$ which are of contact type on $\hat{M}\setminus K$
with respect to $a$, by $\mca{J}_{a,K}(\hat{M})$.
Moreover, $\mca{J}_a(\hat{M}):=\bigcup_K \mca{J}_{a,K}(\hat{M})$ where $K$ runs over all compact sets in $\hat{M}$
which contain $M$.
Clearly, for two positive functions $a$ and $a'$, if $a/a'$ is a constant function then 
$\mca{J}_{a,K}(\hat{M})=\mca{J}_{a',K}(\hat{M})$.

Let $J \in \mca{J}_a(\hat{M})$, and $J^\infty$ be as in (2) in Definition \ref{defn:contacttype}.
Abbreviate the metric $\Phi_a^*\bigl(\langle\,\cdot\,,\,\cdot\,\rangle_J\bigr)$ on $\partial M \times (0,\infty)$
by $\langle\,\cdot\,,\,\cdot\,\rangle_{a,J}$.
Moreover, define a metric $\langle\,\cdot\,,\,\cdot\,\rangle_{a,J,\partial M}$ on $\partial M$ by 
\begin{itemize}
\item $\langle v, w \rangle_{a,J,\partial M} = (d\lambda^a)(v,J^\infty w)$ on $\xi^a$,
\item $\langle v, R^a \rangle_{a,J,\partial M} = 0$ for any $v \in \xi^a$,
\item $\big\lvert R^a \big\rvert_{a,J,\partial M}=c_J^{\frac{1}{2}}$.
\end{itemize}
Then, following properties are verified by simple calculation.

\begin{lem}\label{lem:metric}
\begin{enumerate}
\item On $\Phi_a^{-1}(\hat{M}\setminus K)$, $\overline{\xi^a}$, $\overline{R^a}$, $\partial_\rho$ are 
orthogonal to each other with respect to $\langle\,\cdot\,,\,\cdot\,\rangle_{a,J}$.
\item For $(z,\rho) \in \Phi_a^{-1}(\hat{M}\setminus K)$ and $v \in T(\partial M)$, 
$\big\lvert \overline{v}(z,\rho) \big\rvert_{a,J} = \rho^{\frac{1}{2}}|v|_{a,J,\partial M}$.
\item For $(z,\rho)\in \Phi_a^{-1}(\hat{M}\setminus K)$, $\big\lvert \partial_\rho(z,\rho) \big\rvert_{a,J} = (\rho c_J)^{-1/2}$.
\end{enumerate}
\end{lem}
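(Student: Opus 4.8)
The plan is to carry out every computation in the model $\partial M \times (0,\infty)$ via the diffeomorphism $\Phi_a$. Writing $\tilde J := \Phi_a^*J$ and recalling $(\Phi_a)^*\hat\lambda = \rho\lambda^a$, one has $\Phi_a^*\hat\omega = d(\rho\lambda^a) = d\rho\wedge\lambda^a + \rho\,d\lambda^a$, hence $\langle v,w\rangle_{a,J} = (d\rho\wedge\lambda^a)(v,\tilde J w) + \rho\,(d\lambda^a)(v,\tilde J w)$. Here $\lambda^a$ and $d\lambda^a$ are the pullbacks to $\partial M \times (0,\infty)$, so they annihilate $\partial_\rho$; moreover $\lambda^a(\overline v) = 0$ for $v \in \xi^a$, $\lambda^a(\overline{R^a}) = 1$, $i_{R^a}d\lambda^a = 0$, $d\rho(\overline v) = 0$, and $d\rho(\partial_\rho) = 1$. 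These evaluations are what the proof rests on.

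First I would record the action of $\tilde J$ on the three summands. By Definition \ref{defn:contacttype}(1),(2), $\tilde J$ maps $\overline{\xi^a}$ to itself and acts as $J^\infty$ after applying $d\pi$; by (3), $\tilde J(\partial_\rho) = (\rho c_J)^{-1}\overline{R^a}$; and since $\tilde J^2 = -\id$ this forces $\tilde J(\overline{R^a}) = -\rho c_J\,\partial_\rho$. Together with the evaluations of $d\rho$, $\lambda^a$, $d\lambda^a$ listed above, this reduces the lemma to a short list of elementary evaluations of $d\rho\wedge\lambda^a + \rho\,d\lambda^a$ on pairs drawn from $\{\overline u : u\in\xi^a\}$, $\overline{R^a}$, $\partial_\rho$.

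Then I would verify (1) by computing $\langle\overline u,\partial_\rho\rangle_{a,J}$ and $\langle\overline u,\overline{R^a}\rangle_{a,J}$ for $u \in \xi^a$, and $\langle\overline{R^a},\partial_\rho\rangle_{a,J}$: in the first two the $d\rho\wedge\lambda^a$ term vanishes because $d\rho(\overline u) = 0$ and $\lambda^a(\overline u) = 0$, while the $d\lambda^a$ term vanishes because $d\lambda^a$ kills $\partial_\rho$ and $i_{R^a}d\lambda^a = 0$; the third vanishes by antisymmetry, since $\tilde J\partial_\rho$ is proportional to $\overline{R^a}$. For (2), using (1) it suffices to treat $u\in\xi^a$ and $u = R^a$ separately: for $u\in\xi^a$ one gets $\langle\overline u,\overline u\rangle_{a,J} = \rho\,(d\lambda^a)(u,J^\infty u) = \rho\,|u|^2_{a,J,\partial M}$ straight from the definition of $\langle\cdot,\cdot\rangle_{a,J,\partial M}$ on $\xi^a$; for $u = R^a$ one computes $\langle\overline{R^a},\overline{R^a}\rangle_{a,J} = -\rho c_J\,(d\rho\wedge\lambda^a)(\overline{R^a},\partial_\rho) = \rho c_J = \rho\,|R^a|^2_{a,J,\partial M}$, and the general case follows by expanding $u = u_\xi + \lambda^a(u)R^a$ and invoking orthogonality. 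Finally (3) is the single computation $\langle\partial_\rho,\partial_\rho\rangle_{a,J} = (\rho c_J)^{-1}(d\rho\wedge\lambda^a)(\partial_\rho,\overline{R^a}) = (\rho c_J)^{-1}$.

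I do not expect a serious obstacle: the only point requiring genuine care is keeping sign conventions consistent — the sign in $\Phi_a^*\hat\omega = d\rho\wedge\lambda^a + \rho\,d\lambda^a$, the identity $\tilde J(\overline{R^a}) = -\rho c_J\,\partial_\rho$ deduced from $\tilde J^2 = -\id$, and the orientation convention in $d\rho\wedge\lambda^a$ — after which the whole statement is a direct evaluation.
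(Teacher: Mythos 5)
Your computation is correct and is exactly the "simple calculation" the paper declines to write out (the paper states Lemma 2.3 with no proof). The key identities $\Phi_a^*\hat\omega = d\rho\wedge\lambda^a + \rho\,d\lambda^a$, $\tilde J|_{\overline{\xi^a}} = \overline{J^\infty}$, $\tilde J(\partial_\rho) = (\rho c_J)^{-1}\overline{R^a}$, and the deduced $\tilde J(\overline{R^a}) = -\rho c_J\,\partial_\rho$ are all right, and the sign bookkeeping in the three evaluations of $d\rho\wedge\lambda^a$ works out (in particular $(d\rho\wedge\lambda^a)(\partial_\rho,\overline{R^a}) = 1$, giving $|\partial_\rho|_{a,J}^2 = (\rho c_J)^{-1}$, and $(d\rho\wedge\lambda^a)(\overline{R^a},\partial_\rho) = -1$, cancelling the minus in $-\rho c_J$ to give $|\overline{R^a}|_{a,J}^2 = \rho c_J = \rho|R^a|_{a,J,\partial M}^2$). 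The splitting $u = u_\xi + \lambda^a(u)R^a$ and the double appeal to orthogonality (ambient orthogonality from part (1), boundary orthogonality from the definition of $\langle\cdot,\cdot\rangle_{a,J,\partial M}$) closes part (2) cleanly. Nothing to add.
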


\subsection{Floer equation}

Let $H \in \mca{H}_{\ad}(\hat{M})$, and $(J_t)_{t \in [0,1]}$ be a smooth family of elements of $\mca{J}(\hat{M})$.
For $x_-, x_+ \in \mca{C}(H)$, 
\begin{align*}
\hat{\mca{M}}_{H,(J_t)_t}(x_{-},x_{+}):=&\bigl\{u \colon \R \times [0,1] \to \hat{M} \bigm{|} \partial_s u - J_t\bigl(\partial_t u -X_H(u) \bigr)=0,\\
&\qquad\qquad u(\R \times \{0,1\}) \subset \hat{L},\, u(s)\to x_{\pm}\,(s \to \pm \infty) \bigr\}.
\end{align*}
$\hat{\mca{M}}_{H,(J_t)_t}$ admits a natural $\R$ action.
We denote the quotient by $\mca{M}_{H,(J_t)_t}$.

We also consider cases where Hamiltonians are time-dependent.
Let $H, H' \in \mca{H}_{\ad}(\hat{M})$ and $(H^s)_{s \in \R}$ be a monotone homotopy from $H$ to $H'$.
Let $(J^s_t)_{s \in \R, t \in [0,1]}$ be a smooth family of elements of $\mca{J}(\hat{M})$.
For $x_{-} \in \mca{C}(H)$ and $x_{+} \in \mca{C}(H')$, 
\begin{align*}
\hat{\mca{M}}_{(H^s,J^s_t)_{s,t}}(x_{-},x_{+}):=&\bigl\{u \colon \R \times [0,1] \to \hat{M} \bigm{|}\partial_s u - J^s_t\bigl(\partial_t u
 - X_{H^s}(u)\bigr)=0,\\
&\qquad\qquad u(\R \times \{0,1\}) \subset \hat{L},\, u(s)\to x_{\pm}\,(s \to \pm \infty) \bigr\}.
\end{align*}

For $x \in \mca{C}(H)$, we define its \textit{action} by
\[
\mca{A}_H(x) := \int_0^1 x^* \hat{\lambda} - H\bigl(x(t)\bigr) dt .
\]
The following lemma can be proved by simple calculation.

\begin{lem}\label{lem:energyformula}
For $x_{-} \in \mca{C}(H)$, $x_{+} \in \mca{C}(H')$, and $u \in \hat{\mca{M}}_{(H^s,J^s_t)_{s,t}}(x_{-},x_{+})$, 
\[
-\partial_s\bigl(\mca{A}_{H^s}\bigl(u(s)\bigr)\bigr)=
\int_0^1 \big\lvert \partial_s u(s,t) \big\rvert^2_{J^s_t} + \partial_sH^s\bigl(u(s,t)\bigr) dt.
\]
In particular, if
$\hat{\mca{M}}_{(H^s,J^s_t)_{s,t}}(x_{-},x_{+}) \ne \emptyset$, then
$\mca{A}_H(x_{-}) > \mca{A}_{H'}(x_+)$.
\end{lem}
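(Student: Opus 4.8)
The plan is the standard Floer energy computation, with attention to the Lagrangian boundary. First I would expand $\mca{A}_{H^s}(u(s)) = \int_0^1 \hat{\lambda}(\partial_t u(s,t))\,dt - \int_0^1 H^s(u(s,t))\,dt$ and differentiate each term in $s$.

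For the one-form term, I would pull $u^*\hat{\lambda}$ back to $\R \times [0,1]$ and use $d\hat{\lambda} = \hat{\omega}$ to get the pointwise identity $\partial_s\bigl(\hat{\lambda}(\partial_t u)\bigr) = \partial_t\bigl(\hat{\lambda}(\partial_s u)\bigr) + \hat{\omega}(\partial_s u, \partial_t u)$. Integrating over $t \in [0,1]$ turns the first right-hand term into the boundary contribution $\hat{\lambda}(\partial_s u)\big|_{t=0}^{t=1}$, which vanishes: since $u(s,0), u(s,1) \in \hat{L}$ for all $s$, the vectors $\partial_s u(s,0), \partial_s u(s,1)$ are tangent to $\hat{L}$, and $\hat{\lambda}|_{\hat{L}} = 0$ (because $\lambda|_L = 0$ and $\hat{\lambda} = \rho\lambda$ on the cylindrical end). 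Hence $\partial_s \int_0^1 \hat{\lambda}(\partial_t u)\,dt = \int_0^1 \hat{\omega}(\partial_s u, \partial_t u)\,dt$. For the Hamiltonian term, $\partial_s \int_0^1 H^s(u)\,dt = \int_0^1 (\partial_s H^s)(u)\,dt + \int_0^1 dH^s(\partial_s u)\,dt$, and by the convention $dH^s = -i_{X_{H^s}}\hat{\omega}$ the last integrand is $\hat{\omega}(\partial_s u, X_{H^s}(u))$. Assembling the pieces gives $\partial_s \mca{A}_{H^s}(u(s)) = \int_0^1 \hat{\omega}\bigl(\partial_s u,\, \partial_t u - X_{H^s}(u)\bigr)\,dt - \int_0^1 (\partial_s H^s)(u)\,dt$; then I would substitute the Floer equation in the form $\partial_t u - X_{H^s}(u) = -J^s_t \partial_s u$ and use $\hat{\omega}(v, J^s_t v) = \langle v, v\rangle_{J^s_t} = |v|^2_{J^s_t}$ to rewrite the first integrand as $-|\partial_s u|^2_{J^s_t}$. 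Moving the overall sign yields the claimed identity.

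For the ``in particular'' clause, I would integrate the identity over $s \in \R$. Since $(H^s)_s$ is a monotone homotopy, $H^s \equiv H$ for $s \le -s_0$ and $H^s \equiv H'$ for $s \ge s_0$, so $\mca{A}_{H^s}(u(s)) \to \mca{A}_H(x_-)$ as $s \to -\infty$ and $\to \mca{A}_{H'}(x_+)$ as $s \to +\infty$ (using $u(s) \to x_\pm$); thus $\mca{A}_H(x_-) - \mca{A}_{H'}(x_+) = \int_{-\infty}^{\infty} \int_0^1 |\partial_s u|^2_{J^s_t} + (\partial_s H^s)(u)\,dt\,ds$. The first summand of the integrand is $\ge 0$ pointwise, and the second is $\ge 0$ since $\partial_s H^s = 0$ for $|s| \ge s_0$ and $\partial_s H^s > 0$ on $\hat{M}$ for $|s| < s_0$; the strict positivity of $\partial_s H^s$ on the open set $(-s_0, s_0) \times [0,1]$ (evaluated along $u$) makes the integral strictly positive, so $\mca{A}_H(x_-) > \mca{A}_{H'}(x_+)$.

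I do not expect any genuine obstacle here: the computation is routine, and the only points requiring care are the vanishing of the Lagrangian boundary term (which is exactly why the conditions $\lambda|_L = 0$ and its preservation under completion are in place) and keeping the sign conventions for $X_H$ and for the Floer equation consistent throughout.
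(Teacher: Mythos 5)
Your computation is correct and it is exactly the ``simple calculation'' the paper alludes to without writing out: differentiate the action, use $d\hat{\lambda}=\hat{\omega}$ to turn $\partial_s\bigl(\hat{\lambda}(\partial_t u)\bigr)$ into $\partial_t\bigl(\hat{\lambda}(\partial_s u)\bigr)+\hat{\omega}(\partial_s u,\partial_t u)$, kill the boundary term via $\hat{\lambda}|_{\hat{L}}=0$ (which you correctly trace back to $L$ Lagrangian with $X$ tangent and $\partial L$ Legendrean), combine with the Hamiltonian term using $dH^s=-i_{X_{H^s}}\hat{\omega}$, and substitute the Floer equation together with $\hat{\omega}(v,J^s_t v)=|v|^2_{J^s_t}$. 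The strict inequality via $\partial_s H^s>0$ on $(-s_0,s_0)\times\hat{M}$ from the definition of monotone homotopy is also the right and cleanest way to close the argument, so there is nothing to add.
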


We sometimes call elements of $\hat{\mca{M}}_{H,(J_t)_t}(x_-,x_+)$ and $\hat{\mca{M}}_{(H^s,J^s_t)_{s,t}}(x_-,x_+)$ 
\textit{Floer trajectories} from $x_-$ to $x_+$.
The next theorem asserts the existence of $C^0$ estimates for Floer trajectories.
This is proved in section 3.

\begin{thm}\label{thm:c0estimate}
\begin{enumerate}
\item
Let $H \in \mca{H}_{\ad}(\hat{M})$ and $(J_t)_{0 \le t \le 1}$ be a family of elements of $\mca{J}(\hat{M})$.
Assume that there exists a compact set $K$ in $\hat{M}$ such that 
$J_t \in \mca{J}_{a_H,K}(\hat{M})$ for any $t$.
Then, there exists a compact set $B \subset \hat{M}$ such that for any $x_-, x_+ \in \mca{C}(H)$ and 
$u \in \hat{\mca{M}}_{(H,J_t)_t}(x_-,x_+)$, $u\bigl(\R \times [0,1]\bigr) \subset B$.
\item
Let $H, H' \in \mca{H}_{\ad}(\hat{M})$ and 
$(H^s)_s$ be a monotone homotopy from $H$ to $H'$.
Let $(J^s_t)_{s,t}$ be a family of elements of $\mca{J}(\hat{M})$ such that
for sufficiently large $s_0>0$, 
\[
J^s_t=\begin{cases}
      J^{-s_0}_t &(s \le -s_0),\\
      J^{s_0}_t  &(s \ge s_0).
      \end{cases}
\]
Assume that there exists a compact set $K$ in $\hat{M}$, such that
$H^s \in \mca{H}_K(\hat{M})$ and $J^s_t \in \mca{J}_{a_{H^s},K}(\hat{M})$ for any $s, t$.
Then, there exists a compact set $B \subset \hat{M}$, such that for
any $x_- \in \mca{C}(H)$, $x_+ \in \mca{C}(H')$ and 
$u \in \hat{\mca{M}}_{(H^s,J^s_t)_{s,t}}(x_-, x_+)$, 
$u\bigl(\R \times [0,1]\bigr) \subset B$.
\end{enumerate}
\end{thm}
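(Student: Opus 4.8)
The plan is to establish the $C^0$ a priori estimate by a maximum-principle type argument applied to the coordinate function $\rho$ on the cylindrical part $\Cyl(\hat M)$, adapted to the ``broken'' contact type condition we have imposed. First I would reduce part (2) to part (1): since the homotopy $(H^s)_s$ and the almost complex structures $(J^s_t)_{s,t}$ are independent of $s$ outside $[-s_0,s_0]$, and since $(H^s)_s$ shares a common compact set $K$, the only genuinely new feature in (2) is the presence of the $\partial_s H^s$ term, which by the monotonicity condition $\partial_s H^s>0$ works in the favorable direction for the maximum principle; so the argument is formally the same once (1) is understood, and I would carry out (1) in detail and indicate the modifications for (2). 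I would then fix a large $\rho_0$ so that $H$ is of the form $a_H(z)\rho+b_H$ and each $J_t$ is of contact type with respect to $a_H$ on $\{\rho\ge\rho_0\}$, and work with the rescaled coordinates $\Phi_{a_H}$, in which $\Phi_{a_H}^*\hat\lambda=\rho\lambda^{a_H}$ and the Hamiltonian becomes (a positive constant times) $\rho$ plus a constant.

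Next I would write down the Floer equation in the splitting $T(\partial M\times(0,\infty))=\overline{\xi^{a_H}}\oplus\R\overline{R^{a_H}}\oplus\R\partial_\rho$ provided by Lemma \ref{lem:metric}, and compute $\Delta\rho$ (with respect to the domain Laplacian on $\R\times[0,1]$) for a Floer trajectory $u$ whose image meets the region $\{\rho>\rho_0\}$. Using the contact type conditions (1)--(3) of Definition \ref{defn:contacttype} together with the orthogonality statement of Lemma \ref{lem:metric}, the standard computation (as in \cite{FloerHofer}) should give an inequality of the form $\Delta(\rho\circ u)\ge 0$, i.e.\ $\rho\circ u$ is subharmonic, wherever $\rho\circ u>\rho_0$. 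The asymptotic conditions force $\rho\circ u\to\rho(x_\pm)$ as $s\to\pm\infty$, and the Lagrangian boundary conditions $u(\R\times\{0,1\})\subset\hat L=L\cup\partial L\times[1,\infty)$ mean that along $t\in\{0,1\}$ the trajectory lies over $\partial L$, where the Neumann-type condition $\partial_t(\rho\circ u)=0$ holds (because $X_H$ is tangent to the level sets in the $\rho$-direction only through the Reeb component, and $\partial_\rho$ is $\langle\cdot,\cdot\rangle_J$-orthogonal to $T\hat L$ there); hence the maximum principle on the half-open strip applies and $\rho\circ u$ attains no interior or free-boundary maximum exceeding $\max(\rho_0,\rho(x_-),\rho(x_+))$. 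Since $\mca C(H)$ is finite, $\sup_{x\in\mca C(H)}\rho(x)$ is finite, so this bounds $\rho\circ u$ uniformly and confines $u$ to a compact set $B$.

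The main obstacle, and the reason Section 3 is needed rather than a one-line citation, is that we have deliberately enlarged the class of admissible Hamiltonians and almost complex structures: $a_H$ is an arbitrary positive function on $\partial M$, not a constant, so the contact form $\lambda^{a_H}=a_H^{-1}\lambda$ genuinely differs from $\lambda$, the Reeb dynamics are rescaled in a $z$-dependent way, and the metric $\langle\cdot,\cdot\rangle_{a_H,J}$ is not simply a product metric on the cylinder. Consequently the subharmonicity computation must be done with the $z$-dependent splitting and the metric formulas of Lemma \ref{lem:metric} rather than in the model $\SR\times\partial M$ geometry; one must check carefully that the cross terms arising from differentiating $a_H(z)$ do not spoil the sign of $\Delta(\rho\circ u)$. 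I expect this to come down to the fact that, after the change of coordinates $\Phi_{a_H}$, the Hamiltonian $H$ is \emph{exactly} linear in $\rho$ with $z$-independent slope, so the only $z$-dependence sits in the contact structure on the $\{\rho=\text{const}\}$ slices, and the contact type conditions on $J$ are precisely engineered (via the constant $c_J$ and the splitting) to make the relevant term a nonnegative multiple of $|\partial_s u|^2_J$ plus a term controlled by $\partial_s H^s\ge0$ in the homotopy case. Once that sign is secured, the rest is the routine maximum-principle and compactness wrap-up.
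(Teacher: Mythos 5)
There is a genuine gap at the heart of your plan: the function $\rho\circ u$ is \emph{not} subharmonic in the setting of this theorem, and the whole-strip maximum principle you invoke is therefore unavailable. The cross terms you flag as needing a ``careful check'' do in fact spoil the sign. Carrying out the computation in the splitting of Lemma \ref{lem:metric} (apply $d\rho$ and $\lambda^s$ to the Floer equation for $v=\Phi_{a_H}^{-1}\circ u$), one finds
\[
\Delta\rho \;=\; c_{s,t}\,|\partial_s v|_{s,t}^2 \;+\;\rho\cdot\Bigl(\partial_s\bigl(c_{s,t}+\partial_s a^s(z)\,a^s(z)^{-1}\bigr)-c_{s,t}\,\partial_s\lambda^s(\partial_t z)+\partial_t c_{s,t}\,\lambda^s(\partial_s z)-\partial_s c_{s,t}\,\lambda^s(\partial_t z)\Bigr),
\]
and the terms multiplying $\rho$ have no definite sign: they are produced by the non-constancy of $a_H$ on $\partial M$ and by the $t$- (and $s$-) dependence of the constants $c_{J^s_t}$, and they do not vanish even in part (1). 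After estimating them via Cauchy--Schwarz against the good term $c_{s,t}|\partial_s v|^2_{s,t}$, the best one gets is a differential inequality of the form $\Delta\rho+c_2\rho\ge 0$, hence $\Delta\alpha^u+c_0\alpha^u+c_1\ge0$ for the truncated function $\alpha^u=\varphi(\rho\circ u)$. For such an inequality the maximum principle fails on wide domains (eigenfunctions of $\Delta+c_0$ violate it), so knowing only the asymptotics at $s\to\pm\infty$ and the Neumann condition on $\R\times\{0,1\}$ does not bound $\alpha^u$. This is exactly the point where the enlarged class of Hamiltonians makes the estimate ``harder than usual,'' and your proposal resolves it by an expectation that is false.

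The paper's proof therefore needs two further ingredients that are absent from your outline. First, the Floer--Hofer maximum principle (Lemma \ref{lem:mponstrip}, adapted to Neumann boundary conditions by reflecting the strip to a cylinder), which is valid only on strips $I\times[0,1]$ of width $|I|\le\delta$ with $\delta^2 c_0<\pi^2$ and requires a priori bounds on $\alpha^u$ along the two vertical edges $\partial I\times[0,1]$. Second, and most substantially, Lemma \ref{lem:sequence}: a $\delta$-dense sequence of vertical lines $s=s_k$ on which $\sup_t\alpha^u(s_k,t)$ is bounded by a constant depending only on $x_\pm$ and $\delta$. This is obtained from the energy identity (Lemma \ref{lem:energyformula}), which bounds $\int_0^1|\partial_s u|^2+\partial_sH^s\,dt$ on a dense set of $s$, combined with a compactness-by-contradiction argument (Lemma \ref{lem:sequence2}) showing that a path with boundary on $\hat L$ and small Floer-energy density cannot live entirely far out in the cylindrical end: a sequence of such paths escaping to infinity would converge to a Reeb chord of $\lambda^{s_\infty}$ of period $1$, which is excluded either by admissibility ($1\notin\mca{A}(\partial M,a_H^{-1}\lambda,\partial L)$, forcing $s_\infty\in(-s_0,s_0)$) or, in the homotopy case, by the strict monotonicity $\partial_s a_{H^s}>0$, which makes the $\partial_sH^s$ term in the energy blow up. None of this structure (narrow-strip maximum principle plus the vertical-line bounds, and the essential role of the admissibility condition) is recoverable from your subharmonicity-plus-global-maximum-principle scheme. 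Your reduction of (2) to (1) and your verification of the Neumann condition $\partial_t(\rho\circ u)=0$ on $\R\times\{0,1\}$ (the paper's Lemma \ref{lem:boundary}) are fine as far as they go.
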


Finally, we state transversality results.

\begin{lem}\label{lem:transversality}

\begin{enumerate}
\item Let $H \in \mca{H}_\ad(\hat{M})$, and $K$ be a compact set in $\hat{M}$ which contains $M$.
Assume that 
$H \in \mca{H}_K(\hat{M})$ and images of all elements of $\mca{C}(H)$ are contained in $\interior K$.
Then, for generic $(J_t)_{t \in [0,1]}$, where $J_t \in \mca{J}_{a_H,K}(\hat{M})$, 
$\mca{M}_{H,(J_t)_t}(x_-,x_+)$ is a $\ind x_- - \ind x_+ -1$ dimensional smooth manifold for any
$x_-, x_+ \in \mca{C}(H)$. We denote the set of such $(J_t)_t$ by $\mca{J}_{H,K}(\hat{M})$, and
$\mca{J}_H(\hat{M}):=\bigcup_K \mca{J}_{H,K}(\hat{M})$, where $K$ runs over all compact sets in $\hat{M}$
with conditions as above.

\item Let $H, H' \in \mca{H}_\ad(\hat{M})$, $(H^s)_s$ be a monotone homotopy from $H$ to $H'$, and 
$K$ be a compact set in $\hat{M}$ which contains $M$. Assume that
$H^s \in \mca{H}_K(\hat{M})$ for any $s$, and images of all elements of $\mca{C}(H), \mca{C}(H')$ are contained in 
$\interior K$.
Then, for generic $(J^s_t)_{s \in \R, t \in [0,1]}$, where $J^s_t \in \mca{J}_{a_{H^s},K}(\hat{M})$, 
$\hat{\mca{M}}_{(H^s,J^s_t)_{s,t}}(x_{-}, x_{+})$ is a
$\ind x_- - \ind x_+$ dimensional smooth manifold for any $x_- \in \mca{C}(H), x_+ \in \mca{C}(H')$.
We denote the set of such $(J^s_t)_{s,t}$ by $\mca{J}_{(H^s)_s,K}(\hat{M})$, and
$\mca{J}_{(H^s)_s}(\hat{M}):=\bigcup_K \mca{J}_{(H^s)_s,K}(\hat{M})$, where $K$ runs over all compact sets 
in $\hat{M}$ with conditions as above.
\end{enumerate}
\end{lem}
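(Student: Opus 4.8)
This transversality lemma is a standard application of the Sard--Smale theorem, following the pattern established in Floer's original work and adapted to the Lagrangian/wrapped setting. I will sketch how I would prove both parts in parallel, since the arguments are essentially the same.

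\textbf{Setup.} The plan is to realize the moduli spaces as zero sets of a Fredholm section of a Banach bundle, and to show that for generic almost complex structures this section is transverse to the zero section. I would first fix $x_{-}, x_{+} \in \mca{C}(H)$ (resp.\ $x_{-} \in \mca{C}(H)$, $x_{+} \in \mca{C}(H')$) and consider the Banach manifold $\mca{B}^{1,p}_{\mathrm{loc}}$ of maps $u \colon \R \times [0,1] \to \hat{M}$ of class $W^{1,p}_{\mathrm{loc}}$ (for $p>2$, using exponential weights near $\pm\infty$) satisfying the Lagrangian boundary condition $u(\R \times \{0,1\}) \subset \hat{L}$ and the asymptotic conditions $u(s) \to x_{\pm}$. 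Over this one forms the Banach bundle whose fiber at $u$ is $L^p(u^*T\hat{M})$ (again with weights), and the Floer equation defines a section $\mca{F}(u,J) = \partial_s u - J_t(\partial_t u - X_H(u))$ (resp.\ with $H^s$, $J^s_t$). The linearization $D\mca{F}_u$ in the $u$-direction is a Fredholm operator of index $\ind x_{-} - \ind x_{+}$ (resp.\ $\ind x_{-} - \ind x_{+}$, where the homotopy case is the non-quotiented moduli space so there is no $-1$); this is where the index formula involving $\mu_\RS$ and the normalization $n/2$ enters, via the standard spectral-flow computation.

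\textbf{Transversality.} I would then enlarge the parameter space to the Banach manifold $\mca{J}^\ell_{a_H,K}(\hat{M})$ of $C^\ell$ (or $C^\varepsilon$, Floer's $C^\varepsilon$-space) almost complex structures of contact type on $\hat{M}\setminus K$ with respect to $a_H$, and show that the \emph{universal} section $(u,J)\mapsto \mca{F}(u,J)$ is transverse to the zero section. The key point is that the vertical differential in the $J$-direction is surjective onto a dense subspace: given a nonzero element $\eta \in L^q$ of the cokernel (orthogonal to the image of $D\mca{F}_u$ and to the image of the $J$-variation), one shows $\eta \equiv 0$. This uses (i) that somewhere injective points of $u$ exist — here I would invoke the standard argument that a Floer trajectory with $x_{-}\ne x_{+}$, or $x_{-}=x_{+}$ with $s$-dependent data, is not constant and hence has an open dense set of injective points where $\partial_s u \ne 0$; (ii) at such a point one can choose a compactly supported variation $Y$ of $J$ (within the allowed class — supported in $\interior K$, away from the cylindrical end, so the contact-type constraint is irrelevant there, which is exactly why the hypothesis ``images of all elements of $\mca{C}(H)$ are contained in $\interior K$'' is imposed) making $\langle Y(\partial_t u - X_H(u)), \eta\rangle$ nonzero, forcing $\eta$ to vanish on a dense set and hence everywhere by unique continuation. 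Then the Sard--Smale theorem gives a residual set of $J$ in the $C^\ell$-category for which $D\mca{F}_u$ is surjective for all $u$ in the moduli space, so the moduli space is a smooth manifold of the expected dimension. A Taubes-type argument upgrades this to a residual set in the $C^\infty$-category. In the homotopy case one must also note that the fixed pieces $J^s_t$ for $|s|\ge s_0$ are determined by $J^{\pm s_0}_t$, so the perturbation is only over $s \in (-s_0,s_0)$; this causes no difficulty because injective points with $|s|<s_0$ still exist by the same non-constancy argument.

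\textbf{Main obstacle.} I expect the genuine technical work to be twofold. First, setting up the correct weighted Sobolev spaces and verifying the Fredholm property and index of $D\mca{F}_u$ in the presence of a Lagrangian boundary condition and the noncompact cylindrical end — one must check that the asymptotic operators at $x_{\pm}$ are nondegenerate (which is exactly the nondegeneracy hypothesis built into $\mca{H}_{\ad}$) and that the contact-type behavior of $J$ on the end does not spoil Fredholmness; the $C^0$-estimate of Theorem \ref{thm:c0estimate} guarantees that all trajectories stay in a fixed compact set, so the analysis is effectively on a compact manifold with boundary plus a controlled collar, but this still needs care. Second, the somewhere-injectivity argument in the Lagrangian setting requires slightly more attention than in the closed case, since one must rule out that a trajectory is entirely contained in $\hat{L}$ or has its injective points only on the boundary $\R\times\{0,1\}$; the standard fix is that perturbations of $J$ supported near boundary-adjacent interior points still suffice, combined with the fact that a trajectory lying in $\hat{L}$ would be constant. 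Everything else is routine adaptation of Floer--Hofer--Salamon transversality theory, and I would largely cite the literature (e.g.\ \cite{AbouzaidSeidel} and references therein) rather than reproduce it.
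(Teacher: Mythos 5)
Your proposal is correct and follows essentially the same route as the paper: the paper's proof consists precisely of the observation that every trajectory meets $\interior K$ (because the asymptotic chords lie there), so perturbations of $J$ supported in $\interior K$ — where the contact-type constraint is vacuous — suffice, followed by a citation of Floer--Hofer--Salamon for the standard Sard--Smale/somewhere-injectivity machinery. You have simply spelled out the standard machinery that the paper delegates to the reference.
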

\begin{proof}
First we prove (1).
Let $(J_t)_t$ be a family of elements of $\mca{J}_{a_H,K}(\hat{M})$.
Then, for any $x_-, x_+ \in \mca{C}(H)$ and $u \in \hat{\mca{M}}_{H,(J_t)_t}(x_-,x_+)$, 
$u^{-1}(\interior K)$ is a non-empty open set in $\R \times [0,1]$, since both
$x_-\bigl([0,1]\bigr)$ and $x_+\bigl([0,1]\bigr)$ are contained in $\interior K$.
By standard arguments (see \cite{FloerHoferSalamon}), one can perturb $(J_t)_t$ to achieve transversality conditions 
without violating the condition $J_t \in \mca{J}_{a_H,K}(\hat{M})$.
This proves (1). (2) is proved by similar arguments.
\end{proof}

\subsection{Wrapped Floer homology}
In this subsection, we define wrapped Floer homology for Liouville quadruples.
Once $C^0$ estimate for Floer trajectories is established (Theorem \ref{thm:c0estimate}),
other arguments are parallel to Lagrangian Floer theory for compact symplectic manifolds
(\cite{Floer}). 

Let $H \in \mca{H}_{\ad}(\hat{M})$, and $k$ be an integer.
Let 
\[
\mca{C}_k(H):=\bigl\{ x \in \mca{C}(H) \bigm{|} \ind x=k \bigr\},
\]
and $\WFC_k(H)$ be the free $\Z_2$ module generated over $\mca{C}_k(H)$.

Let $(J_t)_t \in \mca{J}_H(\hat{M})$. For each integer $k$, define $\partial_k^{H,(J_t)_t} \colon \WFC_k(H) \to \WFC_{k-1}(H)$ by
\[
\partial_k^{H,(J_t)_t}[x] := \sum_{y \in \mca{C}_{k-1}(H)} \sharp \mca{M}_{H,(J_t)_t} (x,y) \cdot [y].
\]

Then, 
$\bigl(\WFC_*(H),\partial_*^{H,(J_t)_t} \bigr)$ is a chain complex,
and the resulting homology group does not depend on choice of $(J_t)_t$.
We denote this homology group by $\WFH_*(H;M,\omega,X,L)$. We often abbreviate it as
$\WFH_*(H)$.

Let $H, H' \in \mca{H}_{\ad}(\hat{M})$, and $(H^s)_s$ be a monotone homotopy from $H$ to $H'$, 
and $(J^s_t)_{s,t} \in \mca{J}_{(H^s)_s}(\hat{M})$.
For each integer $k$, define $\varphi_k^{(H^s,J^s_t)_{s,t}} \colon \WFC_k(H) \to \WFC_k(H')$ by
\[
\varphi_k^{(H^s,J^s_t)_{s,t}} [x]:= \sum_{y \in \mca{C}_k(H')} \sharp \hat{\mca{M}}_{(H^s,J^s_t)_{s,t}} (x,y) \cdot [y].
\]
$\Bigl(\varphi_k^{(H^s,J^s_t)_{s,t}}\Bigr)_k$ is a chain map, hence we can define a morphism 
$\WFH_*(H) \to \WFH_*(H')$.

Let $H, H' \in \mca{H}_{\ad}(\hat{M})$.
If $a_H(z)<a_{H'}(z)$ for any $z \in \partial M$, 
then there exists a monotone homotopy $(H^s)_s$ from $H$ to $H'$, 
and morphism $\WFH_*(H) \to \WFH_*(H')$ obtained as above does not depend on choices of 
$(H^s,J^s_t)_{s,t}$.
We call this morphism \textit{monotone morphism}.

Finally, we define the wrapped Floer homology of $(M,\omega,X,L)$ by taking direct limit 
\[
\WFH_*(M,\omega,X,L):=
\varinjlim_{a_H \to \infty} \WFH_*(H).
\]

One of the important properties of wrapped Floer homology is its invariance under 
deformations. The next proposition is proved in section 3.5.

\begin{prop}\label{prop:isotopyinvarianceofWFH}
Let $(M,\omega^s,X^s,L)_{0 \le s \le 1}$ be a smooth family of Liouville quadruple.
Then there exists a canonical isomorphism $\WFH_*(M,\omega^0,X^0,L) \to \WFH_*(M,\omega^1,X^1,L)$.
\end{prop}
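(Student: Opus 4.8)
The plan is to reduce everything to the homotopy invariance that is already built into Floer theory, by viewing a smooth family $(M,\omega^s,X^s,L)$ as producing, at the level of completions, a smooth family of Liouville structures with a common underlying manifold $\hat M$ and a common Lagrangian $\hat L$. First I would observe that the contact boundaries $(\partial M,\lambda^s)$ with $\lambda^s:=i_{X^s}\omega^s$ vary smoothly, that $\partial L$ stays Legendrian throughout, and that the action spectra $\mca{A}(\partial M,\lambda^s,\partial L)$ vary continuously; this lets me choose, for any fixed admissible Hamiltonian slope, a family of admissible Hamiltonians $H^s\in\mca{H}_{\ad}(\hat M,\omega^s)$ and interpolating data that depend smoothly on $s$. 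The key geometric input is that the completions $(\hat M,\hat\omega^s)$ are all deformation equivalent rel $\partial$: one can write the deformation of $\omega$ on the cylindrical end as the obvious linear interpolation $\rho\lambda^s$, and because $X^s$ points outward and $L_{X^s}\omega^s=\omega^s$, Moser's argument produces a smooth family of diffeomorphisms $\psi^s:\hat M\to\hat M$ with $\psi^0=\id$, $(\psi^s)^*\hat\omega^s=\hat\omega^0$, $\psi^s(\hat L)=\hat L$, and $\psi^s$ equal to a (fiberwise linear in $\rho$) cylindrical map near infinity. Pulling back all the Floer data by $\psi^s$ reduces the family of Liouville quadruples to a fixed symplectic manifold $(\hat M,\hat\omega^0,\hat L)$ with a smoothly varying family of admissible Hamiltonians and almost complex structures.

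Next I would run the standard continuation argument in $s$. For a fixed cofinal sequence of slopes, and for each $s$, the complex $\WFC_*(H^s)$ computes $\WFH_*(M,\omega^s,X^s,L)$ after passing to the direct limit; a path in the parameter $s$ from $0$ to $1$ gives, for each fixed slope, a monotone-type homotopy (constant slope, varying only the symplectic form and the interpolation), and the associated moduli spaces $\hat{\mca M}$ are cut out transversally for generic choices by Lemma \ref{lem:transversality}. The $C^0$ estimate of Theorem \ref{thm:c0estimate} applies verbatim once the data are of contact type at infinity with respect to the relevant $a_{H^s}$, which is exactly what the Moser reduction arranges, so the moduli spaces of index $0$ and $1$ are compact. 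This yields a continuation chain map, and the usual catenation/homotopy argument shows that the composite of the chain maps for the subdivided path $0\to s\to 1$ is chain homotopic to the chain map for $0\to 1$, and that reversing the path gives a homotopy inverse; hence the induced map on homology is an isomorphism for each slope, compatible with the monotone morphisms, and therefore descends to a canonical isomorphism $\WFH_*(M,\omega^0,X^0,L)\to\WFH_*(M,\omega^1,X^1,L)$ in the direct limit.

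Canonicity then follows from the usual three-fold check: independence of the isomorphism from the choice of interpolating family $(H^s,J^s)$ (two choices are connected by a two-parameter homotopy, giving a chain homotopy between the two continuation maps), compatibility with composition of deformations (catenating a family $[0,1]$ with a family $[1,2]$ produces a chain map homotopic to the composite), and compatibility with the monotone morphisms in the direct system (a square of continuation maps commutes up to homotopy). All of these are standard once the analytic package — transversality and the $C^0$ estimate — is in place, and the only point requiring genuine care is making every choice in the Moser reduction and the subsequent Floer data depend \emph{smoothly} on the parameter $s$, uniformly enough that the compactness from Theorem \ref{thm:c0estimate} holds with an $s$-independent compact set $B$.

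The main obstacle, and the step I would spend the most effort on, is precisely this uniformity of the $C^0$ estimate across the family: Theorem \ref{thm:c0estimate} is stated for a single monotone homotopy with a single compact $K$, so I need to verify that, after the Moser reduction, the family of Hamiltonians $H^s$ and almost complex structures $J^s_t$ can be chosen to lie in $\mca{H}_K(\hat M)$ and $\mca{J}_{a_{H^s},K}(\hat M)$ for one compact $K$ independent of $s\in[0,1]$, and likewise for the two-parameter homotopies used in the canonicity arguments. This is where a little genuine work is needed — controlling how the contact forms $\lambda^s$ (and hence the functions $a_{H^s}$ and the constants $c_{J^s}$) degenerate or not as $s$ varies — but since $[0,1]$ is compact and everything varies smoothly, a compactness-in-$s$ argument closes the gap. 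Everything else is a routine transcription of the compact-case Lagrangian Floer homotopy invariance of \cite{Floer}.
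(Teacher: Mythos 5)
Your overall strategy --- trivialize the family of completions by a Moser/Liouville-flow argument and then run a standard continuation in $s$ --- is a legitimate alternative to the paper's approach, which instead works directly with the varying forms $\hat{\omega}^s$ on the completion and re-proves the $C^0$ estimate for the $s$-dependent Floer equation (lemma \ref{lem:deformation}). But there is a genuine gap at the step where you claim the continuation can be run at ``constant slope'' and that theorem \ref{thm:c0estimate} then ``applies verbatim.'' The deformation of the Liouville structure enters the energy identity: in the paper's picture one finds
\[
-\partial_s\mca{A}_{H^s}\bigl(u(s)\bigr)=\int_0^1 \big\lvert\partial_s u\big\rvert^2_{J^s_t}+\partial_s H^s(u)-\frac{\partial \hat{\lambda^s}}{\partial s}\bigl(\partial_t u\bigr)\,dt,
\]
and the last term grows linearly in $\rho$ on the cylindrical end with no definite sign. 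Equivalently, in your Moser-reduced picture the pulled-back Hamiltonians satisfy $\partial_s\bigl(H^s\circ\psi^s\bigr)=\partial_sH^s\circ\psi^s+dH^s(\partial_s\psi^s)$, and the second summand is again of order $a(s)\rho$ with no sign, so a constant-slope family is \emph{not} a monotone homotopy in the sense required by theorem \ref{thm:c0estimate}(2), and its moduli spaces carry no a priori $C^0$ bound.

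Your proposed remedy --- ``since $[0,1]$ is compact and everything varies smoothly, a compactness-in-$s$ argument closes the gap'' --- does not address this, because the problematic term is unbounded on the noncompact completion, not merely varying over the compact parameter interval. What is actually needed, and what the paper proves in lemmas \ref{lem:deformation} and \ref{lem:sequenceagain}, is that the continuation map from $H$ to $H'$ exists only when the slopes satisfy $c_0a(s)+c_1\le a'(s)$ for constants $c_0,c_1$ depending on $(\omega^s,X^s)_s$ and $(J^s_t)_{s,t}$: the positive term $\partial_sH^s\sim a'(s)\rho$ is used to absorb the error term of size roughly $\bigl(c_5a(s)+c_6\bigr)\rho$. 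One must then check that such fast-growing homotopies still connect cofinal sequences so the direct limits can be compared, and build the inverse map the same way. This slope condition is the heart of the proof and is absent from your argument; once you insert it (either in the paper's picture, or after your Moser reduction by choosing $H^s$ so that $H^s\circ\psi^s$ is genuinely monotone), the rest of your outline --- catenation, two-parameter homotopies, compatibility with the monotone morphisms --- goes through as you describe.
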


If $(M,\omega,X,L)$ and $(M,\omega,X',L)$ are Liouville quadruples, then 
$(M,\omega,sX+(1-s)X',L)_{0 \le s \le 1}$ is a smooth family of Liouville quadruples.
Hence, by Proposition \ref{prop:isotopyinvarianceofWFH},
$\WFH_*(M,\omega,X,L)$ does not depend on $X$. We often denote it by $\WFH_*(M,\omega,L)$.

Next corollary is easily obtained from Proposition \ref{prop:isotopyinvarianceofWFH}.

\begin{cor}\label{cor:isotopyinvarianceofWFH}
Let $(M, \omega, X, L)$ be a Liouville quadruple, and $M'$ be a compact submanifold of $\interior M$,
such that $(M',\omega|_{M'}, X|_{M'}, L \cap M')$ is also a Liouville quadruple.
Assume that there exists $H \in C^\infty(M)$ such that $dH(X)>0$ on $M \setminus \interior M'$.
Then $\WFH_*(M,\omega,L) \cong \WFH_*(M',\omega|_{M'},L \cap M')$.
\end{cor}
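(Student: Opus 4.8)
The plan is to reduce the statement for $(M,\omega,X,L)$ to the one for $(M',\omega|_{M'},X|_{M'},L\cap M')$ by exhibiting both invariants as direct limits over \emph{compatible} cofinal families of admissible Hamiltonians, using the given function $H$ with $dH(X)>0$ on $M\setminus\interior M'$ to "interpolate" between the cylindrical ends. First I would observe that since $dH(X)>0$ on the compact collar $M\setminus\interior M'$, the flow of $X$ gives a diffeomorphism $\partial M'\times[1,1+\epsilon]\xrightarrow{\sim} M\setminus\interior M'$ for a suitable reparametrisation (after possibly shrinking), under which $\omega$ pulls back to a Liouville form of the standard cylindrical type; concretely, $(\hat M,\hat\omega,\hat X,\hat L)$ and $(\hat{M'},\widehat{\omega|_{M'}},\widehat{X|_{M'}},\widehat{L\cap M'})$ have canonically identified completions, because the completion only depends on $(M',\omega|_{M'},X|_{M'})$ together with the germ of the Liouville vector field near $\partial M'$, and the region $M\setminus\interior M'$ is, up to the flow of $X$, just a piece of the cylindrical end of $\widehat{M'}$. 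So after this identification the two objects $\mca{H}_{\ad}(\hat M)$ and $\mca{H}_{\ad}(\widehat{M'})$ literally coincide, as do the moduli spaces, and the two direct limits are taken over the \emph{same} directed system. Hence the isomorphism is the identity, once the identification of completions is set up carefully.

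The key steps, in order, would be: (i) use $dH(X)>0$ on $M\setminus\interior M'$ together with $X$ pointing strictly outward on $\partial M$ and $X|_{\partial M'}$ pointing strictly outward on $\partial M'$ to conclude that the time-$t$ flow $\psi_X^t$ of $\hat X$ embeds $\partial M'\times(0,\infty)$ into $\hat{M'}$ with image containing $M\setminus\interior M'$, exactly as in the construction of $\Phi$ and $\Cyl(\widehat{M'})$ in Section~2.1; (ii) check that this identifies $\hat M$ with $\widehat{M'}$ as completed Liouville quadruples (matching $\hat\omega$, $\hat X$, $\hat L$, and the contact boundary data $(\partial M',\lambda^{M'})$ up to the rescaling by $a\mapsto a/a'$ constant, which by the remark after Definition~2.1.5 does not change the relevant classes of almost complex structures); (iii) conclude that $\mca{H}_{\ad}(\hat M)=\mca{H}_{\ad}(\widehat{M'})$, that the index, action, Floer equations, $C^0$ estimates (Theorem~\ref{thm:c0estimate}) and transversality (Lemma~\ref{lem:transversality}) are unchanged, so $\WFH_*(H;M,\omega,X,L)=\WFH_*(H;M',\omega|_{M'},X|_{M'},L\cap M')$ for every $H$, compatibly with monotone morphisms; (iv) pass to the direct limit. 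Alternatively, if one prefers not to argue by a literal identification, one can invoke Proposition~\ref{prop:isotopyinvarianceofWFH} after a preliminary deformation of $X$ on $M\setminus\interior M'$ to make it agree with the standard radial field $\rho\partial_\rho$ in the collar coordinates furnished by $H$; the hypothesis $dH(X)>0$ is precisely what guarantees such a deformation can be done through Liouville quadruples, keeping $X$ outward-pointing on $\partial M$ and tangent to $L$.

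The main obstacle I expect is step~(ii): verifying that the flow of $X$ really does produce a \emph{collar} identification compatible with all the structure, rather than merely a diffeomorphism of the underlying manifolds. One must be careful that $X$ is genuinely transverse to each hypersurface $\psi_X^t(\partial M')$ throughout $M\setminus\interior M'$ — this is where $dH(X)>0$ is used, since $H$ is then strictly increasing along $X$-trajectories and so its sublevel hypersurfaces (or more precisely the translates of $\partial M'$ under the flow) foliate the collar without any trajectory of $X$ ever becoming tangent to them or returning — and that under this identification $\lambda=i_X\omega$ becomes $\rho$ times the contact form on $\partial M'$, so that the cylindrical-end structure used to define $\mca{H}_K$, $\mca{J}_{a,K}$, etc., matches on the nose. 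Once the collar identification is in place, everything else is bookkeeping: the defining data of $\WFH_*$ depend only on the completion, and the two completions have been identified, so $\WFH_*(M,\omega,L)\cong\WFH_*(M',\omega|_{M'},L\cap M')$.
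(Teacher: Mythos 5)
Your argument is correct, but it takes a genuinely different route from the paper's own (very short) proof. The paper observes, exactly as you do, that $dH(X)>0$ on the compact set $M\setminus\interior M'$ forces every $X$-trajectory to enter through $\partial M'$ and leave through $\partial M$; it then produces a smooth family of Liouville \emph{subdomains} $(M_t,\omega|_{M_t},X|_{M_t},L\cap M_t)$ interpolating between $M'$ and $M$, and invokes Proposition~\ref{prop:isotopyinvarianceofWFH}. You instead identify the two completions $\hat M$ and $\widehat{M'}$ outright (using the $\Phi$-construction from Section~2.1) and argue that $\mca{H}_{\ad}$, the classes $\mca{J}_a$, the indices, actions, $C^0$ estimates and moduli spaces all match under this identification, so the two direct limits coincide term by term.

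The comparison is instructive. The paper's proof is shorter, but it leans on Proposition~\ref{prop:isotopyinvarianceofWFH} in a form that is not literally what is stated there: that proposition fixes the underlying pair $(M,L)$ and varies only $(\omega^s,X^s)$, whereas the family $(M_t)_t$ has varying domains. To apply the proposition one must first identify the $M_t$ with a fixed reference, and the natural identification (flow of $X$) does not pull $\omega$ back to $\omega$ — it rescales it by a non-constant conformal factor, which is not even closed. So the paper's appeal to Proposition~\ref{prop:isotopyinvarianceofWFH} is implicitly relying on exactly the observation you make explicit: that what really matters is the completion, and all the $\hat{M_t}$ are canonically the same Liouville quadruple. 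Your proof makes this the main point rather than a hidden step, shows that $\WFH_*(M)$ and $\WFH_*(M')$ are in fact \emph{equal} (not merely isomorphic via a continuation map), and avoids the deformation-invariance machinery altogether; the price you pay is some bookkeeping in step~(iii), most of which you correctly flag (the change of cylindrical coordinate $\rho\mapsto c(z')\rho'$ intertwines the $a$-dependent classes $\mca{J}_{a,K}$ bijectively even when $c$ is non-constant — it is not the remark about constant $a/a'$ that is needed, but rather the fact that $\Phi_a$ was built precisely to absorb such non-constant rescalings). Your alternative route via deforming $X$ to a radial model is not what the paper does either, but it would also work. Minor quibble: the opening sentence of step~(i) speaks of a diffeomorphism $\partial M'\times[1,1+\epsilon]\to M\setminus\interior M'$, which is not correct as stated since the flow time from $\partial M'$ to $\partial M$ is not uniform; but you immediately correct this yourself by replacing it with the embedding of $\partial M'\times(0,\infty)$ containing the collar, which is the right statement.
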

\begin{proof}
For any $x \in M \setminus M'$, an integral curve of $X$ through $x$ starts from $\partial M'$ and ends at $\partial M$.
This is because $\inf_{M \setminus \interior M'}dH(X)>0$.
Thus there exists a family $(M_t)_{0 \le t \le 1}$ of submanifolds of $M$ such that
$(M_t,\omega|_{M_t},X|_{M_t},L \cap M_t)_{0 \le t \le 1}$ is a smooth family of Liouville quadruples
and $M_0=M'$, $M_1=M$.
Now claim follows from Proposition \ref{prop:isotopyinvarianceofWFH}.
\end{proof}

We show an example of calculation of wrapped Floer homology.
Consider ${\R}^{2n}$ with coordinate $(q_1,\ldots,q_n,p_1,\ldots,p_n)$, and the standard symplectic form
$\omega_{\st}=\sum_{1 \le i \le n} dp_i \wedge dq_i$.
Let $D^{2n}:=\bigl\{(q,p)\bigm{|} |q|^2+|p|^2 \le 1 \bigr\},\,X:=\frac{1}{2}\sum_{1 \le i \le n} q_i \partial_{q_i} + p_i \partial_{p_i}$.
Then, 
$\bigl(D^{2n}, \omega_\st, X, D^{2n}\cap\{p=0\}\bigr)$ is a Liouville quadruple.

\begin{prop}\label{prop:WFHofdisk}
$\WFH_*\bigl(D^{2n},\omega_\st,D^{2n}\cap\{p=0\}\bigr)=0$.
\end{prop}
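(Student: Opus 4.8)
The plan is to compute $\WFH_*$ of the disk $\bigl(D^{2n},\omega_\st, D^{2n}\cap\{p=0\}\bigr)$ directly from the definition, by choosing an explicit admissible Hamiltonian and understanding the direct limit. First I would identify the Reeb dynamics on the boundary: $\partial D^{2n} = S^{2n-1}$ with the contact form $\lambda = i_X\omega_\st$ is the standard sphere, and $\partial L = S^{2n-1}\cap\{p=0\} = S^{n-1}$ is the standard Legendrian. The key geometric fact is that every Reeb chord of this Legendrian has action bounded below by a positive constant $c_0$ (the shortest chord length, essentially $\pi$ for the round metric after normalization), and more importantly, the Reeb chords come in a discrete action spectrum $\mca{A}(S^{2n-1},\lambda,S^{n-1}) \subset c_0\cdot\Z_{>0}$ or similar. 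Then for any admissible $H$ whose slope $a_H$ at infinity is small enough that $1 \notin \mca{A}(\partial D^{2n}, a_H^{-1}\lambda, \partial L)$ and in particular $a_H < c_0$, the set $\mca{C}(H)$ consists only of the "constant-type" chords near the interior, i.e. intersection points of $\hat L$ with $\phi^1_H(\hat L)$ coming from critical points of $H|_L$-type data rather than from wrapping around Reeb chords.

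Concretely, I would take a small Hamiltonian $H$ that on $M = D^{2n}$ is a $C^2$-small Morse function (perturbing to achieve nondegeneracy) and of contact type with small slope outside a slightly larger compact set. For such $H$, standard arguments (as in the comparison between Lagrangian Floer homology for small Hamiltonians and Morse homology of the Lagrangian, cf. \cite{Floer}) identify $\WFC_*(H)$ with the Morse complex of $H|_L$ where $L = D^{2n}\cap\{p=0\} = D^n$ is an $n$-disk with Legendrian boundary. Taking $H|_L$ to be a Morse function on $D^n$ with a single minimum and no other critical points (possible since $D^n$ is contractible and we may choose the function to have its gradient pointing outward on $\partial D^n$, so boundary critical points do not contribute), we get $\WFH_*(H) = H_*(D^n;\Z_2) = \Z_2$ concentrated in one degree. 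The index computation via $\mu_\RS$ should place this generator in degree $0$ (after the shift by $n/2$), consistent with the normalization.

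The remaining and genuinely essential step is to show that the direct limit kills this class: the monotone morphism $\WFH_*(H) \to \WFH_*(H')$ is zero for $H'$ with sufficiently large slope. This is where the "wrapping" does its work. As the slope $a_{H'}$ crosses successive values in the action spectrum $\mca{A}(S^{2n-1},\lambda, S^{n-1})$, new generators (pairs of chords, coming from the Reeb chords of $S^{n-1}$) enter $\WFC_*(H')$, and by Lemma \ref{lem:energyformula} the continuation map decreases action. The point is that the unit class in $\WFH_0(H)$, under continuation, becomes a boundary once the target has slope large enough to "see" the first Reeb chord — one shows the generator of $\WFH_0(H)$ maps to something exact, because in the target complex the minimum of $H'|_L$ is connected by a Floer trajectory to a chord-pair generator of index $1$. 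I expect this to be the main obstacle: one must either compute the relevant continuation map explicitly (using the very symmetric geometry of the round sphere and the $S^{n-1}$ Legendrian, where everything is essentially linear-algebraic) or, alternatively, use a formal argument — e.g. that $\WFH_*$ of the disk receives a unital ring structure and the unit, having positive action that is pushed to $-\infty$ in the limit by Lemma \ref{lem:energyformula}, must vanish. The cleanest route is probably to exhibit, for a cofinal sequence $H_1 < H_2 < \cdots$ with slopes tending to infinity, that each composite $\WFH_0(H_i)\to\WFH_0(H_{i+1})$ is zero, reducing everything to a single explicit trajectory count on the round sphere; I would carry out that count using the $SO(n)\times SO(n)$-symmetry to reduce to an ODE in one variable.
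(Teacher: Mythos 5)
Your outline identifies the right starting point but leaves the entire content of the proof — showing that the class in $\WFH_*(H)$ for small $H$ is killed in the direct limit — as an explicitly unproven obstacle, with only two speculative routes (a direct continuation-map count using round-sphere symmetry, or a unital ring structure argument). Neither is carried out, and neither is as available as you hope: the paper never introduces a product on $\WFH_*$, and a trajectory count for the continuation map would itself be a substantial computation. There is also a minor degree confusion in your intermediate step: with the Liouville field pointing outward along $\partial L$, the small-Hamiltonian homology is $H_*(L,\partial L)\cong\Z_2$ concentrated in degree $n$ (cf.\ theorem \ref{thm:WFH-}), not $H_*(D^n)$ in degree $0$; this does not affect the logic but indicates some looseness.

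The paper sidesteps the continuation-map problem entirely by never using a small Hamiltonian. It takes the cofinal family of quadratic Hamiltonians $H_n(q,p)=n+a_n(|q|^2+|p|^2)$ on $\hat{D^{2n}}\cong\R^{2n}$, with $a_n\to\infty$ avoiding the action spectrum of $\partial L=S^{n-1}$. For each $H_n$ there is exactly one Hamiltonian chord with boundary on $\hat L=\{p=0\}$, the constant at the origin, and — this is the key fact your approach misses — its Robbin--Salamon index tends to $+\infty$ as $a_n\to\infty$, since the linearized time-one flow is a rotation by angle proportional to $a_n$ and winds the reference Lagrangian $\{p=0\}$ further and further through the Lagrangian Grassmannian. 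Hence for every fixed degree $k$ one has $\WFC_k(H_n)=0$ once $n$ is large, so $\WFH_k(H_n)=0$ eventually and the direct limit vanishes degree by degree, with no continuation maps to compute. Your approach could in principle be completed, but it is strictly harder than what is needed; the proposition is really a statement about index growth for a single constant chord, not about a continuation map killing a Morse class.
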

\begin{proof}
Let $\lambda:=i_X\omega_\st$.
Take $(a_n)_n$, an increasing sequence of positive numbers such that $\lim_{n \to \infty} a_n=\infty$ and
$a_n \notin \mca{A}(\partial D^{2n}, \lambda, \partial D^{2n}\cap\{p=0\}\bigr)$ for each $n$.

We identify $\hat{D^{2n}}$ with ${\R}^{2n}$ using a flow generated by $X$, 
and define $H_n \in \mca{H}_{\ad}(\hat{D^{2n}})$ by 
$H_n(p,q)=a_n\bigl(|p|^2+|q|^2)$.
Since $\lim_{n \to \infty}a_n=\infty$, 
\[
\WFH_*\bigl(D^{2n},\omega_\st,D^{2n}\cap\{p=0\}\bigr)=\lim_{n \to \infty} \WFH_*(H_n).
\]
The only element of $\mca{C}(H_n)$ is the constant map to $(0,\ldots,0)$, and its index goes to $\infty$
as $n \to \infty$.
Therefore, for any $k$, $\WFH_k(H_n)=0$ for sufficiently large $n$.
This completes the proof.
\end{proof}

We conclude this section with a remark on relation between wrapped Floer homology and 
Reeb chords. The following theorem can be proved by reduction to the finite dimensional Morse theory.

\begin{thm}\label{thm:WFH-}
Let $(M, \omega, X, L)$ be a Liouville quadruple. 
If $\mca{C}(\partial M, \lambda, \partial L)=\emptyset$, then
$\WFH_*(M,\omega,X,L) \cong H_*(L,\partial L)$.
\end{thm}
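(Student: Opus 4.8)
The plan is to choose a cofinal family of Hamiltonians whose chords and Floer trajectories all live inside the region where $H$ is $C^2$-small, so that the wrapped Floer complex reduces to a Morse-theoretic computation on $(L,\partial L)$. First I would take $H_0 \in \mca{H}_{\ad}(\hat M)$ with $a_{H_0}$ a sufficiently small constant $\varepsilon>0$ (small relative to $\inf \mca{A}(\partial M,\lambda,\partial L)$, which is positive and, by hypothesis $\mca{C}(\partial M,\lambda,\partial L)=\emptyset$, can even be taken $+\infty$) and with $H_0|_M$ a $C^2$-small Morse function all of whose critical points lie in $\interior M$, chosen so that $-\nabla H_0$ is tangent to $L$ along $L$ and points inward along $\partial L$ inside $L$ — i.e.\ $H_0|_L$ is a Morse function on $L$ with $\partial L$ as a (say) maximum-type boundary, realising $H_*(L,\partial L)$. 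Since $a_{H_0}\lambda$ admits no Reeb chords of length $\le 1$, any $x \in \mca{C}(H_0)$ with nonconstant image in the cylindrical end is excluded once $\varepsilon$ is small, and by the $C^0$ estimate (theorem \ref{thm:c0estimate}) all Floer trajectories stay in a fixed compact set; a standard rescaling argument (as in \cite{Floer}) then identifies, for generic small $H_0$ and adapted $J_t$, the elements of $\mca{C}(H_0)$ with critical points of $H_0|_L$ and the index $\ind x$ with the Morse index (the shift $n/2 + \mu_\RS$ is designed so that a constant chord at a critical point of Morse index $\mu$ has $\ind x = \mu$), and the Floer differential with the Morse differential of $H_0|_L$ relative to $\partial L$. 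This gives $\WFH_*(H_0) \cong H_*(L,\partial L)$.

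Next I would show $H_0$ is already cofinal in $\bigl(\mca{H}_{\ad}(\hat M),<\bigr)$, or rather that the monotone morphisms out of $H_0$ into a cofinal family are all isomorphisms. The point is that the only way new chords appear as one increases $a_H$ is through Reeb chords of $\partial L$ whose action crosses the value $1$ after rescaling by $a_H^{-1}$; since $\mca{C}(\partial M,\lambda,\partial L)=\emptyset$, no such chords exist, so for any $H,H' \in \mca{H}_{\ad}(\hat M)$ with $H<H'$ the chord sets are "the same" up to the interior Morse data, and the continuation map is a chain-level isomorphism (one can even arrange a cofinal sequence $H_1<H_2<\cdots$ with $a_{H_n}\to\infty$ but $H_n|_M$ a fixed small Morse function, so that each $\WFC_*(H_n)$ is literally the Morse complex of $H_0|_L$ rel $\partial L$ and each continuation map is the identity). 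Taking the direct limit then yields $\WFH_*(M,\omega,X,L) = \varinjlim_n \WFH_*(H_n) \cong H_*(L,\partial L)$.

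The main obstacle I anticipate is the reduction-to-Morse-theory step in the cylindrical end: because the admissible Hamiltonians here are genuinely of the form $a_H\rho + b_H$ with $a_H$ a (possibly nonconstant) positive function on $\partial M$, rather than globally $C^2$-small, one must be careful that no chords or broken trajectories escape into the end. This is exactly where the hypothesis $\mca{C}(\partial M,\lambda,\partial L)=\emptyset$ does the work — it forces $\mca{A}(\partial M, a_H^{-1}\lambda,\partial L)=\emptyset$ for every $H$, so the admissibility condition $1\notin\mca{A}(\partial M,a_H^{-1}\lambda,\partial L)$ is automatic and, more importantly, there are simply no nonconstant chords in the end to begin with. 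The remaining care is the standard but slightly delicate compactness/regularity package (using theorem \ref{thm:c0estimate} and lemma \ref{lem:transversality}), plus checking the index normalisation $\ind x = \frac n2 + \mu_\RS(\Lambda,\{p=0\})$ matches the Morse index for a constant chord at an interior critical point of $H_0|_L$ — a computation of $\mu_\RS$ for the linearised flow of a small autonomous Hamiltonian, which is routine but must be done to pin down the grading.
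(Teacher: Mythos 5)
Your proposal follows exactly the approach the paper alludes to (``reduction to the finite dimensional Morse theory''): use the absence of Reeb chords to see that, for every slope, the admissible Hamiltonians have only the constant interior chords, take a cofinal family that is $C^2$-small Morse on $M$, identify the Floer complex with a Morse complex for $\pm H_0|_L$ relative to $\partial L$, and show the continuation maps stabilize. The paper supplies no further details, and your sketch --- together with the two points you correctly flag as needing a calculation, namely pinning down the Robbin--Salamon normalization for constant chords and the direction of $\nabla H_0|_L$ at $\partial L$ (the Floer differential decreases action $-H(p)$, so it matches the Morse differential of $-H_0|_L$, for which the ``max-type'' choice for $H_0|_L$ indeed yields $H_*(L,\partial L)$) --- is essentially the proof one would write.
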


As a corollary, we get:

\begin{cor}\label{cor:WFH-}
Let $(M, \omega, X, L)$ be a Liouville quadruple.
If $\WFH_*(M, \omega, X, L)=0$, then
$\mca{C}(\partial M, \lambda, \partial L) \ne \emptyset$.
\end{cor}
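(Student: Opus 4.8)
The plan is to obtain this as a direct contrapositive of Theorem \ref{thm:WFH-}. So I assume $\mca{C}(\partial M, \lambda, \partial L) = \emptyset$ and aim to show $\WFH_*(M,\omega,X,L) \ne 0$, which is exactly the negation of the hypothesis of the corollary.

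First I invoke Theorem \ref{thm:WFH-}: under the assumption $\mca{C}(\partial M, \lambda, \partial L) = \emptyset$ it provides a (canonical) isomorphism $\WFH_*(M,\omega,X,L) \cong H_*(L,\partial L)$, where the right-hand side is singular homology with $\Z_2$ coefficients, matching the coefficient ring used throughout in the definition of $\WFC_*$. Next I observe that this target homology is nonzero. Indeed, $M$ is compact and, by condition (4) in the definition of Liouville quadruple, $L$ is transverse to $\partial M$ with $\partial L = L \cap \partial M$; hence $L$ is a compact smooth $n$-manifold with boundary, and it is nonempty (already implicit in the standing hypothesis $\pi_1(M,L) = \pi_2(M,L) = 0$). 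Therefore, by Poincar\'e--Lefschetz duality over $\Z_2$, $H_n(L,\partial L;\Z_2) \cong H^0(L;\Z_2) \ne 0$ — equivalently, the mod $2$ fundamental class of $(L,\partial L)$ is nonzero; and if $\partial L = \emptyset$ the same conclusion holds via the mod $2$ fundamental class of the closed manifold $L$. Combining the isomorphism with this nonvanishing gives $\WFH_*(M,\omega,X,L) \ne 0$, proving the contrapositive.

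There is essentially no analytic obstacle here: all the real content is packaged in Theorem \ref{thm:WFH-}. The only point that requires (mild) care is the nonvanishing of $H_*(L,\partial L;\Z_2)$, for which one must use that $L$ is genuinely compact — forced by compactness of $M$ together with transversality of $L$ to $\partial M$ — and nonempty; once that is noted, the statement is a standard fact about topological manifolds with boundary.
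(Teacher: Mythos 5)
Your proposal is correct and matches the paper's (implicit) argument: the corollary is simply the contrapositive of Theorem \ref{thm:WFH-} together with the nonvanishing of $H_*(L,\partial L;\Z_2)$ for a nonempty compact manifold with boundary, which is exactly what you supply. The extra care you take in justifying $H_*(L,\partial L;\Z_2)\ne 0$ via Poincar\'e--Lefschetz duality is a welcome detail the paper leaves unstated.
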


\begin{rem}\label{rem:WFH-}
The Reeb vector field $R$ on $(\partial M, \lambda)$ depends on $\lambda$, but the characteristic foliation
$\R R$ on $\partial M$ depends only on $\omega$. 
Since the characteristic foliation determines Reeb chords up to reparametrizations, the following assertion makes sense:
if $\WFH_*(M,\omega,L)=0$, then $\mca{C}(\partial M, \partial L) \ne \emptyset$.
\end{rem}

\section{A $C^0$ estimate}

The goal of this section is to prove Theorem \ref{thm:c0estimate} and Proposition \ref{prop:isotopyinvarianceofWFH}.
Theorem \ref{thm:c0estimate} is proved in sections 3.1-3.4.
We only prove (2), since proof of (1) is much simpler than that of (2).
In section 3.1, we reduce Theorem \ref{thm:c0estimate} to three lemmas.
These lemmas are proved in sections 3.2 -- 3.4.
In section 3.5, we prove Proposition \ref{prop:isotopyinvarianceofWFH}.
The proof of Proposition \ref{prop:isotopyinvarianceofWFH} is similar to the proof of invariance of symplectic homology under deformations
(see, for instance, \cite{Seidel}).
The crucial step in the proof of Proposition \ref{prop:isotopyinvarianceofWFH} is a $C^0$ estimate for Floer trajectories
(Lemma \ref{lem:deformation}), and its proof is very similar to the proof of Theorem \ref{thm:c0estimate}.
Hence in section 3.5, we only mention few points which make difference.

\subsection{Reduction of the proof to three lemmas}

First, we introduce some abbreviations which we will use in the following of this section.
We abbreviate $a_{H^s}$ by $a^s$, 
and $\Phi_{a^s}$, $\lambda^{a^s}$, $\xi^{a^s}$, $R^{a^s}$
by $\Phi_s$, $\lambda^s$, $\xi^s$, $R^s$.
Moreover, we abbreviate $\langle\,\cdot\,,\,\cdot\,\rangle_{a^s,J^s_t}$ by $\langle\,\cdot\,,\,\cdot\,\rangle_{s,t}$, 
$\langle\,\cdot\,,\,\cdot\,\rangle_{a^s,J^s_t,\partial M}$ by $\langle\,\cdot\,,\,\cdot\,\rangle_{s,t,\partial M}$,
and $c_{J^s_t}$ by $c_{s,t}$
(see section 2.4).
Finally, we abbreviate an almost complex structure $(\Phi_s)^*(J^s_t)$ on $\partial M \times (0,\infty)$
by $\overline{J}^s_t$.

Take $\rho_0>0$ so large that $\Phi_s\bigl(\partial M \times [\rho_0,\infty)\bigr) \subset \hat{M} \setminus K$
for any $s$.
Take smooth function $\varphi:(0,\infty) \to \R$ such that
\begin{align*}
\varphi''(\rho)&\ge 0, \\
\varphi'(\rho)&=1\quad(\rho \ge \rho_0+1),\\
\varphi(\rho)&=0\quad(\rho \le \rho_0).
\end{align*}
Note that $\varphi(\rho) \ge \rho-(\rho_0+1)$ for any $\rho$.

For each $s \in \R$, we define $\varphi^s \colon \hat{M} \to \R$ by
\[
\varphi^s(x) = \begin{cases} 
               \varphi(\rho) &\bigl(x=\Phi_s(z,\rho) \bigr) \\
                       0     &(\text{otherwise})
            \end{cases}.
\]
By definition of $\rho_0$ and $\varphi$, it is easy to verify that each $\varphi^s$ is a smooth function on $\hat{M}$,
and $\varphi^s|_K \equiv 0$.

For $x_- \in \mca{C}(H)$, $x_+ \in \mca{C}(H')$ and 
$u \in \mca{M}_{(H^s,J^s_t)}(x_-, x_+)$, 
we define $\alpha^u: \R \times [0,1] \to \R$ by $\alpha^u(s,t)=\varphi^s\bigl(u(s,t)\bigr)$.

\begin{lem}\label{lem:boundary}
$\partial_t\alpha^u=0$ on $\R \times \{0,1\}$.
\end{lem}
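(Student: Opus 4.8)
The plan is to show that along the boundary components $\R\times\{0\}$ and $\R\times\{1\}$, the function $\alpha^u(s,t)=\varphi^s(u(s,t))$ is constant in $t$; since $t$ ranges over a single boundary value in each component, this amounts to computing $\partial_t\alpha^u$ there and checking it vanishes. The computation is $\partial_t\alpha^u = d\varphi^s(\partial_t u)$. Now on the Lagrangian boundary $u(\R\times\{0,1\})\subset\hat L$, so $\partial_t u(s,t)\in T_{u(s,t)}\hat L$ for $t\in\{0,1\}$. So the whole claim reduces to showing $d\varphi^s$ annihilates $T\hat L$ at points of $\hat L$, i.e. that $\varphi^s$ is (locally) constant along $\hat L$.

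**First I would** analyze the structure of $\varphi^s$ near $\hat L$. By construction $\varphi^s$ vanishes on $K\supset M$, so on $\hat L\cap K = L$ there is nothing to prove. Outside $K$, we have $\varphi^s = \varphi(\rho)$ in the coordinates $\Phi_s\colon\partial M\times(0,\infty)\to\Cyl(\hat M)$, where $\rho$ is the $(0,\infty)$-coordinate. The point is that $\hat L\cap\Cyl(\hat M)$, pushed back through $\Phi_s$, is of the form $(\text{something})\times(0,\infty)$ in these coordinates — more precisely $\hat L = L\cup\partial L\times[1,\infty)$, and under the identification $\Cyl(\hat M)\cong\partial M\times(0,\infty)$ via $\Phi$ (and $\Phi_s$ differs from $\Phi$ only by rescaling the $\rho$-coordinate by the positive function $a^s(z)^{-1}$), the Lagrangian $\hat L$ corresponds to $\partial L\times(0,\infty)$. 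Hence $\Phi_s^{-1}(\hat L)$ is a product with the $\rho$-factor, so the $\rho$-coordinate restricted to $\Phi_s^{-1}(\hat L)$ has differential with no component "along $\partial L$" — i.e. $d\rho|_{T(\partial L\times(0,\infty))}$ only pairs with $\partial_\rho$. Therefore $d\varphi^s = \varphi'(\rho)\,d\rho$ restricted to $T\hat L$ vanishes on the $\partial L$-directions.

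**The one subtlety** — and the step I expect to need the most care — is making precise that $\partial_t u(s,t)$ at $t\in\{0,1\}$ lies in the $\partial L$-directions of $\hat L$ and not the $\rho$-direction; but in fact even that is unnecessary, because the argument above shows $d\varphi^s|_{T\hat L}$ is $\varphi'(\rho)\,d\rho|_{T\hat L}$, and we must be careful that $d\rho$ does \emph{not} vanish on all of $T\hat L$ (it doesn't: $\partial_\rho\in T\hat L$). The correct claim is simply that $\varphi^s\circ u$ restricted to each boundary line is constant — equivalently that $\varphi^s|_{\hat L}$ is constant — but that is false since $\varphi(\rho)$ is non-constant along $\partial L\times[1,\infty)\subset\hat L$. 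So the statement must instead be read as $\partial_t\alpha^u=0$: we differentiate in the $t$-direction only, and on $\R\times\{0\}$ the tangent vector $\partial_t u$ points in a direction \emph{transverse to the boundary line}, which still lies in $T\hat L$. Thus I would argue: at $(s,t)$ with $t\in\{0,1\}$, write $\partial_t u = v + c\,\partial_\rho$ with $v$ tangent to $\partial L$ (in the $\Phi_s$-coordinates); then $\partial_t\alpha^u = \varphi'(\rho)\bigl(d\rho(v) + c\bigr) = \varphi'(\rho)\cdot c$. Hmm — this is not obviously zero either.

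**So the genuine content** must be that the $\partial_\rho$-component $c$ of $\partial_t u$ vanishes on the Lagrangian boundary. I would extract this from the Floer equation together with the contact-type condition on $J^s_t$: on $\R\times\{0,1\}$ we have $\partial_s u\in T\hat L$ as well (boundary is mapped into $\hat L$, in both the $s$ and $t$ directions at interior boundary points — actually $\partial_s u$ is tangent to the boundary line, hence in $T\hat L$), and the Floer equation reads $\partial_s u = J^s_t(\partial_t u - X_{H^s}(u))$. Since $\hat\lambda|_{\hat L}=0$ and $X_{H^s}$ is (on the cylindrical end, where $H^s=a^s\rho+b^s$) a multiple of the Reeb direction $R^s$ which is tangent to $\partial L\times\{\rho\}$, one checks $X_{H^s}(u)\in T\hat L$ at boundary points with $u\in\hat L$. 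Hence $\partial_t u - X_{H^s}(u)\in T\hat L$, so $J^s_t(\partial_t u - X_{H^s}(u)) = \partial_s u\in T\hat L$, giving $J^s_t(T_{u}\hat L)\cap T_u\hat L\neq 0$ in a controlled way — and from the explicit form of $\Phi_s^*J^s_t$ in Definition \ref{defn:contacttype} (it sends $\partial_\rho$ to a multiple of $\overline{R^s}$ and preserves $\overline{\xi^s}$), the only vectors $w\in T\hat L = \overline{\xi^s\cap T\partial L}\oplus\R\overline{R^s}$ with $J^s_t w$ also in $T\hat L$ are those in $\overline{\xi^s\cap T\partial L}$, i.e. with zero $\partial_\rho$-component \emph{and} zero $\overline{R^s}$-component. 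Applying this to $w = \partial_t u - X_{H^s}(u)$ and separately accounting for $X_{H^s}$ (which is purely in the $\overline{R^s}$-direction on the end) forces the $\partial_\rho$-component of $\partial_t u$ to vanish, whence $\partial_t\alpha^u = \varphi'(\rho)\cdot 0 = 0$. This linear-algebra-on-the-contact-end step, combined with correctly bookkeeping $X_{H^s}$, is the main obstacle; everything else is the routine observation that $\varphi^s$ is defined via a function of $\rho$ alone.
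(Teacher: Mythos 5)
Your overall strategy is the paper's: reduce $\partial_t\alpha^u=0$ to the vanishing of the $\partial_\rho$-component of $\partial_t v$ (where $v=\Phi_s^{-1}\circ u$ on the cylindrical end), and obtain that from the Floer equation together with the contact-type form of $J^s_t$. However, several of the concrete claims in your decisive step are false, so the argument as written does not close. (i) $\partial_t u$ does \emph{not} lie in $T\hat{L}$ at boundary points; only $\partial_s u$ does, being the velocity of the curve $s\mapsto u(s,0)$, which lies in $\hat{L}$. So the decomposition $\partial_t u=v+c\,\partial_\rho$ with $v$ tangent to $\partial L$, and the later claim $\partial_t u-X_{H^s}(u)\in T\hat{L}$, are unfounded. (ii) $R^s$ is \emph{not} tangent to $\partial L$: $\partial L$ is a Legendrean, so $T(\partial L)\subset\xi^s$, whereas $\lambda^s(R^s)=1$. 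Hence $X_{H^s}$, which in the $\Phi_s$-coordinates is a multiple of $\overline{R^s}$, is not in $T\hat{L}$, and your bookkeeping of the Hamiltonian term fails. (iii) On the cylindrical end $T\hat{L}=\overline{T(\partial L)}\oplus\R\,\partial_\rho$, not $\overline{\xi^s\cap T(\partial L)}\oplus\R\,\overline{R^s}$ as you write; the space you name is essentially $\overline{J}^s_t\bigl(T\hat{L}\bigr)$, not $T\hat{L}$.

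The paper's argument needs none of these claims. Writing the Floer equation in the $\Phi_s$-coordinates gives $\partial_s v-\overline{J}^s_t\partial_t v-\rho\,C\,\partial_\rho=0$ for a scalar factor $C$: the Hamiltonian term contributes only a multiple of $\partial_\rho$ after applying $\overline{J}^s_t$, because $H^s\circ\Phi_s$ is an affine function of $\rho$ alone, so $X_{H^s}$ is a multiple of $\overline{R^s}$ and $\overline{J}^s_t\overline{R^s}$ is a multiple of $\partial_\rho$. Solving this for $\partial_t v$ and pairing with $\partial_\rho$, it suffices that $\overline{J}^s_t\partial_s v$ and $\overline{J}^s_t\partial_\rho$ are both orthogonal to $\partial_\rho$. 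The latter is a multiple of $\overline{R^s}$. For the former one uses only $\partial_s v\in\overline{T(\partial L)}\oplus\R\,\partial_\rho\subset\overline{\xi^s}\oplus\R\,\partial_\rho$, which $\overline{J}^s_t$ maps into $\overline{\xi^s}\oplus\R\,\overline{R^s}$, the orthogonal complement of $\partial_\rho$ by lemma \ref{lem:metric}. If you repair (iii) and drop the false claims in (i) and (ii), your outline becomes exactly this argument.
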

\begin{proof}
If $u(s,t) \in K$, then $\alpha^u \equiv 0$ on some neighborhood of $(s,t)$,
hence $\partial_t\alpha^u(s,t)=0$.
Therefore it is enough to consider the case $u(s,t) \notin K$.
Let $D:=\bigl\{(s,t) \in \R \times [0,1] \bigm{|} u(s,t) \notin K \bigr\}$.
This is an open set in $\R \times [0,1]$.
Define $v: D \to \partial M \times (0,\infty)$ by
\[
v(s,t):=(\Phi_s)^{-1}\bigl(u(s,t)\bigr)
\]
and $z: D \to \partial M$, $\rho: D \to (0,\infty)$ by
\[
\bigl(z(s,t),\rho(s,t)\bigr):=v(s,t).
\]
Since $u$ satisfies $\partial_s u-J^s_t\bigl(\partial_t u-X_{H^s}(u)\bigr)=0$, by simple calculation we obtain:
\begin{equation}\label{eq:floereq_v}
\partial_s v - \overline{J}^s_t \partial_t v - \rho\cdot\bigl(c_{s,t}+ \partial_s a^s(z)\cdot a^s(z)^{-1}\bigr)\partial_\rho=0.
\end{equation}
Since $\alpha^u(s,t)=\varphi\bigl(\rho(s,t)\bigr)$, it is enough to show $d\rho(\partial_t v)=0$.
By (1) in Lemma \ref{lem:metric}, it is equivalent to $\langle \partial_t v, \partial_\rho \rangle_{s,t}=0$.
By (\ref{eq:floereq_v}), it is enough to check 
\[
\big\langle \overline{J}^s_t \partial_s v, \partial_\rho \big\rangle_{s,t}=0, \qquad
\big\langle \overline{J}^s_t\partial_\rho, \partial_\rho \big\rangle_{s,t}=0.
\]
The latter is obvious. Since $u\bigl(\R \times \{0,1\}\bigr) \subset \hat{L}$, 
if $t \in \{0,1\}$ then 
\[
\partial_s v(s,t) \in T(\partial L) \oplus \R \partial_\rho \subset \xi^s \oplus \R \partial_\rho.
\]
Hence 
 $\overline{J}^s_t \partial_s v \in \xi^s \oplus \R R^s$.
Therefore
$\overline{J}^s_t\partial_s v$ is orthogonal to $\partial_\rho$.
\end{proof}

Following three lemmas play crucial role in the proof of Theorem \ref{thm:c0estimate}.
They are proved in sections 3.2 -- 3.4.

\begin{lem}\label{lem:ellipticestimate}
For any $x_- \in \mca{C}(H)$ and $x_+ \in \mca{C}(H')$, 
there exists $c_0(x_-,x_+) ,c_1(x_-,x_+) >0$ such that
$\Delta \alpha^u + c_0(x_-,x_+) \alpha^u + c_1(x_-,x_+) \ge 0$
for every $u \in \mca{M}_{(H^s,J^s_t)_{s,t}}(x_-, x_+)$.
\end{lem}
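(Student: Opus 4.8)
The plan is to derive the subharmonicity-type inequality for $\alpha^u$ by a direct computation of the Laplacian on the region $D$ where $u$ leaves $K$, using the Floer equation \eqref{eq:floereq_v} expressed in the cylindrical coordinates $(z,\rho) = v(s,t)$. Since $\alpha^u = \varphi \circ \rho$ on $D$ and $\alpha^u \equiv 0$ off $\overline{D}$, and since $\varphi'' \ge 0$ and $\varphi' \ge 0$, the chain rule gives
\[
\Delta \alpha^u = \varphi''(\rho)\,|\nabla \rho|^2 + \varphi'(\rho)\,\Delta\rho \ge \varphi'(\rho)\,\Delta\rho ,
\]
so it suffices to bound $\varphi'(\rho)\,\Delta\rho$ from below by an expression of the form $-c_0\alpha^u - c_1$. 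First I would compute $\Delta \rho = \partial_s(d\rho(\partial_s v)) + \partial_t(d\rho(\partial_t v))$ (the metric cross terms involving the orthogonal splitting of Lemma \ref{lem:metric} having been arranged to vanish, exactly as in Lemma \ref{lem:boundary}). Using Lemma \ref{lem:metric}(3), $d\rho(\partial_\rho) = 1$ and $|\partial_\rho|^2_{s,t} = (\rho c_{s,t})^{-1}$, together with \eqref{eq:floereq_v}, one rewrites $d\rho(\partial_s v)$ and $d\rho(\partial_t v)$ in terms of $\langle \partial_s v, \partial_\rho\rangle_{s,t}$ and the coefficient $\rho\cdot(c_{s,t} + \partial_s a^s(z)a^s(z)^{-1})$ appearing in the Floer equation.

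The key algebraic step is that, after substituting \eqref{eq:floereq_v} and using $\overline{J}^s_t$-compatibility of the metric, the "bad" terms in $\Delta\rho$ organize into a manifestly nonnegative quantity (a sum of squared norms of $\partial_s v$ and $\partial_t v$, coming from the energy density) plus a term proportional to $\rho$ with a smooth, $s$-periodic-in-the-ends — hence bounded — coefficient built from $c_{s,t}$ and $\partial_s a^s \cdot (a^s)^{-1}$; here one uses that $(H^s)_s$ is a monotone homotopy constant outside $[-s_0,s_0]$ and $(J^s_t)$ is likewise eventually $s$-independent, so all these coefficients are bounded uniformly in $(s,t)$. On the support of $\varphi'$ one has $\rho \le \varphi(\rho) + (\rho_0 + 1) = \alpha^u + (\rho_0+1)$, which converts the $\rho$-linear term into the desired $-c_0 \alpha^u - c_1$ shape; the constants depend on $\sup|c_{s,t}|$, $\sup|\partial_s a^s \cdot (a^s)^{-1}|$, $\rho_0$, and — through the range of $\rho$ and $z$ swept out, which is controlled by the asymptotics $u(s) \to x_\pm$ — on $x_-$ and $x_+$, explaining the notation $c_0(x_-,x_+)$, $c_1(x_-,x_+)$. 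On the interior of $K$ (where $\alpha^u \equiv 0$) and across $\partial K$ the inequality is trivial or follows by the smoothness of $\varphi^s$ already noted after its definition, so it holds on all of $\R \times [0,1]$.

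The main obstacle I expect is bookkeeping the precise form of $\Delta\rho$: one must carefully expand $\partial_s(d\rho(\partial_s v)) + \partial_t(d\rho(\partial_t v))$ using the Levi-Civita connection of $\langle\,\cdot\,,\,\cdot\,\rangle_{s,t}$, keep track of the $\rho$-dependence of the metric coefficients (Lemma \ref{lem:metric}(2),(3) show they scale like $\rho^{\pm 1}$), and verify that the terms which are not obviously signed are exactly those that the $\rho \le \alpha^u + (\rho_0+1)$ substitution absorbs — in other words, showing that no term grows faster than linearly in $\rho$. This is the same mechanism as in \cite{FloerHofer} and in Cieliebak's argument \cite{Cieliebak}, adapted to the Lagrangian boundary condition, which by Lemma \ref{lem:boundary} contributes no boundary obstruction; once the expansion is in hand the inequality is immediate.
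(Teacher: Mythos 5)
Your overall approach is the same as the paper's: write $\Delta\alpha^u = \varphi''(\rho)|\nabla\rho|^2 + \varphi'(\rho)\Delta\rho \ge \varphi'(\rho)\Delta\rho$, derive a lower bound $\Delta\rho \ge -c_2\rho$ by applying $d\rho$ and $\lambda^s$ to the Floer equation \eqref{eq:floereq_v} and using the scaling properties of the cylindrical metric from Lemma \ref{lem:metric}, and then convert $\rho$ into $\alpha^u$ via $\rho \le \alpha^u + (\rho_0+1)$ on $\supp\varphi'$. This is exactly the paper's route, so the plan is sound.

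However, there is one genuine conceptual error that you should fix. You explain the $x_\pm$-dependence of $c_0, c_1$ as coming from ``the range of $\rho$ and $z$ swept out, which is controlled by the asymptotics $u(s)\to x_\pm$.'' This is circular: the whole point of Lemma \ref{lem:ellipticestimate} is to feed into the $C^0$ estimate of Theorem \ref{thm:c0estimate}, which is precisely the statement that the range of $\rho$ swept out by $u$ is a priori bounded. You cannot use a bound on the range of $u$ to produce the constants in the differential inequality that is itself the main input to bounding that range. In fact the constants the paper obtains are \emph{uniform}: they depend only on $\sup c_{s,t}$, $\sup|\partial_s a^s \cdot (a^s)^{-1}|$, the derivatives of $\lambda^s$ and $c_{s,t}$ in $s,t$, and $\rho_0$ --- not on $x_\pm$ or on $u$ at all. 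The $x_\pm$-dependence allowed in the statement is simply never used here (it genuinely arises only in Lemma \ref{lem:sequence}, through the action gap $\mca{A}_H(x_-)-\mca{A}_{H'}(x_+)$).

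There is also a smaller gap in your description of how the ``bad'' terms are absorbed. After expanding $\Delta\rho$ one does not get only a nonnegative term plus $O(\rho)$: one also gets a cross term of size $\rho^{1/2}|\partial_s v|_{s,t}$, coming from the estimates $|\partial_s z|_{s,t,\partial M} \le \rho^{-1/2}|\partial_s v|_{s,t}$ and similarly for $\partial_t z$, multiplied by the prefactor $\rho$. Absorbing this cross term into the nonnegative quadratic term $c_{s,t}|\partial_s v|^2_{s,t}$ requires an explicit Young (complete-the-square) step, $c_6\rho^{1/2}|\partial_s v| \le \tfrac{1}{2}c_{s,t}|\partial_s v|^2 + \tfrac{1}{2}c_{s,t}^{-1}c_6^2\rho$, which you do not mention but which is exactly what makes the final constant depend on $\sup c_{s,t}^{-1}$. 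Once you supply this step and drop the spurious $x_\pm$-dependence, the proof is complete and uniform in $u$.
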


\begin{lem}\label{lem:sequence}
For any $x_- \in \mca{C}(H)$, $x_+ \in \mca{C}(H')$ and 
$\delta>0$, there exists $c(x_-, x_+, \delta)>0$ such that:
for any $u \in \mca{M}_{(H^s,J^s_t)_{s,t}}(x_-, x_+)$, there exists a sequence $(s_k)_{k \in \Z}$ with following properties:
\begin{enumerate}
\item $0<s_{k+1}-s_k<\delta$ for any $k$.
\item $\sup_{t \in [0,1]}\alpha^u(s_k,t) \le c(x_-, x_+, \delta)$ for any $k$.
\end{enumerate}
\end{lem}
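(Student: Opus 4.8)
The plan is to extract the sequence $(s_k)_k$ from an energy/action estimate together with a mean-value argument on the cylindrical part. By Lemma \ref{lem:energyformula} applied to $u \in \mca{M}_{(H^s,J^s_t)_{s,t}}(x_-,x_+)$, the total energy
\[
E(u) := \int_{\R}\int_0^1 \big\lvert \partial_s u(s,t) \big\rvert^2_{J^s_t}\,dt\,ds
\]
is bounded by $\mca{A}_H(x_-) - \mca{A}_{H'}(x_+) + \int_{\R}\int_0^1 \partial_s H^s(u(s,t))\,dt\,ds$, and since $\partial_s H^s$ is supported in $s \in (-s_0,s_0)$ and is bounded there, we get a bound $E(u) \le E_0(x_-,x_+)$ independent of $u$. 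First I would use this to control the "average height" of $u$: for any interval $[s,s+\delta]$, the $L^2$-in-$s$ smallness of $\partial_s u$ forces the existence of some slice $s' \in [s,s+\delta]$ on which $\int_0^1 |\partial_s u(s',t)|^2_{s',t}\,dt$ is small, hence on which $u(s',\cdot)$ is $C^0$-close (after the usual elliptic bootstrap, or just by integrating $\partial_t u = X_{H^s}(u) + J^s_t\partial_s u$) to a Hamiltonian chord of $H^{s'}$. The set of all such chords, as $s'$ ranges over the compact interval $[-s_0,s_0]$ together with the genuine chords $\mca{C}(H)$, $\mca{C}(H')$ outside it, has uniformly bounded image in $\hat{M}$ — this uses that $H^s \in \mca{H}_K(\hat{M})$ for the single compact $K$ and that $a^s$ and its derivatives are uniformly controlled on the compact homotopy, so chords of $H^{s'}$ cannot escape to infinity. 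Composing with $\varphi^{s'}$ gives the desired bound $\sup_t \alpha^u(s',t) \le c$ on that slice.

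Concretely, the steps are: (i) record the uniform energy bound $E(u) \le E_0(x_-,x_+)$ from Lemma \ref{lem:energyformula}; (ii) partition $\R$ into intervals $I_k = [k\delta/2,(k+1)\delta/2]$ (say) and, on each, pick $s_k \in I_k$ with $\int_0^1 |\partial_s u(s_k,t)|^2_{s_k,t}\,dt \le 2E_0/\delta$ by the pigeonhole/mean-value principle — this immediately gives property (1) with $s_{k+1}-s_k < \delta$; (iii) from this slice-wise gradient bound deduce a uniform $C^0$ bound on $u(s_k,\cdot)$: either directly, bounding $\rho \circ v$ using equation (\ref{eq:floereq_v}) and the metric identities of Lemma \ref{lem:metric} in the region $D$ where $u(s_k,t) \notin K$, or by the standard argument that a Floer slice with small energy lies near the (compact) set of chords; (iv) translate the $C^0$ bound into $\sup_t \alpha^u(s_k,t) \le c(x_-,x_+,\delta)$, using that $\varphi^s$ is monotone in $\rho$ and the chord set has $\rho$-coordinate bounded by a constant depending only on $K$ and the compact family $(H^s)$.

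The main obstacle I expect is step (iii): turning a bound on $\int_0^1 |\partial_s u(s_k,t)|^2\,dt$ at a single slice into an honest $C^0$ bound on that slice. A pointwise $L^2$ bound in $t$ does not by itself bound $\|u(s_k,\cdot)\|_{C^0}$ without some auxiliary input. The cleanest route is to observe that on the cylindrical end the $\rho$-component of $v = \Phi_s^{-1}\circ u$ satisfies, by (\ref{eq:floereq_v}) and Lemma \ref{lem:metric}, a differential inequality controlled by $|\partial_s v|$, so that a slice with small $\int_0^1 |\partial_s v|^2\,dt$ cannot have $\rho$ climbing arbitrarily high while still connecting to the fixed asymptotic chords $x_\pm$ whose heights are fixed — this is really the same maximum-principle mechanism used for Lemma \ref{lem:boundary}, now applied fiberwise in $s$. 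Alternatively one invokes elliptic regularity (Floer's $\epsilon$-regularity: a ball of small energy gives a $C^1$ bound), which is available here once $C^0$ confinement is known on a neighborhood — but since $C^0$ confinement is exactly what we are proving, I would favor the maximum-principle argument, which is self-contained and parallels the reasoning already in place. Once the slice bound is in hand, combining it with the subharmonicity-type estimate of Lemma \ref{lem:ellipticestimate} on the strips between consecutive $s_k$ is what ultimately yields the global $C^0$ estimate of Theorem \ref{thm:c0estimate}, so this lemma is precisely the "horizontal control" half of that proof.
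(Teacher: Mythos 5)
Your overall architecture (energy bound from Lemma \ref{lem:energyformula}, pigeonhole selection of slices $s_k$ at spacing $<\delta$, then a claim that bounded slice data forces bounded height) matches the paper's decomposition into its Lemmas \ref{lem:sequence1} and \ref{lem:sequence2}, and your part (1) is fine. But your step (iii) --- the slice-wise $C^0$ bound --- contains a genuine gap, and the specific claim you lean on is false. You assert that the set of time-one chords of $H^{s'}$, as $s'$ ranges over $[-s_0,s_0]$, has uniformly bounded image because $H^{s'}\in\mca{H}_K(\hat M)$ for a fixed compact $K$. This is not true: admissibility (the condition $1\notin\mca{A}(\partial M, a_H^{-1}\lambda,\partial L)$) is assumed only for the endpoints $H$ and $H'$, not for the interpolating Hamiltonians. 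If $1\in\mca{A}(\partial M,\lambda^{s'},\partial L)$ for some interior $s'$ --- which nothing in the hypotheses excludes --- then on the cone $X_{H^{s'}}$ has time-one chords at \emph{every} level $\rho$, so the chord set is unbounded and a slice with small $\int_0^1|\partial_t x - X_{H^{s'}}(x)|^2_{J^{s'}_t}\,dt$ can sit arbitrarily high. Your fallback, a ``fiberwise maximum principle,'' does not apply either: a single slice is a path in $t$ alone whose endpoints lie on $\hat L$, not at the asymptotic chords $x_\pm$, so there is no boundary data at that slice to anchor a maximum principle, and the genuine $(s,t)$ maximum principle (Lemmas \ref{lem:ellipticestimate} and \ref{lem:mponstrip}) takes the present lemma as \emph{input}, so invoking it here is circular.

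The missing ingredient is the term $\int_0^1 \partial_s H^s\bigl(u(s_k,t)\bigr)\,dt$, which you discard at the pigeonhole step. Since $\partial_s H^s\ge 0$ for a monotone homotopy, keeping it in the slice bound costs nothing, and it is exactly what rules out escape to infinity: arguing by contradiction, if slices satisfying the combined bound had $\sup_t\varphi^{s_k}(x_k(t))\to\infty$, one first shows (using the gradient term and the metric identities of Lemma \ref{lem:metric}) that the whole slice escapes, and then that the rescaled $\partial M$-components converge to a period-one Reeb chord of $\lambda^{s_\infty}$; admissibility of $H$ and $H'$ forces $s_\infty\in(-s_0,s_0)$, where $\partial_s a^s\ge\varepsilon>0$, whence $\int_0^1\partial_s H^{s_k}(x_k(t))\,dt\ge \varepsilon A^{-1}\int_0^1\rho_k(t)\,dt\to\infty$, contradicting the slice bound. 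Without this term your argument cannot close, and with it the proof is essentially the paper's.
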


\begin{lem}\label{lem:mponstrip}
Assume that $a, b, \lambda \ge 0$ and $\delta>0$ are given such that $\delta^2\lambda<\pi^2$.
Then, there exists $c(a,b,\lambda,\delta)>0$ such that, 
if a closed interval $I$ satisfies $0<|I| \le \delta$ and 
a smooth function $\alpha \colon I \times [0,1] \to \R$ satisfies 
\begin{enumerate}
\item $\partial_t \alpha=0$ on $I \times \{0,1\}$, 
\item $\Delta \alpha + \lambda \alpha + a \ge 0$, 
\item $\sup\bigl\{ \alpha(s,t) \bigm{|} s \in \partial I\bigr\} \le b$, 
\end{enumerate}
then, $\sup \alpha \le c(a,b,\lambda,\delta)$.
\end{lem}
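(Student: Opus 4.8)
The plan is to dominate $\alpha$ by an explicit $t$-independent supersolution. Write $I=[\sigma_-,\sigma_+]$, $\ell:=|I|\le\delta$, $m:=\tfrac12(\sigma_-+\sigma_+)$, and suppose first that $\lambda>0$. Since $\delta^2\lambda<\pi^2$ we have $\sqrt{\lambda}\,\delta/2<\pi/2$, so $g(s):=\cos(\sqrt{\lambda}\,(s-m))$ is smooth and positive on $\overline{I}$, solves $g''+\lambda g=0$, and satisfies $g(s)\ge\kappa_0:=\cos(\sqrt{\lambda}\,\delta/2)>0$ for all $s\in I$, where $\kappa_0$ depends only on $\lambda,\delta$. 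I would then take
\[
\beta(s):=\frac{b}{\kappa_0}\,g(s)+\frac{a}{\lambda}\Big(\frac{g(s)}{\kappa_0}-1\Big),
\]
and verify the following one-line facts: $\Delta\beta+\lambda\beta+a\equiv 0$ (from $g''+\lambda g=0$); $\beta(\sigma_\pm)\ge b$ (since $g(\sigma_\pm)\ge\kappa_0$ and $a,b\ge 0$); and $\beta\le\tfrac{b}{\kappa_0}+\tfrac{a}{\lambda}(\kappa_0^{-1}-1)=:c(a,b,\lambda,\delta)$ on $I$ (since $g\le 1$).

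Next, set $\gamma:=\alpha-\beta$ on $I\times[0,1]$. Hypotheses (1)--(3), together with the properties of $\beta$, give $\Delta\gamma+\lambda\gamma\ge 0$, $\partial_t\gamma=0$ on $I\times\{0,1\}$, and $\gamma\le 0$ on $\partial I\times[0,1]$; it remains to show $\gamma\le 0$ on all of $I\times[0,1]$. Using the Neumann condition, reflect $\gamma$ evenly across $t=0$ and $t=1$ to obtain $\tilde\gamma$ on the flat cylinder $I\times(\R/2\Z)$; because $\partial_t\gamma$ vanishes on the fold lines, $\tilde\gamma$ is $C^2$, still satisfies $\Delta\tilde\gamma+\lambda\tilde\gamma\ge 0$, and $\tilde\gamma\le 0$ on the (now smooth) boundary $\partial I\times(\R/2\Z)$. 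Now substitute $\tilde\gamma=g\cdot\eta$; using $g''=-\lambda g$ one obtains
\[
\Delta\tilde\gamma+\lambda\tilde\gamma=g\Big(\Delta\eta+2\tfrac{g'}{g}\,\partial_s\eta\Big),
\]
so $\eta$ satisfies a uniformly elliptic differential inequality with bounded coefficients and \emph{no zeroth order term}, and $\eta\le 0$ on the boundary. The weak maximum principle on the compact domain $\overline{I}\times(\R/2\Z)$ then gives $\eta\le 0$, hence $\alpha\le\beta$, hence $\sup\alpha\le\sup_I\beta\le c(a,b,\lambda,\delta)$.

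The case $\lambda=0$ runs along the same lines with the quadratic barrier $\beta(s):=b+\tfrac a2(s-\sigma_-)(\sigma_+-s)$, which satisfies $\Delta\beta+a\equiv 0$, $\beta(\sigma_\pm)=b$, and $\beta\le b+a\delta^2/8$ on $I$; here $\gamma=\alpha-\beta$ is subharmonic, and after the same even reflection the ordinary maximum principle for subharmonic functions finishes the argument. I expect the only genuine difficulty to be the presence of the zeroth order term $+\lambda\alpha$ with the \emph{unfavourable} sign, which blocks any direct use of the maximum principle or the Hopf lemma: the hypothesis $\delta^2\lambda<\pi^2$ is precisely what makes $\cos(\sqrt{\lambda}\,(s-m))$ positive on every interval of length $\le\delta$, which both furnishes the supersolution $\beta$ and, through the conjugation $\tilde\gamma=g\eta$, removes the offending term. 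Reflecting away the Neumann condition and the elliptic comparison itself are routine.
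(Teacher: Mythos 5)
Your proof is correct, but it takes a genuinely different route from the paper's. The paper does not argue directly: it first perturbs $\alpha$ to a nearby function $\beta$ whose Taylor expansion in $t$ at the fold lines $t=0,1$ is purely even and quadratic, so that the doubling $\overline{\beta}$ on $I\times\R/2\Z$ is $C^\infty$; it then invokes Proposition 8 of Floer--Hofer (stated here as lemma \ref{lem:maximumprinciple}) as a black box on the cylinder, with constants shifted by $O(\varepsilon)$, and lets $\varepsilon\to 0$. You instead build an explicit $t$-independent supersolution $\beta(s)=\tfrac{b}{\kappa_0}\cos(\sqrt\lambda(s-m))+\tfrac{a}{\lambda}(\kappa_0^{-1}\cos(\sqrt\lambda(s-m))-1)$, observe that the even reflection of $\gamma=\alpha-\beta$ is $C^2$ (which is all the classical maximum principle needs, so no smoothing is required), and remove the unfavourable zeroth-order term by the conjugation $\tilde\gamma=g\eta$. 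This is more self-contained --- it inlines what would otherwise be delegated to [FH] --- and it makes explicit why the hypothesis $\delta^2\lambda<\pi^2$ is sharp (it is exactly the condition that $\cos(\sqrt\lambda\,\cdot\,)$ stay positive on intervals of length $\delta$, which is what furnishes the barrier and the conjugating factor). Both approaches are sound; yours trades the citation for a short explicit computation. One cosmetic point: when $a=b=0$ your formula gives $c=0$, whereas the lemma asks for $c>0$; replacing $c$ by $\max(c,1)$ fixes this triviality.
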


We give a proof of Theorem \ref{thm:c0estimate} (2) assuming those results.
Since $\mca{C}(H)$ and $\mca{C}(H')$ are finite sets, it is enough to show that:
\begin{quote}
For any $x_- \in \mca{C}(H)$ and $x_+ \in \mca{C}(H')$, there exists a compact set $B(x_-,x_+) \subset \hat{M}$ such 
that any $u \in \hat{\mca{M}}_{(H^s,J^s_t)_{s,t}}(x_-,x_+)$ satisfies $u\bigl(\R \times [0,1]\bigr) \subset B(x_-,x_+)$.
\end{quote}
Take $\delta>0$ so small that $\delta^2c_0<\pi^2$.
Then, for any $u \in \hat{\mca{M}}_{(H^s,J^s_t)_{s,t}}(x_-,x_+)$, if we take $(s_k)_k$ as in Lemma \ref{lem:sequence}, 
$u|_{I \times [s_k,s_{k+1}]}$ satisfies assumptions of Lemma \ref{lem:mponstrip} for each $k$, 
with $a=c_1, b=c(x_-, x_+, \delta), \lambda=c_0$.
(it follows from Lemma \ref{lem:boundary} and Lemma \ref{lem:ellipticestimate}).
Hence $\sup \alpha_u \le c\bigl(c_1,c(x_-, x_+, \delta),c_0,\delta\bigr)$. This proves the above claim.

\subsection{Proof of Lemma \ref{lem:ellipticestimate}}
Let $x_- \in \mca{C}(H)$ and $x_+ \in \mca{C}(H')$.
Our goal is to show that there exist $c_0, c_1>0$, which are independent of 
$u \in \hat{\mca{M}}_{(H^s,J^s_t)_{s,t}}(x_-, x_+)$, such that 
\begin{equation}\label{eq:ellipticestimate}
\Delta \alpha^u + c_0 \alpha^u + c_1 \ge 0
\end{equation}
holds on $\R \times [0,1]$.
In the following of this subsection, we fix $u$ and abbreviate $\alpha^u$ by $\alpha$.

If $u(s,t)\in K$, then $\alpha \equiv 0$ on some neighborhood of $(s,t)$, and 
(\ref{eq:ellipticestimate}) holds for any $c_0,c_1>0$.
Therefore, it is enough to show (\ref{eq:ellipticestimate}) for $(s,t) \in D$
(we use notations $D, v, z, \rho$ which are introduced in the proof of Lemma \ref{lem:boundary}).

Since $\alpha|_D = \varphi \circ \rho$, we get
\begin{equation}\label{eq:laplacian}
\Delta \alpha = \varphi''(\rho)\bigl((\partial_s\rho)^2+(\partial_t\rho)^2\bigr)+\varphi'(\rho)\Delta \rho
\ge \varphi'(\rho)\Delta\rho.
\end{equation}
Assume for the moment that there exists $c_2>0$, which is independent of $u$ and 
\begin{equation}\label{eq:rho}
\text{$\Delta \rho + c_2 \rho \ge 0$ on $D$.}
\end{equation}
Then, combining (\ref{eq:laplacian}), (\ref{eq:rho}) and 
$\varphi(\rho) \ge \rho - (\rho_0+1)$, we get
\begin{align*}
&\Delta \alpha + c_2 \alpha + c_2(\rho_0 + 1) 
\ge \Delta \alpha + c_2\varphi'(\rho)(\alpha+ \rho_0 +1)
\ge \Delta \alpha + c_2\varphi'(\rho)\rho \\
&\qquad \ge \varphi'(\rho)(\Delta \rho + c_2\rho) \ge 0.
\end{align*}
i.e. (\ref{eq:ellipticestimate}) holds for $c_0=c_2$, $c_1=c_2(\rho_0+1)$ on $D$.
Hence our goal is to show the existence of $c_2>0$ such that 
(\ref{eq:rho}) holds.

Applying $d\rho$ and $\lambda^s$ to (\ref{eq:floereq_v}), we get
\begin{align}
&\partial_s\rho+ c_{s,t}(\rho \lambda^s)(\partial_t v) - \rho \cdot \bigl(c_{s,t}+ \partial_s a^s(z) \cdot a^s(z)^{-1}\bigr)=0, \\
&c_{s,t}(\rho \lambda^s)(\partial_s v) - \partial_t \rho=0.
\end{align}
By these two equations, we get 
\begin{align*}
\Delta \rho &= c_{s,t}d(\rho \lambda^s)(\partial_t v,\partial_s v)+
\partial_s\rho\cdot\bigl(c_{s,t}+\partial_s a^s(z)\cdot a^s(z)^{-1}\bigr) \\
&+\rho \cdot \Bigl( 
\partial_s\bigl(c_{s,t}+\partial_s a^s(z)\cdot a^s(z)^{-1}\bigr)
-c_{s,t}\cdot\partial_s \lambda^s(\partial_t z)
+\partial_t c_{s,t} \cdot \lambda^s(\partial_s z)
-\partial_s c_{s,t} \cdot \lambda^s(\partial_t z)\Bigr).
\end{align*}
On the other hand, by (\ref{eq:floereq_v}), 
\[
d(\rho \lambda^s)(\partial_t v,\partial_s v) =|\partial_s v|_{s,t}^2 -c_{s,t}^{-1}\cdot \partial_s \rho \cdot\bigl(c_{s,t}+ \partial_s a^s(z)\cdot a^s(z)^{-1}\bigr).
\]
Then, we get 
\[
\Delta \rho= c_{s,t}|\partial_s v|_{s,t}^2
+\rho \cdot \Bigl( 
 \partial_s\bigl(c_{s,t}+\partial_s a^s(z)\cdot a^s(z)^{-1}\bigr)
-c_{s,t}\cdot \partial_s \lambda^s (\partial_t z)
+\partial_t c_{s,t}\cdot\lambda^s(\partial_s z)
-\partial_s c_{s,t}\cdot\lambda^s(\partial_t z)\Bigr).
\]
For $V \in T\bigl(\partial M \times (0,\infty)\bigr)$, we denote its $T(\partial M)$-part by $(V)_{\partial M}$. 
On $\Phi_s^{-1}(\hat{M} \setminus K)$, 
$T(\partial M)$ and $\partial_\rho$ are orthogonal to each other with respect to 
$\langle\,\cdot\,,\,\cdot\,\rangle_{s,t}$. Hence
$|(V)_{\partial M}|_{s,t} \le |V|_{s,t}$ for any $V$.
Then, we get (recall Lemma \ref{lem:metric}):
\begin{align*}
|\partial_s z|_{s,t,\partial M} &= \rho^{-1/2}\big\lvert (\partial_s v)_{\partial M} \big\rvert_{s,t} \le \rho^{-1/2}|\partial_s v|_{s,t},\\
|\partial_t z|_{s,t,\partial M} &= \rho^{-1/2}\big\lvert (\partial_t v)_{\partial M} \big\rvert_{s,t} \le \rho^{-1/2}|\partial_t v|_{s,t}
\le \rho^{-1/2}|\partial_s v|_{s,t} + c_{s,t}^{-1/2}\bigl(c_{s,t}+\partial_s a^s(z) \cdot a^s(z)^{-1} \bigr).
\end{align*}
In the last inequality, we use (\ref{eq:floereq_v}) and $\big\lvert \partial_\rho \big\rvert_{s,t} = (\rho c_{s,t})^{-1/2}$.
On the other hand, there exist constants $c_3, c_4, c_5>0$, which are independent of $u$ and satisfy
\[
\big\lvert \partial_s\bigl( \partial_s a^s(z)\cdot a^s(z)^{-1} \bigr)\big\rvert \le c_3|\partial_s z|_{s,t,\partial M} + c_4, \qquad
\big\lvert \partial_s\lambda^s (\partial_t z) \big\rvert \le c_5 |\partial_t z|_{s,t,\partial M}.
\]
Hence there exist constants $c_6, c_7>0$, which are independent of $u$ and satisfy
\[
\Delta \rho \ge c_{s,t}|\partial_s v|_{s,t}^2 -c_6 \rho^{1/2}|\partial_s v|_{s,t} - c_7 \rho.
\]
Therefore
\[
\Delta \rho \ge c_{s,t}|\partial_s v|_{s,t}^2 - \biggl(\frac{c_{s,t}|\partial_s v|_{s,t}^2}{2} + \frac{c_{s,t}^{-1}c_6^2\rho}{2}\biggr) - c_7\rho \ge -\biggl(\frac{c_{s,t}^{-1}c_6^2}{2} + c_7\biggr) \rho.
\]
Hence (\ref{eq:rho}) holds when $c_2 \ge \frac{\sup_{s,t} c_{s,t}^{-1} \cdot c_6^2}{2} + c_7$.
This completes the proof of Lemma \ref{lem:ellipticestimate}.

\subsection{Proof of Lemma \ref{lem:sequence}}

First note that we may replace $H^s$ with $H^s+C(s)$, where $C$ is an arbitrary smooth function on $s$. This is because 
$X_{H^s} \equiv X_{H^s+C(s)}$ for any $s$.
Therefore, we may assume that $H^s$ satisfies $\partial_sH^s(x) \ge 0$ for any $s \in \R$, $x \in \hat{M}$.

Let $u \in \mca{M}_{(H^s,J^s_t)}(x_-, x_+)$. Recall Lemma \ref{lem:energyformula}:
\[
\partial_s\bigl( \mca{A}_{H^s}\bigl(u(s)\bigr)\bigr)
=-\int_0^1 \big\lvert\partial_s u(s,t) \big\rvert_{J^s_t}^2+\partial_sH^s\bigl(u(s,t)\bigr) dt \le 0.
\]
In particular, 
\[
\mca{A}_{H'}(x_+) \le \mca{A}_{H^s}\bigl(u(s)\bigr) \le \mca{A}_H(x_-) 
\]
for any $s$.
Hence, for any interval $I \subset \R$, there exists $s \in I$ such that
\[
|I| \cdot \int_0^1 \big\lvert \partial_s u(s,t) \big\rvert_{J^s_t}^2 + \partial_s H^s\bigl(u(s,t)\bigr) dt \le
\mca{A}_H(x_-) - \mca{A}_{H'}(x_+).
\]
Hence, we can conclude:

\begin{lem}\label{lem:sequence1}
For any $\delta>0$ and $u \in \hat{\mca{M}}_{(H^s,J^s_t)}(x_-,x_+)$, 
there exists a sequence $(s_k)_{k \in \Z}$ with following properties:
\begin{enumerate}
\item $0 < s_{k+1}-s_k < \delta$ for any $k$.
\item $\int_0^1 \big| \partial_s u(s,t) \big|_{J^s_t}^2 + \partial_s H^s\bigl(u(s,t)\bigr) dt
\le \frac{2\bigl(\mca{A}_H(x_-)-\mca{A}_{H'}(x_+)\bigr)}{\delta}$ for any $k$.
\end{enumerate}
\end{lem}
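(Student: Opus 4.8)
The statement follows at once from the inequality displayed in the paragraph just above it, so the plan is purely combinatorial: apply that inequality along a fixed partition of $\R$ into short intervals.

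First I would recall what the preceding paragraph has already produced. By lemma \ref{lem:energyformula} the function $s \mapsto \mca{A}_{H^s}(u(s))$ is strictly decreasing, with limits $\mca{A}_H(x_-)$ and $\mca{A}_{H'}(x_+)$ as $s \to -\infty$ and $s \to +\infty$ respectively, so integrating the energy identity over any bounded interval $I \subset \R$ gives $\int_I \bigl( \int_0^1 \lvert \partial_s u \rvert_{J^s_t}^2 + \partial_s H^s(u)\, dt \bigr)\, ds < \mca{A}_H(x_-) - \mca{A}_{H'}(x_+)$, a bound independent of $I$. Hence, as already noted there, for each such $I$ there is a point $s \in I$ with $\int_0^1 \lvert \partial_s u(s,t) \rvert_{J^s_t}^2 + \partial_s H^s(u(s,t))\, dt < \bigl(\mca{A}_H(x_-) - \mca{A}_{H'}(x_+)\bigr) / |I|$.

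Next I would fix the partition $I_k := [\,k\delta/2,\ (k+1)\delta/2\,]$ for $k \in \Z$, so that $|I_k| = \delta/2$, and for each $k$ pick a point $s_k \in I_k$ furnished by the previous step; then property (2) holds with right-hand side $2\bigl(\mca{A}_H(x_-) - \mca{A}_{H'}(x_+)\bigr)/\delta$. For property (1), since $s_k \in [k\delta/2,(k+1)\delta/2]$ and $s_{k+1} \in [(k+1)\delta/2,(k+2)\delta/2]$ one gets both $s_{k+1} - s_k > 0$ and $s_{k+1} - s_k < \delta$, which completes the argument.

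There is no genuine obstacle here; this is a bookkeeping step built on the energy identity. The only point worth a moment's thought is the choice of interval length: it must be strictly below $\delta$ so that consecutive $s_k$ stay within distance $\delta$ of each other, and the convenient choice $\delta/2$ is precisely what accounts for the factor $2$ appearing in the stated bound.
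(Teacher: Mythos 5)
Your proof is correct and matches the paper's own argument: the paper records the averaging estimate (for any interval $I$ there is $s \in I$ with $|I| \cdot \int_0^1 (\cdot)\,dt < \mca{A}_H(x_-) - \mca{A}_{H'}(x_+)$) and then simply writes ``Hence, we can conclude,'' leaving the bookkeeping implicit; your partition into intervals of length $\delta/2$ is exactly that omitted step and correctly accounts for the factor $2$ in the stated bound. The only microscopic point to tidy is that with closed intervals $I_k$ the two strict inequalities in (1) could degenerate if $s_k$ and $s_{k+1}$ were both chosen at the shared endpoint; this is avoided automatically because the averaging argument actually produces a positive-measure set of admissible $s$ in each $I_k$, so one may always take $s_k$ in the open interval.
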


Note that $|\partial_s u|_{J^s_t} =|\partial_t u - X_{H^s} \circ u|_{J^s_t}$.
Therefore, to prove Lemma \ref{lem:sequence}, it is sufficient to prove the following:

\begin{lem}\label{lem:sequence2}
For any $c>0$, there exists $M(c)>0$ such that:
if $s \in \R$ and 
$x \colon [0,1] \to \hat{M}$ satisfy $x(0), x(1) \in \hat{L}$ and
\[
\int_0^1 \big\lvert \partial_t x - X_{H^s}\bigl(x(t)\bigr) \big\rvert_{J^s_t}^2 + \partial_s H^s \bigl(x(t)\bigr)  dt
\le c,
\]
then $\sup_{0 \le t \le 1} \varphi^s\bigl(x(t)\bigr) \le M(c)$.
\end{lem}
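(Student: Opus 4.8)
The plan is to argue by contradiction. If the assertion fails for some $c>0$, there are $s_n\in\R$ and paths $x_n\colon[0,1]\to\hat{M}$ with $x_n(0),x_n(1)\in\hat{L}$, with $\int_0^1|\partial_tx_n-X_{H^{s_n}}(x_n)|^2_{J^{s_n}_t}+\partial_sH^{s_n}(x_n)\,dt\le c$, but with $P_n:=\sup_t\varphi^{s_n}(x_n(t))\to\infty$. The first ingredient is a pointwise bound showing that $x_n$ cannot move quickly in the radial direction of the cylindrical end. On $\hat{M}\setminus K$, a computation of the same type as in the proof of Lemma \ref{lem:boundary} shows that in the coordinates $\Phi_{a^{s}}$ the Hamiltonian $H^{s}$ has the form $\rho+b_{H^{s}}$, so that $X_{H^{s}}=R^{s}$ and in particular $d\rho(X_{H^{s}})=0$. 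Hence along any path $x$ contained in $\{\rho>\rho_0\}$ one has $\partial_t(\rho\circ x)=d\rho(\partial_tx-X_{H^{s}}(x))$, and combining $d\rho=|\partial_\rho|^{-2}\langle\partial_\rho,\cdot\rangle$ (Lemma \ref{lem:metric}(1)) with Lemma \ref{lem:metric}(3) gives $|\partial_t\sqrt{\rho\circ x}|\le\tfrac12(\sup_{s,t}c_{J^s_t})^{1/2}\,|\partial_tx-X_{H^{s}}(x)|_{J^s_t}$. By Cauchy--Schwarz and the energy bound, $\sqrt{\rho\circ x}$ has total variation $\le C\sqrt c$ on every subinterval on which $x$ stays in $\{\rho>\rho_0\}$, with $C$ depending only on $\sup_{s,t}c_{J^s_t}$.

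Apply this to $x_n$. Choosing $t_n^*$ realizing $P_n$, the point $x_n(t_n^*)$ lies in the cylindrical end with $\rho^{s_n}(x_n(t_n^*))\to\infty$; the total-variation bound then forces, for $n$ large, the whole path $x_n$ to lie in the cylindrical end with $\inf_t\rho^{s_n}(x_n(t))\to\infty$ (otherwise the component of $\{t:x_n(t)\in\{\rho^{s_n}>\rho_0\}\}$ containing $t_n^*$ would reach the level $\rho_0$, producing oscillation $\to\infty$). Writing $x_n(t)=(z_n(t),\rho_n(t))$ in $\Phi_{a^{s_n}}$-coordinates, we get $z_n(0),z_n(1)\in\partial L$, and Lemma \ref{lem:metric}(2) gives $|\partial_tx_n-X_{H^{s_n}}(x_n)|^2_{J^{s_n}_t}\ge\rho_n(t)\,|\dot z_n-R^{s_n}(z_n)|^2_{s_n,t,\partial M}$, hence $\int_0^1|\dot z_n-R^{s_n}(z_n)|^2_{\partial M}\,dt\le c/\inf_t\rho_n(t)\to0$. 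Since $\partial M$ is compact and the metrics $|\cdot|_{\partial M}$ are uniformly equivalent for the relevant $(s,t)$ (the family $J^s_t$ is independent of $s$ for $|s|$ large), a subsequence of $z_n$ converges in $C^0$, weakly in $W^{1,2}$, to some $z_\infty$ with $\dot z_\infty=R^{s_\infty}(z_\infty)$ and $z_\infty(0),z_\infty(1)\in\partial L$, where $s_\infty:=\lim s_n\in[-\infty,\infty]$; as Reeb fields have no zeros, $z_\infty$ is non-constant, so it is a Reeb chord of $\partial L$ of action $1$ in the contact manifold $(\partial M,\lambda^{s_\infty})$, where $\lambda^{s_\infty}$ is $a_{H'}^{-1}\lambda$ if $s_\infty\ge s_0$, $a_H^{-1}\lambda$ if $s_\infty\le-s_0$, and $(a^{s_\infty})^{-1}\lambda$ otherwise.

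It remains to reach a contradiction. If $s_\infty\ge s_0$ then $1\in\mca{A}(\partial M,a_{H'}^{-1}\lambda,\partial L)$, contradicting admissibility of $H'$; symmetrically $s_\infty\le-s_0$ contradicts admissibility of $H$. If $s_\infty\in(-s_0,s_0)$ this is no contradiction, and here one uses the $\partial_sH^s$ term: at a point of the cylindrical end at level $\rho$ over $z\in\partial M$ one computes $\partial_sH^s=\rho\,\partial_s(\log a^s)(z)+\partial_sb_{H^s}$, and since $\partial_sa^{s_\infty}>0$ on the compact manifold $\partial M$ there is $\mu>0$ with $\partial_s(\log a^{s_\infty})\ge\mu$; as $z_n\to z_\infty$ uniformly and $s_n\to s_\infty$, for $n$ large $\partial_sH^{s_n}(x_n(t))\ge\tfrac{\mu}{2}\rho_n(t)-O(1)\to\infty$ uniformly in $t$, so $\int_0^1\partial_sH^{s_n}(x_n(t))\,dt\to\infty$, contradicting the energy bound. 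The main obstacle is exactly keeping the estimate uniform as $s\to\pm s_0$, where $\partial_sa^s$ degenerates and the $\partial_sH^s$-argument collapses; the resolution is that there $H^s$ limits onto the \emph{admissible} Hamiltonian $H$ or $H'$, so uniformity is recovered not from a quantitative estimate but from the compactness argument, which produces a genuine (not merely approximate) forbidden Reeb chord in the limit.
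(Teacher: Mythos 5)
Your proof is correct and takes essentially the same route as the paper: argue by contradiction, use the energy bound together with $\lvert\partial_\rho\rvert_{a,J}=(\rho c_J)^{-1/2}$ to bound the total variation of $\sqrt{\rho\circ x_n}$ (forcing the whole path into $\{\rho\to\infty\}$), rescale the transverse component to extract a Reeb chord of $\partial L$ of action $1$ in $(\partial M,\lambda^{s_\infty})$, exclude $\lvert s_\infty\rvert\ge s_0$ by admissibility of $H,H'$, and for $s_\infty\in(-s_0,s_0)$ use the strict positivity of $\partial_s a^{s_\infty}$ to show the $\partial_s H^{s}$ term blows up, contradicting the energy bound. The only cosmetic differences are that the paper first reduces to $s_k\in[-s_0,s_0]$ before passing to a limit rather than allowing $s_\infty=\pm\infty$, and writes the total-variation step as an explicit Cauchy--Schwarz lower bound on $\int(\rho^{-1/2}\partial_t\rho)^2$; these are the same estimates in different notation.
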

\begin{proof}
If this lemma does not hold, there exist sequences $(s_k)_k$ and $(x_k)_k$ such that
\begin{align}
&\label{eq:bound0}\int_0^1 \big\lvert \partial_t x_k - X_{H^{s_k}}\bigl(x_k(t)\bigr) \big\rvert_{J^{s_k}_t}^2 +\partial_s H^s\bigl(s_k,x_k(t)\bigr) dt
\le c, \\
&\label{eq:infty}\lim_{k \to \infty} \sup_{0 \le t \le 1} \varphi^{s_k}\bigl(x_k(t)\bigr)= \infty.
\end{align}
Recall that in the statement of Theorem \ref{thm:c0estimate}, we take $s_0>0$ such that
\[
J^s_t = \begin{cases}
        J^{-s_0}_t &(s \le -s_0), \\
        J^{s_0}_t  &(s \ge s_0).
        \end{cases}
\]
By replacing $s_0$ if necessary, we may assume that $s_0$ also satisfies
$H^s = \begin{cases}
        H &(s \le -s_0) \\
        H'&(s \ge s_0)
       \end{cases}$.
Then, we may assume that $s_k \in [-s_0, s_0]$ for all $k$.
Note that (\ref{eq:bound0}) implies
\begin{align}
&\label{eq:bound1} \int_0^1 \big\lvert \partial_t x_k - X_{H^{s_k}}\bigl(x_k(t)\bigr) \big\rvert_{J^{s_k}_t}^2  dt \le c, \\
&\label{eq:bound2} \int_0^1 \partial_s H^s\bigl(s_k,x_k(t)\bigr) dt \le c.
\end{align}

First we show that $\lim_{k \to \infty} \inf_{0 \le t \le 1}\varphi^{s_k}\bigl(x_k(t)\bigr) = \infty$.
If this does not hold, by replacing $(s_k)_k$ and $(x_k)_k$ to their subsequences, we may assume that
$\sup_k \inf_{0 \le t \le 1}\varphi^{s_k}\bigl(x_k(t)\bigr) < \infty$.
Then, for sufficiently large $k$, there exist $a_k, b_k \in [0,1]$ such that
\begin{align*}
&\sup_k \varphi^{s_k}\bigl(x_k(a_k)\bigr) < \infty,  \\
&\lim_{k \to \infty} \varphi^{s_k}\bigl(x_k(b_k)\bigr) = \infty, \\
&0 \le \theta \le 1 \implies x_k\bigl(\theta a_k + (1-\theta)b_k \bigr) \subset \hat{M} \setminus K.
\end{align*}
Without loss of generality, we may assume that $a_k \le b_k$.
Define $y_k: [a_k,b_k] \to \partial M \times (0,\infty)$, 
$z_k: [a_k,b_k] \to \partial M$, $\rho_k: [a_k,b_k] \to (0,\infty)$ by
\[
y_k(t):=(\Phi_{s_k})^{-1}(x_k(t)), \qquad \bigl(z_k(t),\rho_k(t)\bigr):=y_k(t).
\]
Then 
\begin{align*}
&\int_{a_k}^{b_k} \big\lvert \partial_t x_k - X_{H^{s_k}}\bigl(x_k(t)\bigr) \big\rvert_{J^{s_k}_t}^2 dt
=\int_{a_k}^{b_k} \big\lvert \partial_t y_k - \overline{R^{s_k}}(y_k(t)) \big\rvert_{s_k,t}^2 dt
\ge\int_{a_k}^{b_k} \big\lvert \bigl(\partial_t y_k \bigr)_{\partial_\rho} \big\rvert_{s_k,t}^2 dt  \\
&\quad\ge \inf_{s,t} c_{s,t}^{-1} \int_{a_k}^{b_k} \bigl(\rho_k(t)^{-\frac{1}{2}}\cdot \partial_t\rho_k\bigr)^2 dt
\ge \inf_{s,t} c_{s,t}^{-1} \cdot 4 \bigl( \rho_k(b_k)^{\frac{1}{2}} - \rho_k(a_k)^{\frac{1}{2}} \bigr)^2 \cdot (b_k-a_k)^{-1}.
\end{align*}
Since $\rho_k(a_k)$ is bounded and $\lim_{k \to \infty} \rho_k(b_k)=\infty$, we get 
\[
\lim_{k \to \infty} \int_{a_k}^{b_k} \big\lvert \partial_t x_k- X_{H^{s_k}}\bigl(x_k(t)\bigr) \big\rvert_{J^{s_k}_t}^2 dt
= \infty.
\]
This contradicts (\ref{eq:bound1}), 
and we have shown that $\lim_{k \to \infty}\inf_{0 \le t \le 1}\varphi^{s_k}\bigl(x_k(t)\bigr)= \infty$.
In particular, $x_k\bigl([0,1]\bigr) \subset \hat{M} \setminus K$ for sufficiently large $k$.
For such $k$, define $y_k: [0,1] \to \partial M \times (0,\infty)$, 
$z_k: [0,1] \to \partial M$, $\rho_k \colon [0,1] \to (0,\infty)$ by
\[
y_k(t):=(\Phi_{s_k})^{-1}\bigl(x_k(t)\bigr),\qquad \bigl(z_k(t),\rho_k(t)\bigr):=y_k(t).
\]
Then, by (\ref{eq:bound1}) and (\ref{eq:bound2}), $y_k$ satisfies 
\begin{align}
&\label{eq:bound1'}\int_0^1 \big\lvert \partial_t y_k- \overline{R^{s_k}} \bigl(y_k(t)\bigr) \big\rvert_{s_k,t}^2 dt \le c, \\
&\label{eq:bound2'}\int_0^1 \partial_s a^s \bigl(s_k,z_k(t)\bigr) \cdot a^{s_k}\bigl(z_k(t)\bigr)^{-1} \cdot \rho_k(t) dt +\partial_s b(s_k) \le c.
\end{align}
Since $\lim_{k \to \infty}\inf_{0 \le t \le 1}\varphi^{s_k}\bigl(x_k(t)\bigr)= \infty$, we get 
$\lim_{k \to \infty} \inf_{0 \le t \le 1} \rho_k(t) = \infty$.

By replacing $(x_k)_k$ and $(s_k)_k$ to their subsequences, we may assume that $(s_k)_k$ converges to some $s_\infty \in [-s_0,s_0]$.
Since
\[
\Big\lvert \partial_t y_k - \overline{R^{s_k}}\bigl(y_k(t)\bigr) \Big\rvert_{s_k,t}^2
\ge \Big\lvert \bigl(\partial_t y_k- \overline{R^{s_k}}\bigl(y_k(t)\bigr)\bigr)_{\partial M} \Big\rvert_{s_k,t}^2
=\Big\lvert \partial_t z_k - R^{s_k}\bigl(z_k(t)\bigr) \Big\rvert_{s_k,t,\partial M}^2 \cdot \rho_k(t),
\]
we get from (\ref{eq:bound1'}) and $\lim_{k \to \infty} \inf_{0 \le t \le 1} \rho_k(t) = \infty$ that 
\[
\lim_{k \to \infty} \int_0^1 \Big\lvert \partial_t z_k - R^{s_k}\bigl(z_k(t)\bigr) \Big\rvert_{s_k,t,\partial M}^2  dt
=0.
\]
Then, by taking limit of certain subsequence of $(z_k)_k$, we get $z_\infty: [0,1] \to \partial M$ such that
\[
z_\infty(0), z_\infty(1) \in \partial L, \qquad \partial_t z_\infty(t)= R^{s_\infty}\bigl(z_\infty(t)\bigr).
\]
Therefore $1 \in \mca{A}(\partial M, \lambda^{s_\infty}, \partial L)$, 
hence $s_\infty \in (-s_0, s_0)$.
By the definition of monotone homotopy, 
$\inf_{z \in \partial M} \partial_s a^s(s_\infty,z)>0$.
Hence, there exists $\varepsilon>0$ such that
$\inf_{z \in \partial M} \partial_s a^s(s_k,z) \ge \varepsilon$ for sufficiently large $k$.
Let $A:= \sup_{(s,z) \in \R \times \partial M} a^s(z)$. Then, 
\[
\int_0^1 \partial_s a^s\bigl(s_k,z_k(t)\bigr)\cdot a^{s_k}\bigl(z_k(t)\bigr)^{-1} \cdot \rho_k(t) dt 
\ge \varepsilon A^{-1} \int_0^1 \rho_k(t) dt
\]
for sufficiently large $k$.
Since $\lim_{k \to \infty} \inf_{0 \le t \le 1} \rho_k(t) = \infty$, the
right hand side of the above inequality goes to $\infty$ as $k \to \infty$.
Hence the left hand side of the above inequality also goes to $\infty$ as $k \to \infty$.
This contradicts (\ref{eq:bound2'}). 
This completes the proof of Lemma \ref{lem:sequence2}.
\end{proof}

\subsection{Proof of Lemma \ref{lem:mponstrip}}

We use the following result, which is exactly the same as Proposition 8 in \cite{FloerHofer}.

\begin{lem}\label{lem:maximumprinciple}
Assume that $a, b, \lambda \ge 0$ and $\delta>0$ are given such that $\delta^2\lambda<\pi^2$.
Then, there exists $C(a, b, \lambda, \delta)>0$ such that, 
if a closed interval $I$ satisfies $0<|I| \le \delta$ and 
a smooth function $\alpha \colon I \times \R/\Z \to \R$ satisfies 
\begin{align*}
&\Delta \alpha + \lambda \alpha + a \ge 0, \\
&\sup\bigl\{ \alpha(s,t) \bigm{|} s \in \partial I\bigr\} \le b,
\end{align*}
then, $\sup \alpha \le C(a, b, \lambda, \delta)$.
\end{lem}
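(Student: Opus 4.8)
The plan is to reduce the statement to the classical weak maximum principle for the operator $\Delta+\lambda$ on the \emph{narrow} flat cylinder $C:=I\times\R/\Z$; the point is that the hypothesis $\delta^2\lambda<\pi^2$ forces $\lambda$ to lie strictly below the first Dirichlet eigenvalue of $-\Delta$ on $C$, which is $(\pi/|I|)^2\ge(\pi/\delta)^2$, so that this operator does obey a maximum principle even though $\lambda>0$. Since all of the hypotheses are invariant under translation in the $s$-variable, I would first normalize $I=[-r,r]$ with $2r=|I|\le\delta$.

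Next I would produce an explicit one-variable comparison function $\gamma\colon[-r,r]\to\R$, viewed as a $t$-independent function on $C$, solving $\gamma''+\lambda\gamma+a=0$ with $\gamma(\pm r)=b$. For $\lambda>0$ one takes $\gamma(s)=\frac{b+a/\lambda}{\cos(\sqrt{\lambda}\,r)}\cos(\sqrt{\lambda}\,s)-\frac{a}{\lambda}$; the narrowness hypothesis gives $\sqrt{\lambda}\,r\le\sqrt{\lambda}\,\delta/2<\pi/2$, so $\cos(\sqrt{\lambda}\,s)\in[\cos(\sqrt{\lambda}\,r),1]\subset(0,1]$ for $s\in[-r,r]$, whence $\gamma$ is well defined, $\gamma\ge b\ge0$, and $\sup_{[-r,r]}\gamma=\gamma(0)=\frac{b+a/\lambda}{\cos(\sqrt{\lambda}\,r)}-\frac{a}{\lambda}\le\frac{b+a/\lambda}{\cos(\sqrt{\lambda}\,\delta/2)}-\frac{a}{\lambda}=:C(a,b,\lambda,\delta)$, a quantity depending only on $a,b,\lambda,\delta$. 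For $\lambda=0$ one simply replaces $\gamma$ by $b+\tfrac{a}{2}(r^2-s^2)$, with $\sup\gamma\le b+a\delta^2/8$.

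Then I set $w:=\alpha-\gamma$ on $C$. Since $\gamma$ is independent of $t$ we have $\Delta\gamma=\gamma''$, so hypothesis (2) and the choice of $\gamma$ give $\Delta w+\lambda w=(\Delta\alpha+\lambda\alpha+a)-(\gamma''+\lambda\gamma+a)\ge0$ on $C$, while hypothesis (3) gives $w\le0$ on $\partial C=\partial I\times\R/\Z$. It remains to deduce $w\le0$ on all of $C$, for then $\sup\alpha\le\sup\gamma\le C(a,b,\lambda,\delta)$, as desired. To prove $w\le0$ I would test the differential inequality against $w^+:=\max(w,0)$, which is Lipschitz, lies in $W^{1,2}(C)$, and vanishes on $\partial C$: integrating by parts over $C$ (the boundary term drops since $w^+|_{\partial C}=0$) gives $\int_C|\nabla w^+|^2=-\int_C(\Delta w)\,w^+\le\lambda\int_C w\,w^+=\lambda\int_C(w^+)^2$. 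On the other hand the Poincaré inequality on $C$ for functions vanishing on $\partial C$ reads $\int_C|\nabla f|^2\ge(\pi/|I|)^2\int_C f^2$, the constant being the lowest Dirichlet eigenvalue of $-\Delta$ on $C$, realized by the $t$-independent mode $\sin(\pi(s+r)/|I|)$. Since $(\pi/|I|)^2\ge(\pi/\delta)^2>\lambda$ by hypothesis, the two inequalities together force $\int_C(w^+)^2=0$, i.e. $w\le0$.

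The only genuinely delicate step is this last maximum principle: for $\lambda>0$ the operator $\Delta+\lambda$ does \emph{not} satisfy the naive maximum principle on an arbitrary domain—its Dirichlet eigenfunctions furnish counterexamples—so one must exploit the narrowness of the strip to beat the zeroth eigenvalue, and this is exactly where $\delta^2\lambda<\pi^2$ is used, both to make the comparison cosine positive and to get the spectral-gap estimate. Everything else is a routine ODE computation plus one integration by parts; the result coincides with Proposition 8 of \cite{FloerHofer}, and alternatively one could quote the standard fact that $\Delta+c$ obeys the maximum principle on any domain admitting a positive supersolution, the function $\gamma$ constructed above being precisely such a supersolution.
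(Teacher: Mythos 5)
Your proof is correct. Note that the paper itself gives no argument for this lemma: it is quoted verbatim as Proposition 8 of \cite{FloerHofer}, so you are supplying a proof where the paper only supplies a citation. Your two ingredients both check out. The barrier $\gamma(s)=\frac{b+a/\lambda}{\cos(\sqrt{\lambda}\,r)}\cos(\sqrt{\lambda}\,s)-\frac{a}{\lambda}$ indeed solves $\gamma''+\lambda\gamma+a=0$ with $\gamma(\pm r)=b$ and $\gamma\ge b$, and is well defined precisely because $\sqrt{\lambda}\,r<\pi/2$, which is where $\delta^2\lambda<\pi^2$ enters the first time; the monotonicity of $1/\cos$ gives a bound depending only on $(a,b,\lambda,\delta)$ and not on $|I|$. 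The second ingredient, the weak maximum principle for $\Delta+\lambda$ on the narrow cylinder, is handled correctly: testing $\Delta w+\lambda w\ge0$ against $w^+\in W^{1,2}_0$ and invoking the Poincar\'e inequality with the sharp constant $(\pi/|I|)^2$ (the lowest Dirichlet--periodic eigenvalue, attained by the $t$-independent sine mode) forces $w^+\equiv0$ since $(\pi/|I|)^2>\lambda$; this is the second, independent use of the hypothesis $\delta^2\lambda<\pi^2$, and you are right to flag that without it the statement is false (Dirichlet eigenfunctions give counterexamples). The only point worth writing out in a final version is the justification of the integration by parts with the merely Lipschitz test function $w^+$, which is standard since $w$ is smooth and $w^+$ has zero trace. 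Your argument is essentially the classical "narrow domain" maximum principle and is in the same spirit as the Floer--Hofer original; as a self-contained alternative it is perfectly acceptable and arguably more transparent than chasing the reference.
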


\begin{rem}
For any $\tau>0$, Lemma \ref{lem:maximumprinciple} holds if we replace $\R/\Z$ with $\R/\tau \Z$
in the statement.
\end{rem}

\begin{proof}[\textbf{proof of Lemma \ref{lem:mponstrip}}:]
For any $\varepsilon>0$, there exists $\delta>0$ and $\beta \colon I \times [0,1] \to \R$ such that:
\begin{align*}
&\sup_{I \times [0,1]} |\alpha - \beta| , \sup_{I \times [0,1]} \big\lvert \Delta(\alpha- \beta) \big\rvert \le \varepsilon, \\
&1-\delta \le t \le 1 \implies \beta(s,t) = \alpha(s,1) + \partial_t^2\alpha(s,1)\cdot\frac{(t-1)^2}{2},\\
&0 \le t \le \delta \implies \beta(s,t)=\alpha(s,0) + \partial_t^2\alpha(s,0) \cdot \frac{t^2}{2}.
\end{align*}
Define $\overline{\beta} \colon I \times \R/2\Z \to \R$ by
\[
\overline{\beta}(s,t) = \begin{cases}
                                   \beta(s,t) &( 0 \le t \le 1),\\
                                   \beta(s,2-t) &(1 \le t \le 2).
                                   \end{cases}
\]
Then, $\overline{\beta} \in C^\infty(I \times \R/2\Z)$.
Moreover, $\overline{\beta}$ satisfies 
\[
\Delta \overline{\beta} + \lambda \overline{\beta} + \bigl(a+(1+\lambda)\varepsilon \bigr) \ge 0, \qquad
\sup\big\{\overline{\beta}(s,t)\bigm{|} s \in \partial I \bigr\} \le b+ \varepsilon.
\]
Then, if we take $C=C(a+(1+\lambda)\varepsilon, b+\varepsilon, \lambda, \delta)$ as in Lemma \ref{lem:maximumprinciple},
$\sup \beta = \sup \overline{\beta} \le C$. Hence
$\sup \alpha \le C+\varepsilon$.
\end{proof}

\subsection{Proof of Proposition \ref{prop:isotopyinvarianceofWFH}}

First, we may assume that 
$(\omega^s,X^s)=(\omega^0,X^0)$ if $s$ is sufficiently close to $0$, and
$(\omega^s,X^s)=(\omega^1,X^1)$ if $s$ is sufficiently close to $1$.
Then, extend $(\omega^s,X^s)_{0 \le s \le 1}$ to $(\omega^s,X^s)_{s \in \R}$ by
\[
(\omega^s,X^s) = \begin{cases}
                 (\omega^0, X^0) &(s \le 0) \\
                 (\omega^1, X^1) &(s \ge 1).
                 \end{cases}
\]

The crucial step in the proof of Proposition \ref{prop:isotopyinvarianceofWFH} is:

\begin{lem}\label{lem:deformation}
Let $H, H' \in \mca{H}_{\ad}(\hat{M})$ and $(H^s)_s$ be a monotone homotopy from $H$ to $H'$, such that
$a_{H^s}$ is a constant function on $\partial M$ for any $s$ ($a_{H^s} \equiv:a(s)$).
Let $(J^s_t)_{s,t}$ be a family of almost complex structures on $\hat{M}$ such that
\[
J^s_t=\begin{cases}
       J^0_t &(s \le 0) \\
       J^1_t &(s \ge 1)
       \end{cases}.
\]
Assume that there exists a compact set $M \subset K \subset \hat{M}$, such that 
$H^s \in \mca{H}_K(\hat{M})$, 
$J^s_t \in \mca{J}_{1,K}(\hat{M};\hat{\omega^s})$ for any $s$ and $t$ (here $1$ denotes the constant function 
on $\partial M$).
Then, there exist constants $c_0, c_1>0$, which depend only on $(\omega^s,X^s)_s$ and $(J^s_t)_{s,t}$, with following
property:
if $c_0a+c_1 \le \partial_s a$ on $[0,1]$, there exists a compact set $B \subset \hat{M}$ such 
that $u\bigl(\R \times [0,1]\bigr) \subset B$ for any $x_- \in \mca{C}(H)$, $x_+ \in \mca{C}(H')$, 
$u \in \hat{\mca{M}}_{(H^s,J^s_t)}(x_-,x_+)$.
\end{lem}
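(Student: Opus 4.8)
The plan is to rerun the proof of theorem~\ref{thm:c0estimate}(2), keeping track of the one genuinely new ingredient that enters because $\hat{\omega^s}$ (hence $\hat{\lambda^s}$ and $X_{H^s}$) now varies with $s$. Since $\mca{C}(H)$ and $\mca{C}(H')$ are finite it suffices to bound $u\bigl(\R\times[0,1]\bigr)$ for a fixed pair $x_-\in\mca{C}(H)$, $x_+\in\mca{C}(H')$ and all $u\in\hat{\mca{M}}_{(H^s,J^s_t)}(x_-,x_+)$. First I would fix $\rho_0>0$ so large that $\partial M\times[\rho_0,\infty)$ lies in $\hat{M}\setminus K$ and in the genuinely conical part of $(\hat{M},\hat{\omega^s})$ for every $s\in[0,1]$; this is possible since $(\omega^s,X^s)_s$ is constant for $s\le0$ and $s\ge1$, so the only $s$-variation of $\Cyl(\hat{M})$ occurs inside the compact set $M$. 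Choosing the cutoff $\varphi$ and the functions $\varphi^s$ exactly as in section~3.1, put $\alpha^u(s,t):=\varphi^s\bigl(u(s,t)\bigr)$. Since $a_{H^s}\equiv a(s)$ is constant and $J^s_t\in\mca{J}_{1,K}(\hat{M};\hat{\omega^s})$, on $\hat{M}\setminus K$ the metric $\langle\,\cdot\,,\,\cdot\,\rangle_{J^s_t}$ has the conical form of lemma~\ref{lem:metric} with contact datum $\lambda^s=i_{X^s}\omega^s$ (now genuinely $s$-dependent on $\partial M$), and the proof of lemma~\ref{lem:boundary} goes through word for word, giving $\partial_t\alpha^u=0$ on $\R\times\{0,1\}$.

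Next I would establish the analogues of lemmas~\ref{lem:ellipticestimate} and~\ref{lem:mponstrip}. Writing $v=(\Phi_s)^{-1}\circ u$ on $\{u\notin K\}$, one checks that on the cone $X_{H^s}$ is still the Reeb field $R^s$ of $a(s)^{-1}\lambda^s$, so the Floer equation in the chart is still equation~(\ref{eq:floereq_v}), except that $\xi^s$, $R^s$, $c_{s,t}$ now carry the $s$-dependence of $\omega^s$; in the subsequent computation of $\Delta\rho$ one therefore meets the extra $s$-derivatives $\partial_s\lambda^s$, $\partial_sR^s$, $\partial_sc_{s,t}$, which are bounded on the compact manifold $\partial M$ uniformly for $s\in[0,1]$, so the coefficients still grow at most linearly in $\rho$. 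Repeating section~3.2 then gives constants $c_0',c_1'>0$, depending only on $(\omega^s,X^s)_s$ and $(J^s_t)_{s,t}$, with $\Delta\alpha^u+c_0'\alpha^u+c_1'\ge0$. Lemma~\ref{lem:mponstrip} is a purely PDE statement and is used verbatim.

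The condition $c_0a(s)+c_1\le a'(s)$ is needed in the analogue of lemmas~\ref{lem:sequence} and~\ref{lem:sequence2}, because the action functional itself now depends on $s$. The energy identity of lemma~\ref{lem:energyformula} becomes
\[
-\partial_s\bigl(\mca{A}_{H^s}(u(s))\bigr)=\int_0^1\bigl|\partial_su(s,t)\bigr|^2_{J^s_t}+\partial_sH^s\bigl(u(s,t)\bigr)\,dt-\int_0^1(\partial_s\hat{\lambda^s})\bigl(\partial_tu(s,t)\bigr)\,dt,
\]
where the boundary terms drop out because $u(\R\times\{0,1\})\subset\hat{L}$ and $\hat{\lambda^s}|_{\hat{L}}=0$ for all $s$. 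On the cone $\hat{\lambda^s}=\rho\lambda^s$, so the last integral equals $\int_0^1\rho\,(\partial_s\lambda^s)(\partial_tz)\,dt$, whose absolute value is at most $C\rho\,|\partial_tz|_{\partial M}$ for a constant $C$ depending only on the two families; in the regime of lemma~\ref{lem:sequence2}, where the horizontal energy $\int_0^1|\partial_tz-R^s(z)|^2_{\partial M}\,dt$ is small, this is bounded by $C'\rho$ up to a term controlled by the horizontal energy. Meanwhile $\partial_sH^s=a'(s)\rho+b'(s)$ on the cone. One then checks that, for $c_0$ a suitable multiple of $C$ and $c_1$ chosen large accordingly (in terms of $C$, $\rho_0$ and $\sup_{s\in[0,1]}|b'(s)|$), the hypothesis $c_0a(s)+c_1\le a'(s)$ both keeps $-\partial_s(\mca{A}_{H^s}(u(s)))\le0$ where $u$ is deep in the cone — so that the action stays in $(\mca{A}_{H'}(x_+),\mca{A}_H(x_-))$ and the analogue of lemma~\ref{lem:sequence1} holds — and makes the monotonicity term $a'(s)\rho$ strictly dominate the deformation term there. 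Feeding these estimates into the bubbling-off argument of lemma~\ref{lem:sequence2}, a sequence of slices $u(s_k,\cdot)$ escaping to infinity would force the governing integral to be at least a positive multiple of $\int_0^1\rho_k(t)\,dt\to\infty$, a contradiction. Hence there are $c=c(x_-,x_+,\delta)>0$ and a sequence $(s_k)_k$ with $0<s_{k+1}-s_k<\delta$ and $\sup_t\alpha^u(s_k,t)\le c$.

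Finally, picking $\delta$ with $\delta^2c_0'<\pi^2$ and applying lemma~\ref{lem:mponstrip} to $\alpha^u$ on each strip $[s_k,s_{k+1}]\times[0,1]$, with $a=c_1'$, $b=c$, $\lambda=c_0'$, we obtain $\sup\alpha^u\le c(c_1',c,c_0',\delta)$, so $u(\R\times[0,1])\subset B$ for a fixed compact $B$. The main obstacle is the third paragraph: one must pin down the exact $\rho$-linear part of the deformation term $\int_0^1(\partial_s\hat{\lambda^s})(\partial_tu)\,dt$ and verify that the multiplicative constant $c_0$ and additive constant $c_1$ in the hypothesis are exactly what is needed both to keep the action bounded and to let the monotonicity term $a'(s)\rho$ beat that term far out along the cone — i.e.\ to ensure that steepness of the homotopy in the $a$-direction beats the $s$-derivative of the symplectic datum along the deformation.
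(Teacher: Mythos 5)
Your proposal is correct and follows essentially the same route as the paper: reduce to the analogues of lemmas \ref{lem:boundary}, \ref{lem:ellipticestimate}, \ref{lem:sequence}, identify the new term $-\int_0^1(\partial_s\hat{\lambda^s})(\partial_t u)\,dt$ in the energy identity, bound it on the cone by $\tfrac{1}{2}\lvert\partial_s u\rvert^2_{J^s_t}+(c_0'a(s)+c_1')\rho$, and use the hypothesis $c_0a(s)+c_1\le a'(s)$ so that $\partial_s H^s=a'(s)\rho+b'(s)$ absorbs it — exactly the paper's lemma \ref{lem:sequenceagain}. Only note that the inequality you want deep in the cone is $-\partial_s\bigl(\mca{A}_{H^s}(u(s))\bigr)\ge 0$ (action non-increasing), not $\le 0$, and that in fact one only obtains it up to a bounded additive error coming from the compact set $K$, which is all the slice-selection argument needs.
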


Once Lemma \ref{lem:deformation} is established, we can define a chain map
$\varphi^{(H^s,J^s_t)_{s,t}}_k: \WFC_k(H;M,\omega^0,X^0,L) \to \WFC_k(H';M,\omega^1,X^1,L)$ by
\[
\varphi^{(H^s,J^s_t)_{s,t}}_k[x] = \sum_{y \in \mca{C}_k(H')} \sharp \hat{\mca{M}}_{(H^s,J^s_t)}(x,y) \cdot [y],
\]
given a monotone homotopy $(H^s)_s$ which satisfies assumptions in Lemma \ref{lem:deformation}.
Hence we get a morphism $\WFH_k(H;M,\omega^0,X^0,L) \to \WFH_k(H';M,\omega^1,X^1,L)$.
By taking direct limit, we obtain a morphism
\[
\WFH_*(M,\omega^0,X^0,L) \to \WFH_*(M,\omega^1,X^1,L).
\]
We can also define a morphism in invert direction, and show that they are inverse to 
each other. This completes the proof of Proposition \ref{prop:isotopyinvarianceofWFH}.
Hence all we have to show is Lemma \ref{lem:deformation}.

The proof of Lemma \ref{lem:deformation} is very similar to the proof of Theorem \ref{thm:c0estimate}.
First we take $\rho_0>1$ so that $K \subset \interior M(\rho_0)$, 
take smooth function $\varphi:[1,\infty) \to \R$ such that
\begin{align*}
\varphi''(\rho)&\ge 0, \\
\varphi'(\rho)&=1\quad(\rho \ge \rho_0+1),\\
\varphi(\rho)&=0\quad(\rho \le \rho_0),
\end{align*}
and define $\alpha_u \in C^\infty\bigl(\R \times [0,1]\bigr)$ for $u \in \hat{\mca{M}}_{(H^s,J^s_t)}(x_-,x_+)$ by
\[
\alpha_u(s,t) = \begin{cases}
                 \varphi\bigl(\rho(s,t)\bigr) &\bigl(u(s,t) \in \partial M \times [\rho_0,\infty)\bigr) \\
                 0 &(\text{otherwise})
                \end{cases}.
\]
Once we establish properties which correspond to 
Lemmas  \ref{lem:boundary}, \ref{lem:ellipticestimate}, \ref{lem:sequence}
for $\alpha_u$, the proof completes.
The first two properties can be proved in completely same way.
But to establish the property which corresponds to Lemma \ref{lem:sequence}, we need somewhat different arguments.
In the following, we prove the property which corresponds to Lemma \ref{lem:sequence}.
First we spell out what we have to prove.

\begin{lem}\label{lem:sequenceagain}
Let $H$, $H'$, $(H^s)_s$ and $(J^s_t)_{s,t}$ are as in Lemma \ref{lem:deformation}.
Then, there exist constants $c_0, c_1 > 0$, which depend only on $(\omega^s,X^s)_s$ and
$(J^s_t)_{s,t}$, with following property:
\begin{quote}
Assume $c_0 a(s) + c_1 \le a'(s)$ 
for $s \in [0,1]$. Then, for any $x_- \in \mca{C}(H)$, $x_+ \in \mca{C}(H')$ and $\delta>0$, 
there exists $c(x_-,x_+,\delta)>0$ such that 
for any $u \in \hat{\mca{M}}_{(H^s,J^s_t)}(x_-,x_+)$,
there exists a sequence $(s_k)_{k \in \Z}$ with :
\begin{enumerate}
\item $0<s_{k+1}-s_k<\delta$ for any $k$.
\item $\sup_{0 \le t \le 1} \alpha_u(s_k,t) \le c(x_-,x_+,\delta)$ for any $k$.
\end{enumerate}
\end{quote}
\end{lem}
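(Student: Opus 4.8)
The plan is to follow the proof of Lemma~\ref{lem:sequence}, which proceeded in two stages: the energy identity (Lemma~\ref{lem:energyformula}) together with a pigeonhole argument (Lemma~\ref{lem:sequence1}) produced a $\delta$-dense sequence $(s_k)$ along which the local energy $\int_0^1|\partial_s u(s_k,t)|^2+\partial_s H^{s_k}(u(s_k,t))\,dt$ was bounded, and Lemma~\ref{lem:sequence2} upgraded such a bound to a $C^0$-bound on the slice $u(s_k,\cdot)$. In the deformation setting the completion form $\hat{\lambda^s}$ depends on $s$, so the energy identity picks up an extra term, and the point of Lemma~\ref{lem:sequenceagain} is that the hypothesis $c_0a(s)+c_1\le a'(s)$ renders this extra term harmless.

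First I would record the modified energy identity. Choosing a model completion on whose cylindrical end $\hat{\lambda^s}=\rho\,\mu^s$ for a smooth family $(\mu^s)$ of contact forms on $\partial M$ (constant for $s\notin[0,1]$), the computation of Lemma~\ref{lem:energyformula}, using $\hat{\lambda^s}|_{\hat L}=0$ so that the $t$-boundary terms vanish, gives
\[
-\partial_s\bigl(\mca{A}_{H^s}(u(s))\bigr)=\int_0^1\bigl|\partial_s u(s,t)\bigr|^2_{J^s_t}\,dt+\int_0^1\partial_s H^s\bigl(u(s,t)\bigr)\,dt-\int_0^1\bigl(\partial_s\hat{\lambda^s}\bigr)_{u(s,t)}\bigl(\partial_t u(s,t)\bigr)\,dt
\]
for $u\in\hat{\mca{M}}_{(H^s,J^s_t)}(x_-,x_+)$; call the last integral $E^s$, which vanishes for $s\notin[0,1]$. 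On the cylindrical end $\partial_s\hat{\lambda^s}=\rho\,\partial_s\mu^s$, while $X_{H^s}$ there is $a(s)$ times the Reeb field of $\mu^s$, hence tangent to the level sets $\{\rho=\mathrm{const}\}$ with $|X_{H^s}(u)|_{J^s_t}$ at most a constant times $a(s)\rho^{1/2}$ by Lemma~\ref{lem:metric}. Estimating the integrand of $E^s$ on the cylindrical end via $|\partial_t u|_{J^s_t}\le|\partial_s u|_{J^s_t}+|X_{H^s}(u)|_{J^s_t}$, Lemma~\ref{lem:metric}, and Young's inequality, and using $\rho\le\alpha_u+(\rho_0+1)$ on $\hat M\setminus K$ together with compactness of $K$, one obtains constants $c_0,c_0'>0$ depending only on $(\omega^s,X^s)_s$ and $(J^s_t)_{s,t}$ with
\[
|E^s|\le\frac12\int_0^1\bigl|\partial_s u(s,t)\bigr|^2_{J^s_t}\,dt+\int_0^1\bigl(c_0\,a(s)+c_0'\bigr)\alpha_u(s,t)\,dt+c_0'\qquad(s\in[0,1]).
\]
Since $a_{H^s}\equiv a(s)$ is constant in space, $\partial_s H^s$ on the cylindrical end equals $a'(s)\rho+b'(s)\ge a'(s)\alpha_u(s,t)+b'(s)$; hence, taking $c_1:=c_0'+1$, the hypothesis $c_0a(s)+c_1\le a'(s)$ provides a uniform gap, giving $-\partial_s(\mca{A}_{H^s}(u(s)))\ge\frac12\int_0^1|\partial_s u|^2\,dt+\int_0^1\alpha_u(s,t)\,dt-C$ on $[0,1]$ with $C$ independent of $u$, while $-\partial_s(\mca{A}_{H^s}(u(s)))\ge0$ for $s\notin[0,1]$ because $(H^s)_s$ is monotone.

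Now set $\tilde G_u\colon\R\to[0,\infty)$ equal to $-\partial_s(\mca{A}_{H^s}(u(s)))$ for $s\notin[0,1]$ and to $\frac12\int_0^1|\partial_s u(s,t)|^2\,dt+\int_0^1\alpha_u(s,t)\,dt$ for $s\in[0,1]$; the previous paragraph gives $\tilde G_u\ge0$ and $\int_\R\tilde G_u(s)\,ds\le\mca{A}_H(x_-)-\mca{A}_{H'}(x_+)+C=:C(x_-,x_+)$. Moreover there is a single increasing function $\tilde M$, depending only on the fixed data, with $\sup_{0\le t\le1}\alpha_u(s,t)\le\tilde M(\tilde G_u(s))$ for all $u$ and all $s$: for $s\notin[0,1]$ this is precisely Lemma~\ref{lem:sequence2}, whose hypothesis reads $\int_0^1|\partial_s u|^2+\partial_s H^s(u)\,dt=\tilde G_u(s)$; for $s\in[0,1]$ it follows from $\int_0^1\alpha_u(s,t)\,dt\le\tilde G_u(s)$ and $\int_0^1|\partial_s u(s,t)|^2\,dt\le2\tilde G_u(s)$ because, since $X_{H^s}$ is tangent to $\{\rho=\mathrm{const}\}$ on $\hat M\setminus K$, one has $\partial_t\rho=d\rho(\partial_t u-X_{H^s}(u))$ and $|d\rho(v)|\le(\rho\,c_{J^s_t})^{1/2}|v|_{J^s_t}$ by Lemma~\ref{lem:metric}, so $t\mapsto\rho(u(s,t))^{1/2}$ has controlled total variation on each sub-arc of $u(s,\cdot)$ in $\hat M\setminus K$, while the $L^1$-bound keeps $\inf_t\rho(u(s,t))$ bounded in terms of $\tilde G_u(s)$. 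Finally, given $\delta>0$ and $u$, picking in each interval $[\delta k/2,\delta(k+1)/2]$ an interior point $s_k$ with $\tilde G_u(s_k)\le(2/\delta)\int_{[\delta k/2,\delta(k+1)/2]}\tilde G_u\,ds\le(2/\delta)C(x_-,x_+)$ yields a sequence with $0<s_{k+1}-s_k<\delta$ and $\sup_{0\le t\le1}\alpha_u(s_k,t)\le\tilde M\bigl((2/\delta)C(x_-,x_+)\bigr)=:c(x_-,x_+,\delta)$, which is the assertion of Lemma~\ref{lem:sequenceagain}.

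The main obstacle is the second paragraph: proving that $E^s$ grows at most linearly in the height on the cylindrical end with coefficient $O(a(s))$. This is what singles out $c_0a(s)+c_1\le a'(s)$ as the natural hypothesis, since — and here it is essential that $a_{H^s}$ is constant in space — $a'(s)$ is exactly the coefficient with which the height enters $\partial_s H^s$ on the cylindrical end, so the hypothesis lets the good term dominate with a fixed margin. Everything else is a routine adaptation of Section~3.3; in fact on $[0,1]$ it is somewhat easier, the $L^1$-bound on $\alpha_u(s,\cdot)$ replacing the rescaling-to-a-Reeb-chord argument of Lemma~\ref{lem:sequence2}.
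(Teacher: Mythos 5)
Your proof is correct. The central estimate---$\lvert\partial_s\hat{\lambda^s}(\partial_t u)\rvert\le\tfrac12\lvert\partial_s u\rvert^2_{J^s_t}+(c_0a(s)+c_0')\rho$ on the cylindrical end, after which the hypothesis $c_0a(s)+c_1\le a'(s)$ lets $\partial_s H^s=a'(s)\rho+b'(s)$ dominate---is exactly the paper's, and so is the pigeonhole step of lemma \ref{lem:sequence}. Where you differ is in upgrading the resulting slice energy bound to a $C^0$-bound: the paper simply re-runs lemma \ref{lem:sequence2}'s indirect compactness argument (blow-up to a Reeb chord, ruled out by admissibility and $\partial_s a^s>0$) for every $s$, while you invoke lemma \ref{lem:sequence2} only on the tails $s\notin[0,1]$ and, on $[0,1]$, substitute a more elementary device, noting that the gap $a'(s)-c_0a(s)-c_0'\ge 1$ turns the pigeonhole into an $L^1$-bound on $\alpha_u(s,\cdot)$ along the good slices and upgrading it to a pointwise bound via a total-variation estimate on $t\mapsto\rho^{1/2}\bigl(u(s,t)\bigr)$, using $\lvert\partial_t\rho^{1/2}\rvert\le\tfrac12 c_{s,t}^{1/2}\lvert\partial_s u\rvert_{J^s_t}$ from lemma \ref{lem:metric}, the Floer equation, and Cauchy--Schwarz. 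Your route on $[0,1]$ is shorter and quietly sidesteps the (small but real) adaptation one would otherwise need to run lemma \ref{lem:sequence2}'s limiting argument with the $s$-varying contact form $\lambda_s$; the tradeoff is that it only works where the gap hypothesis is in force, so you still need lemma \ref{lem:sequence2} on the tails, where $\lambda_s$ is constant and the lemma applies as written.
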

\begin{proof}
At first, by same arguments as what we have done in the beginning of section 3.3, 
we may assume that $\partial_sH^s(x) \ge 0$ for any $x \in \hat{M}$ and $s \in \R$. 

Let $\hat{\lambda^s}:=i_{\hat{X^s}}\hat{\omega^s}$.
By simple calculation, we get
\begin{equation}
-\partial_s\bigl(\mca{A}_{H^s}(u(s))\bigr)
=\int_0^1 \big\lvert \partial_s u(s,t) \big\rvert_{J^s_t}^2 + \partial_sH^s\bigl(u(s,t)\bigr) - 
\partial_s\hat{\lambda^s}\bigl(\partial_t u(s,t)\bigr) dt.
\end{equation}
existence of the third term in integrand requires more arguments than proof of Lemma \ref{lem:sequence}.
In the following, we prove that: there exists $c_0, c_1>0$ such that, if 
$c_0a + c_1 \le \partial_s a$ holds on $[0,1]$, then there exists $c_2>0$ (which may depend on $(H^s)_s$)
such that
\begin{equation}\label{eq:lambdas}
\big\lvert \partial_s u(s,t) \big\rvert_{J^s_t}^2 + \partial_sH^s\bigl(u(s,t)\bigr) - \partial_s\hat{\lambda^s}\bigl(\partial_t u(s,t)\bigr) 
+c_2
\ge
\frac{1}{2}\biggl(\big\lvert \partial_s u(s,t) \big\rvert_{J^s_t}^2 + \partial_s H^s\bigl(u(s,t)\bigr)\biggr).
\end{equation}
Once this is established, Lemma \ref{lem:sequenceagain} is proved by same arguments as proof of Lemma \ref{lem:sequence}.

Since $K$ is compact, to prove (\ref{eq:lambdas}) it is enough to show that there exists $c_3>0$ such that
\begin{equation}\label{eq:c1}
u(s,t) \in \hat{M} \setminus K \implies
\big\lvert \partial_s\hat{\lambda^s}\bigl(\partial_t u(s,t)\bigr)\big\rvert
\le c_3 + \frac{1}{2}\Bigl( \big\lvert \partial_s u(s,t) \big\vert_{J^s_t}^2 + \partial_sH^s\bigl(u(s,t)\bigr) \Bigr).
\end{equation}
First notice that, since $J^s_t \in \mca{J}_{1,K}(\hat{M};\hat{\omega^s},\hat{X^s})$, 
$\langle\,\cdot\,,\,\cdot\,\rangle_{J^s_t}$ satisfies following properties (see Lemma \ref{lem:metric}):
\begin{enumerate}
\item On $\hat{M} \setminus K$, a natural decomposition $T\hat{M}=T(\partial M) \oplus \R \partial_\rho$
is an orthogonal decomposition with respect to $\langle\,\cdot\,,\,\cdot\,\rangle_{J^s_t}$.
\item There exists a metric $\langle\,\cdot\,,\,\cdot\,\rangle_{J^s_t,\partial M}$ on $\partial M$ such that
$\big\lvert \overline{v}(z,\rho) \big\rvert_{J^s_t} = \rho^{\frac{1}{2}}\big\lvert v(z) \big\rvert_{J^s_t,\partial M}$ for
any $v \in T(\partial M)$ and $(z,\rho) \in \hat{M} \setminus K$.
\item There exists $c_{s,t}$ such that $\big\lvert \partial_\rho(z,\rho)\big\rvert_{J^s_t}=(\rho c_{s,t})^{-\frac{1}{2}}$
on $\hat{M} \setminus K$.
\end{enumerate}

We return to the proof of (\ref{eq:c1}).
Since $\hat{M} \setminus K \subset \partial M \times [1,\infty)$, we can write
$u(s,t)=\bigl(z(s,t), \rho(s,t)\bigr)$.
Let $c_4:=\sup_{z,s,t} \big\lvert \partial_s\lambda^s(z)\big\rvert_{J^s_t,\partial M}$. Then,
\[
\big\lvert \partial_s\hat{\lambda^s}(\partial_t u)\big\rvert =
\rho(s,t) \big\lvert \partial_s\lambda^s(\partial_t z) \big\rvert
\le c_4 \rho(s,t) |\partial_t z|_{J^s_t,\partial M}
\le c_4 \rho(s,t)^{\frac{1}{2}} |\partial_t u|_{J^s_t}.
\]
Since $|\partial_t u|_{J^s_t} \le |\partial_s u|_{J^s_t} + \big\lvert \nabla^s_t H^s\big\rvert_{J^s_t}$ and 
$\big\lvert \nabla H^s(z,\rho) \big\rvert_{J^s_t} \le \sup_{s,t} c_{s,t} \cdot a(s) \rho^{\frac{1}{2}}$, 
There exist $c_5, c_6>0$ such that
\[
\big\lvert \partial_s\hat{\lambda^s}(\partial_t u) \big\rvert 
\le 
\frac{1}{2} |\partial_s u|_{J^s_t}^2 + \bigl(c_5 a(s) + c_6\bigr)\rho(s,t).
\]
On the other hand, $\partial_sH^s(z,\rho)=\partial_s a \cdot \rho + \partial_s b$ on $\hat{M}\setminus K$.
Hence, if $2c_5 a + 2c_6 \le \partial_s a$ and $0 \le 2c_3 + \partial_s b$ on 
$[0,1]$, (\ref{eq:c1}) holds for $s \in [0,1]$.
When $s \notin [0,1]$, the left hand side of (\ref{eq:c1}) is zero.
Hence, if $c_3 + \inf \partial_s b \ge 0$, (\ref{eq:c1}) holds for $s \notin [0,1]$.
This completes the proof of Lemma \ref{lem:sequenceagain}.
\end{proof}

\section{Classical Hamiltonian systems}
First we recall notations which are introduced in section 1.
Let $N$ be a $n$-dimensional manifold.
Then, $T^*N$ carries a natural symplectic form
$\omega_N:=\sum_{1\le i \le n} dp_i \wedge dq_i$.

Assume that $N$ carries a Riemannian metric.
Then, for $V \in C^\infty(N)$, we define $H_V \in C^\infty(T^{*}N)$ by 
$H_V(q,p)=V(q)+|p|^2/2$.
Note that $\Crit (H_V)=\Crit (V)$.

For $\xi \in \mf{X}(N)$, We define $F_\xi \in C^\infty(T^*N)$ and $\tilde{\xi} \in \mf{X}(T^*N)$ by
$F_\xi(q,p):=p(\xi_q)$ and $\tilde{\xi}:=X_{F_\xi}$.
Then, $L_{\tilde{\xi}} \omega=0$ and $\tilde{\xi}_{(q,0)}=\xi_q$.
For $a \in \R$, define $Y_a \in \mf{X}(T^*N)$ by $Y_a:=\vec{r}+a\widetilde{\nabla V}$, 
where $\vec{r}:=\sum_{1 \le i \le n} p_i \partial_{p_i}$.

\begin{lem}\label{lem:classicalhamiltoniansystem}
Let $K$ be a compact set in $T^*N$ such that $K \cap \Crit(V)=\emptyset$.
Then, $dH_V(Y_a)>0$ on $K$ for sufficiently small $a>0$.
\end{lem}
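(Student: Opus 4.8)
The plan is to compute $dH_V(Y_a)$ explicitly and show that for small $a>0$ the term $dH_V(\vec r)$ dominates. First I would compute $dH_V(\vec r)$: since $H_V(q,p) = V(q) + |p|^2/2$ and $\vec r = \sum_i p_i \partial_{p_i}$ is the fiberwise radial (Liouville-type) vector field, we get $dH_V(\vec r) = \sum_i p_i \cdot \partial_{p_i} H_V = |p|^2$. Next I would compute $dH_V(\widetilde{\nabla V})$, where $\widetilde{\nabla V} = X_{F_{\nabla V}}$ is the symplectic lift of the gradient vector field $\nabla V \in \mf{X}(N)$; by definition $dH_V(X_{F_{\nabla V}}) = \{H_V, F_{\nabla V}\}_{\omega_N}$ (up to a fixed sign convention), which is a smooth function on $T^*N$. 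Evaluating along the zero section $(q,0)$: since $\widetilde{\nabla V}_{(q,0)} = (\nabla V)_q$ and $dH_V|_{(q,0)}$ kills vertical vectors (because $\partial_{p_i}H_V = p_i = 0$ there) and $dH_V(\partial_{q_i}) = \partial_{q_i}V$, we obtain $dH_V(\widetilde{\nabla V})|_{(q,0)} = dV(\nabla V)_q = |\nabla V(q)|^2 \ge 0$, with equality exactly on $\Crit(V)$.

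The key point is then a compactness argument on $K$. Since $K$ is compact and $K \cap \Crit(V) = \emptyset$, and $H_V^{-1}$ issues aside, on $K$ we have $|p|^2 + |\nabla V(q)|^2 > 0$ at every point (if $p = 0$ then $q \notin \Crit(V)$ so $|\nabla V(q)|^2 > 0$, and if $p \ne 0$ the first term is positive). Hence $dH_V(Y_0) = dH_V(\vec r) = |p|^2$ vanishes only on the compact set $K \cap \{p = 0\}$, on which $dH_V(\widetilde{\nabla V}) = |\nabla V|^2$ is bounded below by a positive constant $\varepsilon > 0$. By continuity, $dH_V(\widetilde{\nabla V}) \ge \varepsilon/2$ on a neighborhood $U$ of $K \cap \{p = 0\}$ in $K$, and on the compact complement $K \setminus U$ we have $|p|^2 \ge c > 0$. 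Choosing $a>0$ small enough that $a \cdot \sup_K |dH_V(\widetilde{\nabla V})| < c$ handles $K \setminus U$, and on $U$ the term $a\,dH_V(\widetilde{\nabla V}) \ge a\varepsilon/2 > 0$ while $|p|^2 \ge 0$, so $dH_V(Y_a) = |p|^2 + a\, dH_V(\widetilde{\nabla V}) > 0$ on all of $K$.

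I expect the only mildly delicate step to be confirming the sign and the pointwise formula $dH_V(\widetilde{\nabla V})|_{(q,0)} = |\nabla V(q)|^2$, i.e. checking the Hamiltonian-vector-field sign conventions so that the lift of $\nabla V$ contributes with the correct (nonnegative) sign along the zero section; once the value on the zero section is pinned down, the rest is a routine compactness-and-continuity argument, partitioning $K$ into a neighborhood of $\{p=0\}$ and its complement and choosing $a$ small. No hard analysis is involved, and the result is essentially the infinitesimal version of the Maupertuis–Jacobi observation that $Y_a$ is transverse to the level sets of $H_V$ away from the critical points.
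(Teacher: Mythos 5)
Your proof is correct and takes essentially the same approach as the paper: compute $dH_V(\vec r)=|p|^2\ge 0$ and $dH_V(\widetilde{\nabla V})|_N=|\nabla V|^2>0$ on $K\cap N$, then split $K$ into a compact piece near the zero section (where the $a$-term is strictly positive) and a compact piece bounded away from it (where $|p|^2\ge c>0$ dominates any contribution from $a\,dH_V(\widetilde{\nabla V})$ once $a$ is small). The paper phrases the decomposition slightly differently, taking $K_0:=\{x\in K\mid dH_V(\widetilde{\nabla V})\le 0\}$ in place of your complement of a tubular neighborhood of $K\cap\{p=0\}$, but this is the same compactness-plus-continuity argument with the same quantitative conclusion, so there is no real difference in content.
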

\begin{proof}
Since $\widetilde{\nabla V}(q,0)=\nabla V(q)$ and $K \cap \Crit(V)= \emptyset$, 
$dH_V(\widetilde{\nabla V})>0$ on $K \cap N$.
Let $K_0$ be a subset of $K$ defined by $dH_V(\widetilde{\nabla V}) \le 0$.
Since $K_0$ is compact and disjoint from $N$, 
$m:=\min_{K_0} |p|^2$ is positive.
Take $M>0$ so that $M> \max_{K_0} -dH_V(\widetilde{\nabla V})$, 
and take $0<a<m/M$.
Then, $dH_V(Y_a)> m-aM >0$ on $K_0$.
On the other hand, since $dH_V(\widetilde{\nabla V})>0$ and $dH_V(\vec{r}) \ge 0$ on 
$K \setminus K_0$, $dH_V(Y_a) > 0$ on $K\setminus K_0$ for any $a>0$.
\end{proof}

As in section 1, we abbreviate 
$H_V^{-1}\bigl((-\infty,h]\bigr)$ by $D_h$, and
$H_V^{-1}(h)$ by $S_h$.
If $h$ is a regular value of $H_V$ and $S_h$ is compact, 
then $(D_h, \omega_N, Y_a, D_h \cap N)$ is a Liouville quadruple for 
sufficiently small $a>0$. This is verified by applying Lemma \ref{lem:classicalhamiltoniansystem} for $K=S_h$.
The main result of this paper is the following:

\begin{thm}\label{mainthm}
Let $N$ be a Riemannian manifold, and $V \in C^\infty(N)$.
Assume that $h$ is a regular value of $V$, and $S_h$ is compact.
If $S_h \cap N \ne \emptyset$ and $D_h$ is connected, then $\WFH_*(D_h, \omega_N, D_h \cap N) = 0$.
\end{thm}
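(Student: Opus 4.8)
The plan is to reduce the statement to the disk case treated in Proposition \ref{prop:WFHofdisk}, by means of a handle decomposition of the Lagrangian and repeated application of Lemma \ref{lem:handleattaching}. Write $L:=D_h\cap N$. Since $h$ is a regular value of $V$, the set $L=\{q\in N\mid V(q)\le h\}$ is a compact $n$-dimensional manifold with $\partial L=S_h\cap N\neq\emptyset$, and $D_h=\{(q,p)\mid |p|^2\le 2(h-V(q))\}$ deformation retracts onto $L$ along $(q,p)\mapsto(q,tp)$; hence $\pi_1(D_h,L)=\pi_2(D_h,L)=0$, and $L$ is connected because $D_h$ is. As $S_h\cap\Crit(V)=\emptyset$, Lemma \ref{lem:classicalhamiltoniansystem} applied with $K=S_h$ makes $(D_h,\omega_N,Y_a,L)$ a Liouville quadruple for all sufficiently small $a>0$. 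The essential observation is that $D_h$ is exactly the pinched codisk bundle $\{|p|^2\le 2f(q)\}$ over $L$, where $f:=(h-V)|_L$ is positive on $\interior L$ and has $0$ as a regular value along $\partial L$.

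I would then show that, through the canonical isomorphisms of Proposition \ref{prop:isotopyinvarianceofWFH}, the group $\WFH_*(D_h,\omega_N,L)$ depends only on the diffeomorphism type of the manifold-with-boundary $L$. A diffeomorphism between two bases induces a symplectomorphism of the ambient cotangent bundles carrying one pinched codisk bundle onto another, so one reduces to comparing two functions $f_0,f_1$ (and two metrics, if one wants to be careful) on a fixed $L$. Linear interpolation $f_s:=(1-s)f_0+sf_1$ stays positive on $\interior L$ and keeps $0$ a regular value along $\partial L$, because $df_0$ and $df_1$ both pair positively with inward-pointing tangent vectors at $\partial L$; hence the domains $M_s:=\{|p|^2\le 2f_s(q)\}$ vary smoothly in $s$ and, by compactness of $[0,1]$ together with Lemma \ref{lem:classicalhamiltoniansystem}, carry Liouville quadruple structures $(M_s,\omega_N,Y_a,L)$ for one fixed small $a$. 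Pulling these back along a smooth family of diffeomorphisms $M_0\to M_s$ fixing $L$ produces a smooth family of Liouville quadruples on the fixed pair $(M_0,L)$, to which Proposition \ref{prop:isotopyinvarianceofWFH} applies.

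Now choose a handle decomposition $L\cong D^n\cup H_1\cup\cdots\cup H_r$ with a single $0$-handle and all handles of index at most $n-1$; such a decomposition exists because $L$ is a connected compact $n$-manifold with nonempty boundary. Realize it symplectically as a chain $(W_0,L_0)\subset\cdots\subset(W_r,L_r)$ of pinched codisk bundles, where $L_i=D^n\cup H_1\cup\cdots\cup H_i$ and $W_i$ is obtained from $W_{i-1}$ by attaching a symplectic handle to $L_{i-1}$ — precisely the subcritical surgery covered by Lemma \ref{lem:handleattaching} — and where $W_0$ is the pinched codisk bundle over $D^n$, which for the choice $f(q)=\tfrac12(1-|q|^2)$ is the round ball $D^{2n}$ with $L_0=D^{2n}\cap\{p=0\}$. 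By the previous paragraph, $\WFH_*(D_h,\omega_N,L)\cong\WFH_*(W_r,\omega_{W_r},L_r)$; applying Lemma \ref{lem:handleattaching} $r$ times gives $\WFH_*(W_r,\omega_{W_r},L_r)\cong\WFH_*(W_0,\omega_{W_0},L_0)$; and $\WFH_*(W_0,\omega_{W_0},L_0)=\WFH_*(D^{2n},\omega_{\st},D^{2n}\cap\{p=0\})=0$ by Proposition \ref{prop:WFHofdisk}, using that wrapped Floer homology does not depend on the Liouville vector field. Hence $\WFH_*(D_h,\omega_N,D_h\cap N)=0$.

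The main obstacle is Lemma \ref{lem:handleattaching} itself — the invariance of wrapped Floer homology under subcritical handle attachment to the Lagrangian — which is the technical core of the paper and is established in Section 5 along the lines of Cieliebak's treatment of subcritical handle attachment in symplectic homology. Within the argument above, the only delicate bookkeeping is to match an abstract handle decomposition of $L$ with an honest chain of symplectic handle attachments, and to arrange the deformation of Proposition \ref{prop:isotopyinvarianceofWFH} to take place on a fixed domain.
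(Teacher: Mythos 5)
Your proposal is correct and follows essentially the same route as the paper: the paper likewise reduces $\WFH_*(D_h,\omega_N,D_h\cap N)$ to the diffeomorphism type of $D_h\cap N$ (Lemma \ref{lem:diffeotypeofDh}), then perturbs $V$ to a Morse function with one minimum and all other critical points of index $\le n-1$ (possible since $D_h\cap N$ is connected with nonempty boundary), crosses critical values one at a time via Theorem \ref{thm:handleattaching}, and bottoms out with Proposition \ref{prop:WFHofdisk}. The only packaging difference is that you invoke Lemma \ref{lem:handleattaching} directly as though it were the general subcritical-surgery statement, whereas in the paper Lemma \ref{lem:handleattaching} concerns the specific domains $D_-$ and $D_\delta$, and the intermediate Theorem \ref{thm:handleattaching} together with the perturbation Lemma \ref{lem:perturb} is what packages it into the form your chain of handle attachments actually uses; you correctly flag this bookkeeping, along with the issue of arranging the isotopy of Proposition \ref{prop:isotopyinvarianceofWFH} on a fixed domain, as the delicate points.
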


By Remark \ref{rem:WFH-}, Theorem \ref{mainthm} implies:

\begin{cor}\label{cor:mainthm}
Let $N$ and $V$ are as in Theorem \ref{mainthm}.
Then, $\mca{C}(S_h, S_h \cap N) \ne \emptyset$.
\end{cor}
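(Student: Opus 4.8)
The plan is to deduce Corollary \ref{cor:mainthm} immediately from Theorem \ref{mainthm} by means of the ``vanishing forces a chord'' principle recorded in Corollary \ref{cor:WFH-} and Remark \ref{rem:WFH-}. Under the hypotheses of Theorem \ref{mainthm} one already knows, applying Lemma \ref{lem:classicalhamiltoniansystem} with $K=S_h$, that $(D_h,\omega_N,Y_a,D_h\cap N)$ is a Liouville quadruple for all sufficiently small $a>0$, and that $\partial D_h=S_h$ while $\partial(D_h\cap N)=S_h\cap N$. Theorem \ref{mainthm} gives $\WFH_*(D_h,\omega_N,D_h\cap N)=0$, so Corollary \ref{cor:WFH-} produces a Reeb chord of $\partial(D_h\cap N)$ in the contact manifold $(\partial D_h, i_{Y_a}\omega_N)$; by Remark \ref{rem:WFH-} the characteristic foliation on $S_h=\partial D_h$, hence the set of Reeb chords up to reparametrisation, depends only on $\omega_N$ and not on $Y_a$, so $\mca{C}(S_h,S_h\cap N)\ne\emptyset$, as claimed.

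Thus the real content lies in Theorem \ref{mainthm}, which I would prove following the outline of the introduction. By deformation invariance (Proposition \ref{prop:isotopyinvarianceofWFH}), together with Corollary \ref{cor:isotopyinvarianceofWFH} used to shrink $D_h$ onto a thin neighbourhood of $N$, the group $\WFH_*(D_h,\omega_N,D_h\cap N)$ depends only on the diffeomorphism type of the Lagrangian $D_h\cap N$ (note $D_h$ deformation retracts onto $D_h\cap N$ by scaling in the fibres, so $D_h$ connected forces $D_h\cap N$ connected, with non-empty boundary). When $D_h\cap N$ is a disk one deforms the quadruple to the standard model $(D^{2n},\omega_\st,D^{2n}\cap\{p=0\})$ and invokes Proposition \ref{prop:WFHofdisk} to get $\WFH_*=0$, the mechanism being a cofinal family of Hamiltonians each having a single chord of index tending to $+\infty$. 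A connected compact manifold with non-empty boundary is built from a disk by successively attaching handles, so Theorem \ref{mainthm} follows by induction on such a handle decomposition, provided $\WFH_*(D_h,\omega_N,D_h\cap N)$ is invariant under the surgeries on $D_h\cap N$ effected by attaching a handle. This last invariance is Lemma \ref{lem:handleattaching}.

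I expect Lemma \ref{lem:handleattaching} to be the main obstacle: one must control the wrapped Floer complex as the Lagrangian is modified by a handle attachment, by an argument patterned on Cieliebak's treatment of subcritical handle attaching in symplectic homology. It is this step that forces the enlarged class $\mca{H}_{\ad}(\hat M)$ of Hamiltonians of section 2 — of contact type only outside a large compact set and with a possibly non-constant slope $a_H$ — and hence the delicate $C^0$ a priori estimate for Floer trajectories of Theorem \ref{thm:c0estimate}. A secondary point requiring care is that the deformation reducing a general $D_h\cap N$ to a disk, and the handle attachments themselves, must be realised through genuine smooth families of Liouville quadruples in order for Proposition \ref{prop:isotopyinvarianceofWFH} to apply.
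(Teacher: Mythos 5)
Your deduction of the corollary is exactly the paper's: Theorem \ref{mainthm} gives the vanishing of $\WFH_*(D_h,\omega_N,D_h\cap N)$, and Corollary \ref{cor:WFH-} together with Remark \ref{rem:WFH-} (independence of the characteristic foliation, hence of the chord set up to reparametrisation, from the choice of Liouville vector field $Y_a$) then yields $\mca{C}(S_h,S_h\cap N)\ne\emptyset$. Your further outline of the proof of Theorem \ref{mainthm} also matches the paper's strategy, so there is nothing to add.
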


Since elements of $\mca{C}(S_h, S_h \cap N)$ correspond to 
orbits of $X_{H_V}$ on $S_h$ which start from and end at $S_h \cap N$, 
Corollary \ref{cor:mainthm} implies Theorem \ref{thm:brakeorbits}.

In the remainder of this section, we reduce Theorem \ref{mainthm} to 
Lemma \ref{lem:handleattaching}.
First, we prove the following lemma:

\begin{lem}\label{lem:diffeotypeofDh}
$\WFH_*(D_h, \omega_N, D_h \cap N)$ depends only on diffeomorphism type of $D_h \cap N$.
\end{lem}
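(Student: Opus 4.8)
The plan is to realize any diffeomorphism between $D_h \cap N$ for two different potentials (or different energy levels) as arising from a smooth family of Liouville quadruples, and then invoke the deformation invariance of wrapped Floer homology (proposition \ref{prop:isotopyinvarianceofWFH}, together with corollary \ref{cor:isotopyinvarianceofWFH}). Concretely, suppose $V_0, V_1 \in C^\infty(N)$ with regular values $h_0, h_1$ and compact level sets $S_{h_0}, S_{h_1}$, such that $D_{h_0} \cap N$ and $D_{h_1} \cap N$ are diffeomorphic as submanifolds of $N$. Since $D_h \cap N = V^{-1}\bigl((-\infty,h]\bigr)$ and $H_V$ restricted to $N$ equals $V$, the set $D_h \cap N$ is a codimension-zero submanifold-with-boundary of $N$ determined by the sublevel set of $V$, and its diffeomorphism type is all that enters the statement.

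First I would reduce to the case where the two potentials agree near the zero section, or more precisely where $D_{h_0} \cap N$ and $D_{h_1} \cap N$ are literally equal (not just diffeomorphic) after composing with an ambient diffeomorphism of $N$. A diffeomorphism of $N$ pulls back $\omega_N$ to another symplectic form on $T^*N$ with the same cotangent-bundle structure, so this step is harmless. Next, on a fixed domain $U := D_{h_0} \cap N = D_{h_1} \cap N \subset N$, I would construct a smooth family $V_s$, $s \in [0,1]$, of functions on $N$ interpolating between $V_0$ and $V_1$, arranged so that for every $s$ the value $h(s)$ stays regular, $S_{h(s)}$ stays compact, and $V_s^{-1}\bigl((-\infty,h(s)]\bigr) \cap N = U$ throughout — this is possible because the constraint is only on the sublevel set inside $N$, which we are holding fixed, while we are free to reshape $V_s$ on the fibers and away from $U$. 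Then $\bigl(D_{h(s)}, \omega_N, Y_{a(s)}, U\bigr)$ is a smooth family of Liouville quadruples (choosing $a(s)>0$ small via lemma \ref{lem:classicalhamiltoniansystem}), and proposition \ref{prop:isotopyinvarianceofWFH} gives the desired isomorphism. The Liouville vector field $Y_{a(s)}$ may have to be chosen $s$-dependently, but since $\WFH_*$ is independent of the Liouville vector field (the remark following proposition \ref{prop:isotopyinvarianceofWFH}), this causes no difficulty.

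The main obstacle I anticipate is the construction of the interpolating family $V_s$ with all the required properties simultaneously: keeping $h(s)$ regular and $S_{h(s)}$ compact while fixing the trace $U$ on $N$. The cleanest route is probably to work near $N$ in a tubular-neighborhood model of $T^*N$, write $H_{V_s}$ in Morse-Bott normal form transverse to $N$, and note that whether $S_{h(s)} \cap N \ne \emptyset$, together with the diffeomorphism type of $S_{h(s)} \cap N$ as a hypersurface in $N$, is exactly the data that survives; one then appeals to corollary \ref{cor:isotopyinvarianceofWFH} to shrink $D_{h(s)}$ to a small neighborhood of $D_{h(s)} \cap N$ on which $H_{V_s}$ is controlled, so that only the germ of $V_s$ along $U$ matters. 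Once the picture is reduced to such a neighborhood, interpolating is straightforward and the lemma follows.
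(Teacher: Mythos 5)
Your overall strategy is the same as the paper's: realize two Liouville quadruples as endpoints of a smooth family and invoke proposition \ref{prop:isotopyinvarianceofWFH}. But you explicitly flag the construction of the interpolating family as ``the main obstacle I anticipate,'' and your proposal does not actually produce it; the sketch involving a Morse--Bott normal form and corollary \ref{cor:isotopyinvarianceofWFH} remains vague. This is precisely the step the paper handles, and it does so by a clean change of viewpoint that your proposal misses.

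The paper does not try to stay in $T^*N$ and does not use ambient diffeomorphisms of $N$. Instead, writing $K = D_h \cap N$, it introduces the abstract collared manifold $\overline{K} = K \cup \bigl(\partial K \times [0,1]\bigr)$ and considers, inside $T^*\overline{K}$, all quadruples of the form $\bigl(D_{g,W}, \omega_{\overline{K}}, Y_{g,W,a}, K\bigr)$ with $H_{g,W}(q,p) = |p|^2_g/2 + W(q)$, where $g$ is any metric on $\overline{K}$ and $W$ is any function with $K = W^{-1}\bigl((-\infty,0]\bigr)$ and $0$ regular. The crucial point is that this parameter space is convex: for two choices $(g_0,W_0)$, $(g_1,W_1)$ the linear interpolation $(g_t, W_t) = \bigl(tg_1+(1-t)g_0,\, tW_1+(1-t)W_0\bigr)$ again consists of metrics and defining functions with the same sublevel set $K$ and $0$ regular, because outward-pointing differentials along $\partial K$ form a convex cone. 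Compactness of $D_{g_t,W_t}$ and the regularity of the level set are automatic, so the delicate simultaneous control you were worried about disappears. One then pulls back the data $(g,W)$ on $\overline{K}$ from the original setup via an embedding $\overline{i}\colon \overline{K} \to N$ extending the inclusion $K \hookrightarrow N$ (with $W = V\circ \overline{i} - h$), identifying $(D_h, \omega_N, D_h\cap N)$ with a quadruple of the model form, and the lemma follows.

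Two further gaps in your proposal worth noting. First, the statement asserts dependence only on the diffeomorphism type of $D_h\cap N$ as an abstract manifold with boundary, which in principle allows $N$ (and the ambient metric) to change; a diffeomorphism $D_{h_0}\cap N_0 \to D_{h_1}\cap N_1$ need not extend to an ambient diffeomorphism, so the reduction you propose in the first paragraph is not available in general. Working over $\overline{K}$ sidesteps this entirely. Second, you implicitly hold the Riemannian metric fixed and only vary $V$; the lemma also requires independence of the metric, and the paper's linear interpolation of $g_t$ handles that at no extra cost.
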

\begin{proof}
Let $K:=D_h \cap N$ and $\overline{K}:= K \cup \partial K \times [0,1]$.
Take any Riemannian metric $g$ on $\overline{K}$ and 
$W \in C^\infty(\overline{K})$ so that $0$ is a regular value of $W$ and
$K = W^{-1}((-\infty,0])$.
For such $(g,W)$, define $H_{g,W} \in C^\infty(T^{*}\overline{K})$ by
$H_{g,W}(q,p)=|p|_g^2/2+W(q)$, and let $D_{g,W}:=H_{g,W}^{-1}\bigl((-\infty,0]\bigr)$.
For $a \in \R$, let $Y_{g,W,a}:=\vec{r}+a\widetilde{\nabla_g W}$.
Then, $(D_{g,W}, \omega_{\overline{K}}, Y_{g,W,a}, K)$ is a Liouville quadruple
for sufficiently small $a>0$.

We claim that $\WFH_{*}(D_{g,W}, \omega_{\overline{K}}, K)$ does not depend on choice of $g$ and $W$.
In particular, $\WFH_{*}(D_{g,W}, \omega_{\overline{K}}, K)$ depends only on diffeomorphism type of $K$.
This is proved as follows.
Take two choices $(g_0,W_0)$ and $(g_1,W_1)$.
Let $g_t:=tg_1+(1-t)g_0$ and $W_t:=tW_1+(1-t)W_0$.
Then, when we take $a>0$ sufficiently small, 
$(D_{g_t,W_t}, \omega_{\overline{K}}, Y_{g_t,W_t,a}, K)$ is a smooth family of Liouville quadruples.
Then, the claim follows from Proposition \ref{prop:isotopyinvarianceofWFH}.

Extend the inclusion map $i \colon K \to N$ to 
an embedding $\overline{i} \colon \overline{K} \to N$.
Let $g$ be the pullback of the Riemannian metric on $N$ by $\overline{i}$, and
$W:=V \circ \overline{i} -h$.
Then, $(D_h, \omega_N, D_h \cap N)$ in Theorem \ref{mainthm}
can be identified with $(D_{g,W}, \omega_{\overline{K}}, K)$.
So, the above claim proves the lemma.
\end{proof}

We return to the proof of Theorem \ref{mainthm}.
We may assume $h=0$, and by Lemma \ref{lem:diffeotypeofDh}, we may assume that $V$ is Morse.
Then, $\Crit(V) \cap V^{-1}\bigl((-\infty,0]\bigr)$ consists of finitely many points. 
We denote it by $\{P_1,\ldots,P_l\}$.
Moreover, we may assume the following.
\begin{enumerate}
\item $V(P_1)<\cdots<V(P_l)<0$.
\item $1 \le \ind P_m \le n-1$ for $2 \le m \le l$ and $\ind P_1=0$.
\end{enumerate}
Note that we can eliminate critical points of index $n$, since $D_h \cap N$ is connected and 
its boundary is non-empty.

If $h \in \bigl(V(P_1), V(P_2)\bigr)$, $D_h \cap N$ is diffeomorphic to $D^n$.
Hence, by Lemma \ref{lem:diffeotypeofDh} and Proposition \ref{prop:WFHofdisk},
$\WFH_*(D_h, \omega_N, D_h\cap N)=0$. 

By Lemma \ref{lem:diffeotypeofDh}, if $[h,h']$ contains no critical value of $V$, 
then $\WFH_*(D_h, \omega_N, D_h \cap N) \cong \WFH_*(D_{h'}, \omega_N, D_{h'} \cap N)$.
Therefore, if we prove the following Theorem \ref{thm:handleattaching}, 
we can prove Theorem \ref{mainthm} by applying Theorem \ref{thm:handleattaching} to each critical points
$P_2,\ldots,P_m$.

\begin{thm}\label{thm:handleattaching}
Let $N$ be a $n$-dimensional Riemannian manifold, $V$ be a Morse function on $N$, and 
$P \in \Crit(V)$ with $1 \le \ind P \le n-1$.
Assume that there exists $\varepsilon>0$ such that
$\Crit(V) \cap V^{-1}\bigl([V(P)-\varepsilon, V(P)+\varepsilon]\bigr) = \{P\}$, and
$D_{V(P)+\varepsilon}$ is compact.
Then, 
\[
\WFH_*(D_{V(P)-\varepsilon}, \omega_N, D_{V(P)-\varepsilon} \cap N) \cong
\WFH_*(D_{V(P)+\varepsilon}, \omega_N, D_{V(P)+\varepsilon} \cap N).
\]
\end{thm}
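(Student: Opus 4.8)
The plan is to realize the passage from $D_{V(P)-\varepsilon}$ to $D_{V(P)+\varepsilon}$ as an attachment of a $(\ind P)$-handle to the Lagrangian $D_h \cap N$, and to prove invariance of wrapped Floer homology under such an attachment by adapting Cieliebak's argument for subcritical handle attaching in symplectic homology \cite{Cieliebak}. First I would set up the local model: near the critical point $P$, Morse coordinates on $N$ identify a neighborhood of $P$ with $\R^n = \R^k \times \R^{n-k}$ ($k = \ind P$) on which $V - V(P)$ is $-|x|^2/2 + |y|^2/2$, so the sublevel set $D_h \cap N$ for $h$ crossing $V(P)$ acquires a $k$-handle $D^k \times D^{n-k}$ glued along $\partial D^k \times D^{n-k}$. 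Correspondingly, $D_h$ itself in $T^*N$ is modified in a neighborhood of $P$, and the point is that this modification is ``isotropic'' in the sense that $1 \le k \le n-1$ guarantees the handle is attached below the critical dimension for the relevant Reeb/contact geometry on the boundary.

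The key steps, in order, would be: \textbf{(1)} Use Lemma \ref{lem:diffeotypeofDh} to replace $(D_h,\omega_N, D_h\cap N)$ by the model Liouville quadruple $(D_{g,W}, \omega_{\overline{K}}, K)$ built from any convenient metric and defining function on a collar of $D_h\cap N$, so that the geometry near $P$ can be taken to be exactly the standard Morse model, with a standard metric and standard Liouville vector field $Y_a = \vec r + a\widetilde{\nabla W}$. \textbf{(2)} Choose a cofinal family of admissible Hamiltonians $H^\pm$ on $\widehat{D_{V(P)\pm\varepsilon}}$ that agree outside a neighborhood of the handle and are linear in the cylindrical coordinate, and set up a chain-level comparison: every Reeb chord of $\partial(D_{V(P)+\varepsilon}\cap N)$ is, for energy below a bound that grows with the slope, either a Reeb chord already present for $D_{V(P)-\varepsilon}$ or one of a controlled list of ``new'' chords running through the handle region. \textbf{(3)} Show that the new chords either do not exist (when $k$ is small enough that no Reeb chord can loop through the handle) or cancel in pairs / have index going to $-\infty$, exactly as in Cieliebak's computation; this is where $1 \le \ind P \le n-1$ is used, the upper bound excluding index-$n$ handles (which would create a genuinely new generator) and the positivity of $\ind P$ making the handle attachment happen along a nonempty sphere. \textbf{(4)} Invoke the $C^0$ estimate (Theorem \ref{thm:c0estimate}) to guarantee that Floer trajectories for the interpolating data stay in a compact set, so that the continuation maps between $\WFH_*(H^-)$ and $\WFH_*(H^+)$ are defined; because the Hamiltonians we must interpolate through are only of contact type outside a \emph{large} compact set (the handle forces us out of the usual class), this is precisely the reason the broader class $\mca{H}_K(\hat M)$ and the estimate of Section 3 were set up. \textbf{(5)} Pass to the direct limit over slopes to conclude the isomorphism $\WFH_*(D_{V(P)-\varepsilon},\omega_N, \cdot) \cong \WFH_*(D_{V(P)+\varepsilon},\omega_N, \cdot)$.

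I expect the main obstacle to be step \textbf{(3)}: controlling the Reeb chords of the Legendrian boundary $\partial(D_h\cap N)$ inside $\partial D_h$ as $h$ crosses $V(P)$, and showing the newly created chords contribute trivially to homology. Concretely one must understand the characteristic foliation on $\partial D_h$ near the handle — the flow of $Y_a = \vec r + a\widetilde{\nabla W}$ restricted to the energy hypersurface — and verify that no short Reeb chord of the new Legendrian $\partial K$ threads the handle, or that those that do come in cancelling pairs with index difference one. This is the open-string analogue of Cieliebak's count of periodic Reeb orbits after subcritical surgery, and transcribing it to the Lagrangian/Legendrian setting with the handle attached to $L$ rather than to $M$ will require care with the index formula $\ind(x) = n/2 + \mu_{\RS}(\Lambda,\{p=0\})$ and with the relative homotopy vanishing $\pi_1(M,L) = \pi_2(M,L) = 0$, which must be checked to persist under the handle attachment. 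The remaining steps are, modulo bookkeeping, routine given Theorem \ref{thm:c0estimate}, Proposition \ref{prop:isotopyinvarianceofWFH}, and the direct-limit formalism already in place; the actual surgery-invariance argument is carried out in Section 5 as Lemma \ref{lem:handleattaching}.
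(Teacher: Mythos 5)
Your outline matches the paper's overall strategy (normalize the local model via Lemma \ref{lem:diffeotypeofDh}, compare cofinal Hamiltonians before and after attaching a thin handle, show the new chords do not contribute in any fixed degree, pass to the direct limit), but it leaves unaddressed the two steps that actually carry the proof, and your guess at the mechanism in step (3) is not what happens. First, you never confront the possibility that a Reeb chord of $\partial D_{V(P)-\varepsilon}$ has an endpoint exactly on the attaching sphere $\Sigma$; if that happens, the before/after comparison of chords breaks down. The paper deals with this by a genuine transversality argument: it perturbs $H_V$ (with support away from the local Morse model) so that the set $S(H)$ of Hamiltonian arcs with an endpoint on the descending disk is countable, and then chooses $\varepsilon$ so that no chord of $\partial D_-$ meets $\Sigma$ (Lemma \ref{lem:perturb} and Lemma \ref{lem:Dpm}(3)). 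This is where the hypothesis $\ind P\le n-1$ is really used --- it makes the Fredholm index $k+1-n$ of the relevant moduli problem nonpositive --- not, as you suggest, to ``exclude a genuinely new generator.'' Once this is in place, the quantitative statement (Lemma \ref{lem:delta}) that any Reeb chord touching a small neighborhood of $\Sigma$ but leaving the handle must be long allows one to shrink the handle width $\delta_i$ relative to the slope $\alpha_i$, so that the only new chord is the constant one at the handle's center, whose index tends to $+\infty$ (as in Proposition \ref{prop:WFHofdisk}); there is no cancellation in pairs and no index tending to $-\infty$.

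Second, your step (4) invokes Theorem \ref{thm:c0estimate}, but that only confines trajectories to a compact subset of $\hat M$; to identify $\WFC_{\le m}(F_i)$ with the span of $\mca{C}_-(G_i)$ \emph{as chain complexes}, and to make the continuation squares commute, one needs Floer trajectories between chords lying in $D_-$ to stay inside $D_-$. This is a separate ``no escape'' statement (Lemma \ref{lem:annuluslemma}, proved by an integrated maximum principle on the region where $u$ exits $M^{\inn}$, following Abouzaid--Seidel), and without it the subspace generated by $\mca{C}_-(G_i)$ need not even be a subcomplex. Since you explicitly defer the surgery-invariance step to the paper's Section 5, and your sketch of how the new chords are disposed of does not match a workable mechanism, the proposal as written has a genuine gap at precisely the point where the theorem is nontrivial.
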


In the remainder of this section, we reduce Theorem \ref{thm:handleattaching} to Lemma \ref{lem:handleattaching}.
By Morse lemma, there exists a coordinate neighborhood $U$ around $P$ and local chart $(q_1,\ldots,q_n)$ on $U$ such that
$P$ corresponds to $(0,\ldots,0)$ and  
\[
V(q)=V(P)+\bigl\{-(q_1^2+\ldots+q_k^2)+(q_{k+1}^2+\ldots+q_n^2)\bigr\}/2.
\]
Here $k=\ind P$. Denote by $\pi_N$ the natural projection $T^*N \to N$.
In the following of this paper, we often consider $\pi_N^{-1}(U)$ as a subset of ${\R}^{2n}$ 
using the coordinate $(q,p)$.

We introduce some notations which we use in the following of this paper.
First, we abbreviate $(q_1,\ldots,q_n)$ by $q$, $(p_1,\ldots,p_n)$ by $p$, and
$(p_1,\ldots,p_k)$, $(p_{k+1},\ldots,p_n)$, $(q_1,\ldots,q_k)$, $(q_{k+1},\ldots,q_n)$ by $p_-, p_+, q_-, q_+$.
Moreover, we set 
\begin{align*}
D\bigl([a,b]\bigr)&:=\bigl\{(q,p)\bigm{|}p=0,\,a \le |q|^2 \le b \bigr\}, \\
D_-\bigl([a,b]\bigr)&:=\bigl\{(q,p)\bigm{|}p=0,\,q_+=0,\,a \le |q_-|^2 \le b \bigr\}.
\end{align*}
$D\bigl((a,b]\bigr)$ etc. are defined in the same manner.

By Lemma \ref{lem:diffeotypeofDh}, we may assume that Riemannian metric on $U$ is $\sum_{1 \le i \le n} dq_i^2$.
Take $b>0$ sufficiently small so that $D\bigl([0,2b]\bigr) \subset U$ and
$\Crit(V) \cap V^{-1}\bigl([V(P)-b,V(P)+b]\bigr)=\{P\}$.

\begin{lem}\label{lem:DHVYA}
For sufficiently small $a>0$, $dH_V(Y_a)>0$ on
$H_V^{-1}\bigl([V(P)-b,V(P)+b]\bigr)\setminus\{P\}$.
\end{lem}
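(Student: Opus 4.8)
The plan is to cover $H_V^{-1}\bigl([V(P)-b,V(P)+b]\bigr)\setminus\{P\}$ by two pieces: a neighborhood of $P$, on which one computes $dH_V(Y_a)$ explicitly in the Morse chart, and the complementary region, which is compact and disjoint from $\Crit(V)$ so that lemma \ref{lem:classicalhamiltoniansystem} applies verbatim. This is the natural workaround for the fact that $H_V^{-1}\bigl([V(P)-b,V(P)+b]\bigr)\setminus\{P\}$ is itself not compact, so lemma \ref{lem:classicalhamiltoniansystem} cannot be invoked directly on it.

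First I would work on $\pi_N^{-1}(U)$, using the Morse coordinates $(q,p)$ and the metric $\sum dq_i^2$. A short computation gives $\nabla V(q)=(-q_-,q_+)$, hence $F_{\nabla V}(q,p)=-\langle p_-,q_-\rangle+\langle p_+,q_+\rangle$ and $\widetilde{\nabla V}=X_{F_{\nabla V}}$ can be read off directly; combined with $dH_V(\vec{r})=|p|^2$ this yields
\[
dH_V(Y_a)=a|q|^2+(1+a)|p_-|^2+(1-a)|p_+|^2 \quad\text{on }\pi_N^{-1}(U).
\]
For any $a\in(0,1)$ the right-hand side is a positive-definite quadratic form in $(q,p)$, so $dH_V(Y_a)>0$ on $\pi_N^{-1}(U)\setminus\{P\}$; in particular on $\bigl(H_V^{-1}([V(P)-b,V(P)+b])\cap\pi_N^{-1}(\{|q|^2<b\})\bigr)\setminus\{P\}$.

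For the complement, set $K:=H_V^{-1}\bigl([V(P)-b,V(P)+b]\bigr)\setminus\pi_N^{-1}\bigl(\{|q|^2<b\}\bigr)$. Shrinking $b$ if necessary so that $b\le\varepsilon$, the band $H_V^{-1}\bigl([V(P)-b,V(P)+b]\bigr)$ is a closed subset of the compact set $D_{V(P)+\varepsilon}$, hence compact, and so is $K$; moreover $K\cap\Crit(V)=\emptyset$ since $\Crit(V)\cap V^{-1}\bigl([V(P)-b,V(P)+b]\bigr)=\{P\}$ and $P\notin K$. Lemma \ref{lem:classicalhamiltoniansystem} then provides $a_0>0$ with $dH_V(Y_a)>0$ on $K$ for all $0<a<a_0$. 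Taking $0<a<\min(1,a_0)$, every point of $H_V^{-1}\bigl([V(P)-b,V(P)+b]\bigr)\setminus\{P\}$ lies either over $\{|q|^2<b\}$, where the explicit formula gives positivity, or in $K$, where lemma \ref{lem:classicalhamiltoniansystem} does; this proves the lemma. The only substantive step is the near-$P$ computation, which is needed precisely because a punctured neighborhood of $P$ fails to be compact; everything else is bookkeeping.
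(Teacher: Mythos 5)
Your proof is correct and takes essentially the same route as the paper's: the explicit Morse-chart computation giving $dH_V(Y_a)=a|q|^2+(1+a)|p_-|^2+(1-a)|p_+|^2$ near $P$, combined with lemma \ref{lem:classicalhamiltoniansystem} applied to the compact complement. The only (immaterial) difference is that you excise $\pi_N^{-1}\bigl(\{|q|^2<b\}\bigr)$ where the paper excises $\pi_N^{-1}(U)$, and you spell out the compactness of the energy band slightly more carefully.
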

\begin{proof}
On $\pi_N^{-1}(U)$, we can write explicitly:
\begin{align*}
H_V(q,p)&=\frac{|p|^2-|q_-|^2+|q_+|^2}{2},\quad
dH_V(q,p)=pdp-q_{-}dq_{-}+q_{+}dq_{+},\\
Y_a(q,p)&=-aq_{-}\partial_{q_{-}}+(1+a)p_{-}\partial_{p_{-}}
         +aq_{+}\partial_{q_{+}}+(1-a)p_{+}\partial_{p_{+}}.
\end{align*}
Then, 
\[
dH_V(Y_a)=(1-a)|p_{+}|^2+(1+a)|p_{-}|^2+a|q|^2.
\]
Hence if $a \in (0,1)$, $dH_V(Y_a)>0$ on $\pi_N^{-1}(U)\setminus\{P\}$.
Therefore, to prove the claim, it is enough to show that $dH_V(Y_a)>0$ on $H_V^{-1}\bigl([V(P)-b, V(P)+b]\bigr)\setminus \pi_N^{-1}(U)$ for 
sufficiently small $a>0$. This follows from Lemma \ref{lem:classicalhamiltoniansystem}, since $H_V^{-1}\bigl([V(P)-b, V(P)+b]\bigr)\setminus \pi_N^{-1}(U)$ is compact and disjoint from 
$\Crit (V)$.
\end{proof}

For $H \in C^\infty(T^*N)$, let $S(H)$ be the set of $x:I \to T^*N$ with
$|I|>0$, $\dot{x}=X_H(x)$, $x(\partial I) \subset N$ and $x(\partial I) \cap D_-\bigl((0,b)\bigr) \ne \emptyset$.

We will show that for generic $H$, which is obtained by perturbing $H_V$, $S(H)$ is a countable set.
To put it more rigorously, we first explain the setting for perturbation.
Let $\mca{H}$ be an affine space consists of $H \in C^\infty(T^*N)$ such that
$\supp (H-H_V) \subset \bigl\{|p|^2 \le 2b \bigr\} \setminus \pi_N^{-1}\bigl(D\bigl([0,2b)\bigr)\bigr)$.
We equip $\mca{H}$ with usual $C^\infty$ topology, i.e. the topology induced by distance 
\[
d_{C^\infty}(H,H'):= \sum_{m=0}^\infty 2^{-m} \frac{|H-H'|_{C^m}}{1+|H-H'|_{C^m}}.
\]
Then, the following lemma holds.
The proof is postponed until the end of this section.

\begin{lem}\label{lem:perturb}
There exists $\mca{H}' \subset \mca{H}$, such that $\mca{H'}$ is of second category in $\mca{H}$ and 
$S(H)$ is a countable set for any $H \in \mca{H}'$.
\end{lem}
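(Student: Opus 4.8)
The plan is to set this up as a standard Sard--Smale transversality argument. I would first reformulate $S(H)$ in terms of a single evaluation map. Fix a point $q_0 \in D_-\bigl((0,b)\bigr)$, a time $l > 0$, and consider solutions $x \colon [0,l] \to T^*N$ of $\dot x = X_H(x)$ with $x(0) \in N$ and $x(0) \in \pi_N^{-1}\bigl(D_-\bigl((0,b)\bigr)\bigr)$; the other endpoint condition $x(l) \in N$ becomes the requirement that the time-$l$ Hamiltonian flow $\phi^H_l$ sends the point $x(0)$ back to the zero section. Since $N$ is a Lagrangian in $T^*N$ and the perturbation is supported away from $\pi_N^{-1}\bigl(D\bigl([0,2b)\bigr)\bigr)$ where the flow is explicit, this is exactly the setup of a Lagrangian intersection/chord problem, and the natural thing is to parametrize: for each $l$, look at the map $N \cap \pi_N^{-1}\bigl(D_-\bigl((0,b)\bigr)\bigr) \ni q \mapsto \phi^H_l(q) \in T^*N$ and ask when it meets $N$. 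Because the relevant piece of $N$ is a manifold of dimension $< n$ (it sits inside $D_-$, which has dimension $k \le n-1$) and $N$ has codimension $n$ in $T^*N$ (dimension $2n$), for a fixed $l$ generic transversality forces the intersection to be \emph{empty}; allowing $l$ to vary over $(0,\infty)$ adds one parameter, so the expected dimension of the solution set is $k - n + 1 \le 0$, hence for generic $H$ it is a $0$-dimensional manifold, and together with the $\R$-family of starting times along each orbit this gives that $S(H)$ is at most countable (a countable union of $0$-manifolds, hence countable if we also note each orbit contributes only finitely many, or rather a discrete set of, reparametrizations).

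The key steps, in order, would be: (1) set up the universal moduli space $\mca{M} = \bigl\{(x, H) : H \in \mca{H}, \ x \text{ solves the ODE with the stated boundary conditions}\bigr\}$, after first reducing to a fixed time interval by rescaling, and exhibit it as the zero set of a smooth Fredholm section of an appropriate Banach bundle over (a Banach manifold modeled on) $W^{1,p}$-paths times $\mca{H}$ (one must use a Banach-space completion of $\mca{H}$, e.g. the Floer $C^\infty_\varepsilon$ trick, and then pass back to $C^\infty$); (2) prove the universal section is transverse to zero --- this is where the support condition on the perturbation is essential: one shows that near any solution $x \in S(H)$ the orbit must spend positive time in the region $\{|p|^2 \le 2b\} \setminus \pi_N^{-1}\bigl(D([0,2b))\bigr)$ where perturbations of $H$ are unconstrained, and a standard argument (varying $H$ supported near a non-characteristic point of the orbit) shows the linearized operator is surjective; (3) apply the Sard--Smale theorem to the projection $\mca{M} \to \mca{H}$ to conclude that for $H$ in a residual (second category) subset $\mca{H}' \subset \mca{H}$, the fiber $\mca{M}_H$ is a smooth manifold of the expected dimension; (4) do the index/dimension count described above and observe it is $\le 0$, so $\mca{M}_H$ is a $0$-manifold, hence countable (it is second-countable), and finally translate back: elements of $S(H)$ are determined by a point of this $0$-manifold together with the choice of parametrizing interval, giving countability of $S(H)$.

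The main obstacle I expect is step (2), the transversality of the universal moduli space. The subtlety is that the boundary condition $x(0) \in \pi_N^{-1}\bigl(D_-((0,b))\bigr)$ constrains the \emph{initial} point to a set over which $H$ is \emph{not} allowed to be perturbed (indeed $\supp(H - H_V)$ avoids $\pi_N^{-1}(D([0,2b)))$), so one cannot perturb freely near the endpoints; one must argue that a non-trivial orbit of $X_{H}$ starting on $D_-((0,b))$ and returning to $N$ necessarily leaves the neighborhood $\pi_N^{-1}(D([0,2b)))$ --- because inside that neighborhood the dynamics of $H_V$ is the explicit hyperbolic quadratic flow which has no such returning orbits except the constant one at $P$, and $P \notin D_-((0,b))$ since $0 < |q_-|^2$ there. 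Granting that, there is an open arc of the orbit in the region where perturbations are allowed and where the orbit is not a trajectory through a critical point, and the usual ``inject a bump function transverse to the orbit'' argument gives surjectivity of the linearized universal operator. A secondary technical point is handling the non-compactness coming from the free time parameter $l \in (0,\infty)$ and the behavior as $l \to 0$ or $l \to \infty$, but this only affects whether $\mca{M}_H$ is compact, not whether it is a countable $0$-manifold, so it does not interfere with the conclusion.
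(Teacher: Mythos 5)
Your skeleton matches the paper's: parametrize the problem by a free time $l>0$, view it as a Fredholm section over $\mca{H}$, obtain transversality of the universal moduli space via perturbations supported in $\{|p|^2 \le 2b\}\setminus\pi_N^{-1}\bigl(D([0,2b))\bigr)$, apply Sard--Smale, and observe that the index is $k+1-n\le 0$. Whether one phrases this via the flow-map evaluation $q\mapsto\phi^H_l(q)$ (your version) or the path-space section $s_H(x,l)=\dot x - lX_H(x)$ on $(0,\infty)\times L^{1,2}$ (the paper's Steps~2--3) is a presentational choice yielding the same index, and both use a $C^m$-to-$C^\infty$ passage (you invoke $C^\infty_\varepsilon$, the paper does a Taubes-style open-dense intersection in Steps~7--8).

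Two points you pass over are genuine gaps that the paper handles explicitly. First, the ``inject a bump function transverse to the orbit'' argument requires the orbit to be \emph{injective} on the arc it spends in the perturbation region, since otherwise $\nabla h$ cannot be prescribed freely along it; the paper builds the condition ``$x(t)\ne x(0)$ for $\tfrac12\le t\le 1-\tfrac{c}{l}$'' into the Banach manifold $\mca{B}$ and proves in Step~2 that any solution is injective on $[0,1)$. Moreover, the surjectivity argument there is sharper than a generic bump: one shows a cokernel element must coincide with $a\,\nabla H\circ x$ on a subinterval, propagate to all of $[0,1]$ by the adjoint ODE established in Step~4, and vanish because $dH$ is nonzero on $T_{x(0)}D_-\bigl((0,b)\bigr)$. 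Second, having shown $s_H^{-1}(0)$ countable, one still has to translate back to $S(H)$: non-injective chords are excluded from $\mca{B}$ by construction, so they need separate treatment; Step~9 maps each non-injective $x\in S^-(H)$ to its first-return truncation in $s_H^{-1}(0)$ and checks the fibers are countable. Your ``discrete set of reparametrizations'' remark does not address this. Neither issue is fatal --- they are fillable along the lines of Steps~2, 4--5, and 9 --- but as written the proposal is incomplete on precisely those two points.
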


Take $a>0$ sufficiently small so that $dH_V(Y_a)>0$ on $H_V^{-1}\bigl([V(P)-b, V(P)+b]\bigr)\setminus\{P\}$.
Then, there exists $c>0$ such that if $H \in \mca{H}_U$ satisfies $d_{C^\infty}(H,H_V) \le c$, then
$dH(Y_a)>0$ on $H^{-1}\bigl([V(P)-b, V(P)+b]\bigr) \setminus \{P\}$.

By Lemma \ref{lem:perturb}, there exists $H \in \mca{H}_U$ such that
$d_{C^\infty}(H,H_V) \le c$ and $S(H)$ is countable.
Moreover, there exists $\varepsilon \in (0,b/2)$ such that
$H(x) \ne V(P)-\varepsilon$ for any $x \in S(H)$, since $S(H)$ is a countable set.

Let $D_{\pm}:=H^{-1}\bigl((-\infty,V(P)\pm\varepsilon]\bigr)$, and 
\[
\Sigma:=\partial D_{-} \cap D_-\bigl((0,b)\bigr) = \bigl\{(q,p) \bigm{|} p=q_+=0,\,|q_-|^2=2\varepsilon \bigr\}.
\]
We summarize their properties:

\begin{lem}\label{lem:Dpm}
\begin{enumerate}
\item $(D_{\pm}, \omega_N, Y_a, D_{\pm}\cap N)$ are Liouville quadruples.
\item $\WFH_*(D_{\pm}, \omega_N, D_{\pm}\cap N) \cong \WFH_*(D_{V(P)\pm\varepsilon}, \omega_N, D_{V(P)\pm\varepsilon}\cap N)$.
\item For any $x \colon I \to \partial D_{-}$ in $\mca{C}(\partial D_{-}, \partial D_{-}\cap N)$, 
$x(\partial I) \cap \Sigma = \emptyset$.
\end{enumerate}
\end{lem}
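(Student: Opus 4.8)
All three assertions are a matter of assembling what has been set up in the preceding paragraphs, and I would treat them in turn.

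\emph{(1).} Since $L_{\vec r}\omega_N=\omega_N$ while $\widetilde{\nabla V}=X_{F_{\nabla V}}$ is Hamiltonian (so $L_{\widetilde{\nabla V}}\omega_N=0$), one gets $L_{Y_a}\omega_N=\omega_N$. Because $H$ coincides with $H_V$ near $P$ and $d_{C^\infty}(H,H_V)\le c$, Lemma~\ref{lem:DHVYA} and the choice of $c$ give $dH(Y_a)>0$ on $H^{-1}\bigl([V(P)-b,V(P)+b]\bigr)\setminus\{P\}$; as $\varepsilon<b/2$ and $H(P)=V(P)$, the hypersurfaces $\partial D_{\pm}=H^{-1}(V(P)\pm\varepsilon)$ lie in this set, so $V(P)\pm\varepsilon$ are regular values of $H$, $Y_a$ points strictly outward along $\partial D_{\pm}$, and $D_{\pm}$ are compact (being closed subsets of $D_{V(P)+b}$ once $c$ is small). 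On the zero section $\vec r$ vanishes and $\widetilde{\nabla V}(q,0)=\nabla V(q)\in T_qN$, so $Y_a$ is tangent to $N$ along $N$; together with the fact that $N$ is a Lagrangian this shows $D_{\pm}\cap N$ is a Lagrangian of $D_{\pm}$ with $Y_a$ tangent to it. Finally, evaluating $dH(Y_a)>0$ at a point of $\partial D_{\pm}\cap N$ gives $dH(\nabla V)>0$ there, so $V(P)\pm\varepsilon$ is a regular value of $H|_N$ as well; hence $\partial(D_{\pm}\cap N)=\partial D_{\pm}\cap N$, and $D_{\pm}\cap N$ is transverse to $\partial D_{\pm}$ because $Y_a$ is tangent to the former and transverse to the latter. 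This verifies the four axioms.

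\emph{(2).} Put $H_\tau:=(1-\tau)H_V+\tau H$ for $\tau\in[0,1]$ and $D_{\pm,\tau}:=H_\tau^{-1}\bigl((-\infty,V(P)\pm\varepsilon]\bigr)$. Each $H_\tau$ again lies in the perturbation space, coincides with $H_V$ near $P$ and off a fixed compact set, and satisfies $d_{C^\infty}(H_\tau,H_V)\le c$, so the argument of~(1) applies verbatim: $\bigl(D_{\pm,\tau},\omega_N,Y_a,D_{\pm,\tau}\cap N\bigr)$ is a Liouville quadruple for every $\tau$, and since $V(P)\pm\varepsilon$ stays a regular value of $H_\tau$ and of $H_\tau|_N$ throughout, these form a smooth family of Liouville quadruples. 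Proposition~\ref{prop:isotopyinvarianceofWFH} then gives $\WFH_*(D_{\pm,0},\omega_N,D_{\pm,0}\cap N)\cong\WFH_*(D_{\pm,1},\omega_N,D_{\pm,1}\cap N)$; as $D_{\pm,0}=D_{V(P)\pm\varepsilon}$ and $D_{\pm,1}=D_{\pm}$, this is (2).

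\emph{(3).} Let $x\colon I\to\partial D_-$ belong to $\mca{C}(\partial D_-,\partial D_-\cap N)$. By Remark~\ref{rem:WFH-} the characteristic foliation of $\partial D_-$, whose leaves span $X_H$, determines $x$ up to reparametrization; so after reparametrizing, $x$ is an orbit segment of $X_H$ contained in $\partial D_-=H^{-1}(V(P)-\varepsilon)$ with $x(\partial I)\subset\partial D_-\cap N\subset N$. Suppose $x(\partial I)\cap\Sigma\ne\emptyset$. Since $\Sigma=\partial D_-\cap D_-\bigl((0,b)\bigr)\subset D_-\bigl((0,b)\bigr)$, the reparametrized orbit $x$ then satisfies all the defining conditions of $S(H)$, hence lies in $S(H)$; but $H$ is constant equal to $V(P)-\varepsilon$ along $x$, contradicting the choice of $\varepsilon$, which was made precisely so that no element of $S(H)$ lies on the level $\{H=V(P)-\varepsilon\}$. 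Therefore $x(\partial I)\cap\Sigma=\emptyset$.

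The bulk of this is routine checking of the Liouville-quadruple axioms and of the relevant regularity, all of which is forced once the constants $b$, $c$, $a$, $\varepsilon$ are fixed as in the preceding paragraphs; the only step carrying genuine content is~(3), where the point is exactly that the generic perturbation $H$ together with the choice $\varepsilon\notin H\bigl(S(H)\bigr)$ pushes the Reeb chords of $\partial D_-$ off of $\Sigma$. I do not anticipate any real obstacle beyond keeping the choices of constants mutually consistent.
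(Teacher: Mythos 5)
Your proposal is correct and follows essentially the same route as the paper: (1) is the outward-pointing condition from Lemma \ref{lem:DHVYA} plus the choice of $c$ (you just spell out the remaining Liouville axioms, which the paper leaves implicit), (2) is the linear homotopy between $H$ and $H_V$ combined with Proposition \ref{prop:isotopyinvarianceofWFH}, and (3) is the reparametrization of a Reeb chord into an element of $S(H)$ on the level $V(P)-\varepsilon$, contradicting the choice of $\varepsilon$.
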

\begin{proof}
Since $dH(Y_a)>0$ on $H^{-1}\bigl([V(P)-b,V(P)+b]\bigr)\setminus\{P\}$, $Y_a$ points outwards on 
$\partial D_{\pm}$. This proves (1). 
To prove (2), for $0 \le t \le 1$ define
\[
H^t:=(1-t)H+tH_V, \qquad
D^t_\pm:=(H^t)^{-1}\bigl((-\infty,V(P)\pm\varepsilon]\bigr).
\]
Since $H^t \in \mca{H}_U$ and $d_{C^\infty}(H,H_t) \le c$, 
$(D^t_{\pm}, \omega_N, Y_a, D^t_{\pm}\cap N)_{0 \le t \le 1}$ is a smooth family of Liouville quadruples.
Hence (2) follows from Proposition \ref{prop:isotopyinvarianceofWFH}.
Finally we prove (3). If there exists $x: I \to \partial D_-$ in $\mca{C}(\partial D_-, \partial D_- \cap N)$ such 
that $x(\partial I) \cap \Sigma \ne \emptyset$, by reparametrizing $x$ we get an element of $S(H)$.
This contradicts the choice of $\varepsilon$.
\end{proof}

By (1) and (2) in Lemma \ref{lem:Dpm}, to prove Theorem \ref{thm:handleattaching} it is enough to show 
\begin{equation}\label{eq:pm}
\WFH_*(D_+, \omega_N, D_+ \cap N) \cong \WFH_*(D_-, \omega_N, D_- \cap N).
\end{equation}

Take $\mu \in C^\infty(\R)$ such that
\begin{enumerate}
\item $\mu'(t) \ge 0$.
\item $\mu(t)=\begin{cases}
              0 &(t \le 0) \\
              t-1/2&(t \ge 1)
              \end{cases}$.
\end{enumerate}
For $\delta>0$,  define $\mu_\delta \in C^\infty(\R)$ by
$\mu_\delta(t)=\frac{\delta}{2} + \delta \cdot \mu\biggl(\frac{t-2\varepsilon}{\delta}\biggr)$, and let
\[
D_{\delta}:= D_- \cup \bigl\{(q,p) \bigm{|} |q_{-}|^2 - 2\varepsilon \le |q_{+}|^2 + |p|^2  \le \mu_\delta(|q_{-}|^2) \bigr\}.
\]
Then, $D_- \subset D_\delta \subset D_+$ for sufficiently small $\delta>0$.

\begin{lem}\label{lem:Ddelta}
For sufficiently small $a>0$, 
$(D_\delta, \omega_N, Y_a, D_\delta \cap N)$ is a Liouville quadruple.
Moreover, $\WFH_*(D_\delta, \omega_N, D_\delta \cap N) \cong \WFH_*(D_+, \omega_N, D_+ \cap N)$.
\end{lem}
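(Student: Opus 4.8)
The plan is to verify the Liouville-quadruple axioms for $D_\delta$ by an explicit computation in the Morse chart, and then to deduce the isomorphism from Corollary \ref{cor:isotopyinvarianceofWFH} applied to the inclusion $D_\delta\subset D_+$ (the nontrivial ``handle'' half of Theorem \ref{thm:handleattaching} is the complementary comparison $\WFH_*(D_-,\ldots)\cong\WFH_*(D_\delta,\ldots)$, which is handled separately). Since $D_-\subset D_\delta\subset D_+$ and, for $\delta$ small, $D_\delta\setminus D_-$ is contained in $\pi_N^{-1}(U)\cap\{|q|^2<2b\}$ (there $H=H_V$ and the explicit formulas from the proof of Lemma \ref{lem:DHVYA} are available), the hypersurface $\partial D_\delta$ equals $\partial D_-$ outside $\pi_N^{-1}(U)$ and, inside $\pi_N^{-1}(U)$, is the level set $\{G=0\}$ of $G:=|q_+|^2+|p|^2-\mu_\delta(|q_-|^2)$; since $\mu_\delta(t)=t-2\varepsilon$ for $t\ge2\varepsilon+\delta$, on that region $\{G=0\}$ coincides with $\partial D_-$, so the two descriptions glue to a single $C^\infty$ hypersurface. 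Axiom~(1) ($L_{Y_a}\omega_N=\omega_N$) holds everywhere on $T^*N$, and axiom~(3) holds because $Y_a$ restricts to $a\nabla V$ along $N$, hence is tangent to $D_\delta\cap N$.

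For axiom~(2): on the part of $\partial D_\delta$ contained in $\partial D_-$, $Y_a$ points strictly outwards by Lemma \ref{lem:Dpm}(1); on $\{G=0\}$, using $Y_a=-aq_-\partial_{q_-}+(1+a)p_-\partial_{p_-}+aq_+\partial_{q_+}+(1-a)p_+\partial_{p_+}$ one finds
\[
dG(Y_a)=2a|q_+|^2+2(1+a)|p_-|^2+2(1-a)|p_+|^2+2a\,\mu_\delta'(|q_-|^2)\,|q_-|^2 ,
\]
a sum of nonnegative terms (for $0<a<1$) which is positive on $\{G=0\}$ because there $|q_+|^2+|p|^2=\mu_\delta(|q_-|^2)\ge\delta/2>0$. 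For axiom~(4): inside $\pi_N^{-1}(U)$, $D_\delta\cap N=\{|q_+|^2\le\mu_\delta(|q_-|^2)\}$ has boundary $\{G=0\}\cap N=\partial D_\delta\cap N$, and along $N$ the form $dG$ annihilates the vertical directions, so $N\pitchfork\partial D_\delta$; outside $\pi_N^{-1}(U)$ transversality and $\partial(D_\delta\cap N)=(D_\delta\cap N)\cap\partial D_\delta$ are inherited from Lemma \ref{lem:Dpm}(1). Finally $\pi_1(D_\delta,D_\delta\cap N)=\pi_2(D_\delta,D_\delta\cap N)=0$, since $D_\delta$ is homotopy equivalent, rel $D_\delta\cap N$, to the zero section $D_\delta\cap N$. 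This shows $(D_\delta,\omega_N,Y_a,D_\delta\cap N)$ is a Liouville quadruple.

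For the isomorphism I would invoke Corollary \ref{cor:isotopyinvarianceofWFH} with $M=D_+$ (a Liouville quadruple by Lemma \ref{lem:Dpm}(1)), $M'=D_\delta$, $X=Y_a$, and auxiliary function $H|_{D_+}$. One checks: (i) $D_\delta$ is a compact submanifold of $\interior D_+$, because on $\{G=0\}\cap\pi_N^{-1}(U)$ one has $H_V=V(P)+\tfrac12\bigl(\mu_\delta(|q_-|^2)-|q_-|^2\bigr)\le V(P)+\delta/4<V(P)+\varepsilon$ for $\delta$ small, while on the remainder of $\partial D_\delta$ one has $H=V(P)-\varepsilon<V(P)+\varepsilon$, and $V(P)\pm\varepsilon$ are regular values; (ii) $D_\delta$ is a Liouville quadruple, by the above; (iii) $d(H|_{D_+})(Y_a)>0$ on $D_+\setminus\interior D_\delta$, because this set is contained in $D_+\setminus\interior D_-\subset H^{-1}\bigl([V(P)-\varepsilon,V(P)+\varepsilon]\bigr)\subset H^{-1}\bigl([V(P)-b,V(P)+b]\bigr)$ and omits $P$ (since $P\in\interior D_\delta$), so the positivity follows from the choice of $H$ fixed just before Lemma \ref{lem:perturb}. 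Corollary \ref{cor:isotopyinvarianceofWFH} then yields $\WFH_*(D_\delta,\omega_N,D_\delta\cap N)\cong\WFH_*(D_+,\omega_N,D_+\cap N)$.

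The main obstacle is bookkeeping rather than anything conceptual: one must verify that $\partial D_\delta$ is genuinely a smooth hypersurface — this is exactly why $\mu_\delta$ was arranged to equal $t\mapsto t-2\varepsilon$ on $[2\varepsilon+\delta,\infty)$, so that the handle boundary $\{G=0\}$ matches $\partial D_-$ $C^\infty$-smoothly on their overlap — and one must check that the smallness hypotheses on $a$ and $\delta$ (namely $0<a<1$, the smallness required by Lemmas \ref{lem:Dpm} and \ref{lem:DHVYA}, $D_\delta\setminus D_-\subset\pi_N^{-1}(U)\cap\{|q|^2<2b\}$, and $\delta$ small enough that $H_V<V(P)+\varepsilon$ on $\{G=0\}$) can all be met simultaneously, so that the local chart computation together with Lemma \ref{lem:Dpm}(1) accounts for all of $\partial D_\delta$ and all of $D_+\setminus\interior D_\delta$.
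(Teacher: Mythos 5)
Your proposal is correct and follows essentially the same route as the paper: the outward-pointing condition is checked by the explicit coordinate formula for $Y_a$ on the handle region (using $\mu_\delta'\ge 0$ and $0<a<1$) and by smallness of $a$ on $\partial D_\delta\setminus\pi_N^{-1}(U)=\partial D_-\setminus\pi_N^{-1}(U)$, and the isomorphism is deduced from Corollary \ref{cor:isotopyinvarianceofWFH} via $dH(Y_a)>0$ on $D_+\setminus\interior D_\delta$. You simply spell out more of the bookkeeping (smooth gluing of $\{G=0\}$ with $\partial D_-$, the remaining quadruple axioms, and the containment $D_+\setminus\interior D_\delta\subset H^{-1}\bigl([V(P)-b,V(P)+b]\bigr)\setminus\{P\}$) that the paper leaves implicit.
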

\begin{proof}
To prove the first assertion, it is enough to show that
$Y_a$ points strictly outwards on $\partial D_\delta$.
On $\pi_N^{-1}(U)$, 
\[
Y_a(q,p)=-aq_{-}\partial_{q_{-}}+(1+a)p_{-}\partial_{p_{-}}
         +aq_{+}\partial_{q_{+}}+(1-a)p_{+}\partial_{p_{+}}.
\]
If $a \in (0,1)$, then $-a<0$ and $1+a, a, 1-a>0$. Therefore $Y_a$ points strictly outwards on $\partial D_\delta \cap 
\pi_N^{-1}(U)$, since $\mu'_\delta(t) \ge 0$.
On the other hand, since $\partial D_\delta \setminus \pi_N^{-1}(U)= \partial D_- \setminus \pi_N^{-1}(U)$, 
$Y_a$ points outwards on $\partial D_\delta \setminus \pi_N^{-1}(U)$ for sufficiently small $a>0$.

The latter assertion follows from Corollary \ref{cor:isotopyinvarianceofWFH}, since
$dH(Y_a)>0$ on $D_+ \setminus D_\delta$ for sufficiently small $a>0$.
\end{proof}

By Lemma \ref{lem:Ddelta}, (\ref{eq:pm}) is reduced to:

\begin{lem}\label{lem:handleattaching}
$\WFH_*(D_{-}, \omega_N, D_{-} \cap N) \cong \WFH_*(D_\delta, \omega_N, D_\delta \cap N)$.
\end{lem}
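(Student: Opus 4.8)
The plan is to adapt Cieliebak's argument for invariance of symplectic homology under subcritical handle attaching to the present wrapped/Lagrangian setting. The geometric picture is that $D_\delta$ is obtained from $D_-$ by attaching a small "handle-like" bump to the Lagrangian $D_- \cap N$ along the sphere $\Sigma = \{p = q_+ = 0,\ |q_-|^2 = 2\varepsilon\}$, and the Reeb dynamics on $\partial D_\delta$ differs from that on $\partial D_-$ only in a neighborhood of this bump. Because the handle is attached along a subcritical sphere (here $\ind P = k \le n-1$), no new short Reeb chords of $\partial(D_\delta \cap N)$ are created on $\partial D_\delta$: the chords that would appear in the handle region have action bounded below away from $0$ and, more importantly, their indices can be pushed arbitrarily high (or they can be cancelled against each other). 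So the plan is:

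\textbf{Step 1: Reduce to a chain-level comparison via cofinal families.} Using the direct-limit definition of $\WFH_*$, it suffices to compare, for each large action window, the Floer complexes $\WFC_*(H_-)$ and $\WFC_*(H_\delta)$ for suitably chosen admissible Hamiltonians on $\hat D_-$ and $\hat D_\delta$ respectively. I would build cofinal families $(H^\nu_-)_\nu$ on $\hat D_-$ and $(H^\nu_\delta)_\nu$ on $\hat D_\delta$ that agree, in the region $D_-$ (which both completions contain), and are linear of the same slope on the cylindrical ends; the only discrepancy is over the handle region $D_\delta \setminus D_-$, where $H^\nu_\delta$ has a "bump" profile. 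Monotone homotopies between them, together with the $C^0$ estimate of Theorem \ref{thm:c0estimate}, give chain maps in both directions.

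\textbf{Step 2: Identify the extra generators and show they do not contribute.} The chains $\mca{C}(H^\nu_\delta)$ contain $\mca{C}(H^\nu_-)$ together with extra orbits localized in the handle region. As in Cieliebak, these extra orbits come in pairs (or have index growing with a parameter of the bump profile), so one arranges that below any fixed action/index threshold they either do not appear or cancel in homology. Concretely I would choose the bump profile $\mu_\delta$ (already introduced, with $\mu'_\delta \ge 0$) and the slope of $H^\nu_\delta$ so that the Hamiltonian chords created in $\{|q_-|^2 - 2\varepsilon \le |q_+|^2 + |p|^2 \le \mu_\delta(|q_-|^2)\}$ either lie above the relevant action window or form acyclic subcomplexes. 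Part (3) of Lemma \ref{lem:Dpm} (no Reeb chords of $\partial D_-$ touch $\Sigma$) and Lemma \ref{lem:DHVYA}/\ref{lem:Ddelta} (the Liouville field still points outward) are exactly what is needed to keep the handle region "dynamically inert."

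\textbf{Step 3: Conclude the isomorphism.} Once the extra generators are controlled, the monotone continuation maps $\WFH_*(H^\nu_-) \to \WFH_*(H^\nu_\delta)$ and back become isomorphisms on each piece of the direct system (an algebraic "sandwich" argument: the composites are the monotone maps within each cofinal family, hence isomorphisms in the limit), and passing to the direct limit gives $\WFH_*(D_-, \omega_N, D_- \cap N) \cong \WFH_*(D_\delta, \omega_N, D_\delta \cap N)$.

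\textbf{Main obstacle.} The hard part is Step 2: making precise the claim that the handle region creates no essential generators. This requires (a) an explicit analysis of the Hamiltonian chords of the bump Hamiltonian over $\{q_+ = p = 0\}$-directions versus the $q_+, p$ directions, showing the unstable "$q_-$" directions of the handle are subcritical so the new chords have large index; and (b) a $C^0$-confinement argument showing Floer trajectories of the continuation homotopy cannot escape into or wander around the handle region in an uncontrolled way — this is where the somewhat broader class of Hamiltonians and the refined $C^0$ estimate of Section 3 are actually used. I would model the local computation on the explicit formulas for $H_V$, $dH_V$, $Y_a$ over $\pi_N^{-1}(U)$ already displayed in Lemma \ref{lem:DHVYA}, and follow Cieliebak \cite{Cieliebak} for the index bookkeeping and the neck-stretching/confinement estimates.
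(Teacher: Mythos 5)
Your overall strategy (cofinal families, extra generators confined to the handle with high index, $C^0$-confinement of Floer trajectories) is the same Cieliebak-style approach the paper takes, but there are two genuine gaps in the proposal. First, Step 1 as written does not quite make sense: $\hat{D_-}$ and $\hat{D_\delta}$ are different completions, so there is no monotone homotopy "between" Hamiltonians living on them, and a sandwich argument with continuation maps in both directions is not available (a decreasing homotopy does not give a chain map). The paper instead works entirely inside the single completion of the handle-attached domain: it first uses Lemma \ref{lem:Moser} and Proposition \ref{prop:isotopyinvarianceofWFH} to normalize the Liouville field near $\Sigma$ so that the handle is glued in a standard radial model, and then chooses Hamiltonians $G_i$ on the big completion whose restriction to $D_-$ equals $F_i$ and for which $D_-$ is an invariant set of $X_{G_i}$; the conclusion is then a literal chain isomorphism $\WFC_{\le m}(F_i)\cong\WFC_{\le m}(G_i)$ compatible with the monotone maps, with no need for an inverse continuation map.

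Second, and more seriously, your Step 2 only discusses chords \emph{localized in} the handle, but the dangerous generators are the \emph{mixed} Hamiltonian chords that travel partly over $\partial D_-$ and partly through the handle region: these can be neither identified with chords of $F_i$ nor dismissed by an index computation in the standard handle model. Ruling them out is the heart of the paper's proof and requires a quantitative statement (Lemma \ref{lem:delta}): using Lemma \ref{lem:Dpm}(3) and a compactness argument, any Reeb chord of $\partial D_-$ that meets the attaching region $\psi_1(B_{\delta})$ and escapes a fixed neighborhood of the handle has length greater than $\alpha$ once $\delta<\delta(\alpha)$. One must therefore shrink the region where $G_i$ deviates from the linear profile ($\delta_i<\delta(\alpha_i)$) \emph{as the slope $\alpha_i$ grows}; this coupling between slope and attaching-region size is absent from your plan, and without it the splitting $\mca{C}(G_i)=\mca{C}_-(G_i)\sqcup\mca{C}_+(G_i)$ with $\mca{C}_+(G_i)$ contained in the handle (Lemma \ref{lem:C+}) fails. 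Your item (b), the confinement of Floer trajectories to $D_-$, is correct and corresponds to Lemma \ref{lem:annuluslemma}, but it addresses the differentials and continuation maps, not the generator problem above.
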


Lemma \ref{lem:handleattaching} is proved in the next section.
In the remainder of this section, we prove Lemma \ref{lem:perturb}.

\begin{proof}

Define $S^-(H)$ and $S^+(H)$ by
\begin{align*}
S^-(H)&=\bigl\{x: [0,l] \to T^*N \bigm{|}l>0, \dot{x}=X_H(x),\,x(0) \in D_-\bigl((0,b)\bigr),\,x(l) \in N \bigr\},  \\
S^+(H)&=\bigl\{x: [0,l] \to T^*N \bigm{|}l>0, \dot{x}=X_H(x),\,x(0) \in N,\,x(l) \in D_-\bigl((0,b)\bigr) \bigr\}.
\end{align*}
In the following, we prove that there exists $\mca{H}^- \subset \mca{H}$ which is of second category in $\mca{H}$
and for any $H \in \mca{H}^-$, $S^-(H)$ is countable.
By parallel arguments,
we can also show that there exists $\mca{H}^+ \subset \mca{H}$ which is of second category in $\mca{H}$ and
for any $H \in \mca{H}^+$, $S^+(H)$ is countable.
Then, $\mca{H}':=\mca{H}^- \cap \mca{H}^+$ satisfies the requirements of Lemma \ref{lem:perturb}.

In the following, we prove that there exists $\mca{H}^-$ as above.
The proof consists of 9 steps.

\textbf{Step1:}
By definition of $\mca{H}$, any $H \in \mca{H}$ satisfies $H \equiv H_V$ on $\pi_N^{-1}\bigl(D\bigl([0,2b]\bigr)\bigr)$.
Hence, following (1), (2) holds for any $H \in \mca{H}$.
\begin{enumerate}
\item If $x:[0,t] \to \pi_N^{-1}\bigl(D\bigl([0,2b]\bigr)\bigr)$ satisfies $t>0$, 
$\dot{x}=X_H(x)$ and $x(0) \in D_-\bigl((0,2b]\bigr)$, then $x(t) \notin N$.
\item There exists $c>0$, which is independent of $H$ and 
such that: if $x:\R \to T^*N$ satisfies 
$\dot{x}=X_H(x)$ and $x(0) \in D_-\bigl([0,b]\bigr)$ then $x\bigl([0,c]\bigr) \subset \pi_N^{-1}\bigl(D\bigl([0,2b]\bigr)\bigr)$.
\end{enumerate}

\textbf{Step2:}
Let $\mca{B}$ be the set of $(l,x)$ where $l>0$ and $x \in L^{1,2}\bigl([0,1], T^*N\bigr)$, such that:
\begin{enumerate}
\item $x(0) \in D_-\bigl((0,b)\bigr)$, $x(1) \in N$.
\item If $\frac{1}{2} \le t \le 1-\frac{c}{l}$, then $x(t) \ne x(0)$.
\end{enumerate}
It is easily verified that $\mca{B}$ is a Banach submanifold of $(0,\infty) \times L^{1,2}\bigl([0,1], T^*N\bigr)$.
Let $\mca{E}$ be a Banach vector bundle over $\mca{B}$ defined by
$\mca{E}_{(x,l)}=L^2\bigl(x^*T(T^*N)\bigr)$.
For $H \in \mca{H}$, define $s_H \in \Gamma(\mca{E})$ by $s_H(x,l)=\dot{x}(t)-l\cdot X_H\bigl(x(t)\bigr)$.
If $s_H(x,l)=0$, then $x$ satisfies following conditions:
\begin{enumerate}
\item[(a)] $x\bigl([0,1]\bigr) \cap \bigl\{|p|^2<2b\bigr\} \setminus \pi_N^{-1}\bigl(D\bigl([0,2b]\bigr)\bigr) \ne \emptyset$.
\item[(b)] $x|_{[0,1)}$ is injective.
\end{enumerate}
By (1) in step 1 and $x(1) \in N$, 
$x\bigl([0,1]\bigr)$ is not contained in $\pi_N^{-1}\bigl(D\bigl([0,2b]\bigr)\bigr)$.
Moreover, if $x(t)=\bigl(q(t),p(t)\bigr) \in \pi_N^{-1}\bigl(D\bigl([0,2b]\bigr)\bigr)$,
\[
\big\lvert p(t) \big\rvert ^2 = 2\bigl( H\bigl(x(t)\bigr) - V\bigl(q(t)\bigr)\bigr)
=2\bigl(V\bigl(q(0)\bigr)-V\bigl(q(t)\bigr)\bigr) \le 2b-\big\lvert q(0) \big\rvert^2.
\]
(a) follows form this at once.

To prove (b), first notice that if there exists $1-\frac{c}{l}<t<1$ with $x(t)=x(0)$, then 
$x(1) \notin N$ by (1), (2) in step 1. Hence $x(t) \ne x(0)$ for $1-\frac{c}{l}<t<1$.
Hence, if $x|_{[0,1)}$ is not injective, there exists largest $0<t<1$ such that $x(t)=x(0)$, and
$t \le 1-\frac{c}{l}$. 
Moreover, if $t<\frac{1}{2}$, then $x(2t)=x(0)$ but this contradicts maximality of $t$. Hence
$\frac{1}{2} \le t \le 1-\frac{c}{l}$, but this contradicts (2) in definition of $\mca{B}$.

\textbf{Step3:} Take any almost complex structure $J$ on $T^*N$, which is compatible with $\omega_N$.
$J$ induces the associated metric and its Levi-Civita connection on $T^*N$, and also on 
$\mca{E} \to \mca{B}$.
Then,  $(\nabla s_H)_{(x,l)}: T_{(x,l)}\mca{B} \to \mca{E}_{(x,l)}$ is a Fredholm operator. 
In particular, $\Cok \nabla s_H= (\Im \nabla s_H)^{\perp} \subset \mca{E}_{(x,l)}$ is finite dimensional.
Note that the index of this operator is
\[
\dim D_-\bigl((0,b)\bigr) + \dim N + 1 - \dim T^*N = k+1-n.
\]

Let $\zeta \in \Cok \nabla s_H$, i.e. $\zeta$ is orthogonal to 
\[
\nabla_\xi (s_H)= \partial_t \xi - l(\nabla_\xi J \cdot \nabla H + J \cdot \nabla_\xi(\nabla H))
=:\partial_t \xi-l A(t)\cdot \xi(t), 
\]
for any $\xi \in L^{1,2}\bigl(x^*(T(T^*N))\bigr)$ with
$\xi(0) \in T_{x(0)}D_-\bigl((0,b)\bigr)$ and 
$\xi(1) \in T_{x(1)}N$.
Hence we obtain ($A^{*}(t)$ is the adjoint operator of $A(t)$):
\[
\bigl(\partial_t+lA^{*}(t)\bigr)\zeta(t)=0,\qquad
\zeta(0) \in \bigl(T_{x(0)}D_-\bigl((0,b)\bigr)\bigr)^\perp,\qquad
\zeta(1) \in \bigl(T_{x(1)}N\bigr)^\perp.
\]

\textbf{Step4:} We claim that if $s_H(x,l)=0$, then $\bigl(\partial_t+lA^{*}(t)\bigr)(\nabla H \circ x)=0$ .
This is verified as follows. 
If $(y,l) \in \mca{B}$ satisfies $y(0)=x(0)$ and $y(1)=x(1)$, then
$(\nabla H \circ y) \cdot s_H(y,l)=H\bigl(y(1)\bigr)-H\bigl(y(0)\bigr)=(\nabla H \circ x) \cdot s_H(x,l)$.
Hence, if $\xi \in L^{1,2}\bigl(x^*(T(T^*N))\bigr)$ satisfies $\xi(0)=0$ and $\xi(1)=0$, then
$\nabla_{\xi}(\nabla H \cdot s_H)=0$ at $(x,l)$. Since $s_H(x,l)=0$, it follows that $\nabla H \cdot \nabla_\xi(s_H)=0$.
Since this holds for any $\xi \in L^{1,2}\bigl(x^*(T(T^*N))\bigr)$ such that $\xi(0)=0$ and $\xi(1)=0$, the claim follows.

\textbf{Step5:} 
Let $m \in \Z_{\ge 2}$, and let $\mca{H}^m$ be an affine space consists of $H \in C^m(T^*N)$
such that $\supp (H-H_V) \subset \bigl\{|p|^2 \le 2b\bigr\} \setminus \pi_N^{-1}\bigl(D\bigl([0,2b)\bigr)\bigr)$.
$\mca{H}^m$ is an affine Banach space with $C^m$ norm.
Consider Banach vector bundle $\mca{H}^m\times\mca{E} \to \mca{H}^m\times\mca{B}$, and define
a section of this bundle $s \colon (H,x,l) \mapsto s_H(x,l)$.
$X_H$ is $C^{m-1}$ class vector field, hence $s$ is a $C^{m-1}$ class section.
We prove that if $s(H,x,l)=0$, then $\nabla s$ is surjective at $(H,x,l)$.
If this is not true, there exists $\zeta \in \Cok \nabla s_H(x,l)$, such that $\zeta \ne 0$ and
$\zeta \cdot (J\nabla h)\circ x=0$ for any $h \in \mca{H}^m-H_V$.
By (a) in step 2, there exists $0<t_0<t_1<1$ such that $x\bigl([t_0,t_1]\bigr) \subset \bigl\{|p|^2<2b\bigr\} \setminus
\pi^{-1}\bigl(D\bigl([0,2b]\bigr)\bigr)$.
Moreover, $x|_{[t_0,t_1]}$ is embedding by (b).
If a section $\eta$ of $x^*\bigl(T(T^*N)\bigr)|_{[t_0,t_1]}$ satisfies $\int_{t_0}^{t_1} \eta(t)\cdot\dot{x}(t) dt=0$
and $\supp \eta \subset (t_0,t_1)$, there exists $h \in \mca{H}^m-H_V$ such that
$\eta(t)=\nabla h\bigl(x(t)\bigr)$.
Hence $\zeta =a \nabla H \circ x$ on $(t_0,t_1)$ for some constant $a$.
Since $\zeta$ and $\nabla H \circ x$ both vanishes by the differential operator $\partial_t+lA^*(t)$, 
$\zeta=a \nabla H \circ x$ on $[0,1]$.
In particular, $\zeta(0)=a \nabla H(x(0))$.
Hence $a \nabla H(x(0)) \in \bigl(T_{x(0)}D_-\bigl((0,b)\bigr)\bigr)^\perp$.
On the other hand, $dH|_{T_{x(0)}D_-((0,b))} \ne 0$.
Hence we obtain $a=0$, contradicting $\zeta \ne 0$.

\textbf{Step6:} By step 4, $s^{-1}(0)$ is a $C^{m-1}$ class Banach submanifold of $\mca{H}^m \times \mca{B}$.
Consider $\pi_{\mca{H}^m}: s^{-1}(0) \to \mca{H}^m; (H,x,l) \mapsto H$.
This is a $C^{m-1}$ class Fredholm map of index $k+1-n \le 0$ (recall $k \le n-1$).
Hence by Sard-Smale theorem, the set of regular value of $\pi_{\mca{H}^m}$ (denote by $\mca{H}^m_\reg$)
 is of second category in $\mca{H}^m$.
Note that  $H \in \mca{H}^m_{\reg}$ if and only if 
$s_H: \mca{B} \to \mca{E}$ is transversal to $0$.

\textbf{Step7:}
For any $\delta>0$, let
\begin{align*}
\mca{B}(\delta):&=\bigl\{(x,l) \in \mca{B} \bigm{|} x(0) \in D_-\bigl([\delta,b-\delta]\bigr), \delta \le l \le \tfrac{1}{\delta}\bigr\},\\
\mca{H}^m_{\reg,\delta}:&= \mca{H}^m \setminus \pi_{\mca{H}^m}\bigl(\Crit(\pi_{\mca{H}^m}) \cap \mca{B}(\delta)\bigr).
\end{align*}
Obviously, $\mca{H}^m_\reg=\bigcap_{\delta>0} \mca{H}^m_{\reg,\delta}$.
We show that $\mca{H}^m_{\reg,\delta}$ is open in $\mca{H}^m$.
If $(H_n,x_n,l_n)_n$ is a sequence on $\Crit(\pi_{\mca{H}^m}) \cap \mca{B}(\delta)$ and 
$(H_n)_n$ converges to some $H_\infty$ in $\mca{H}^m$, 
then certain subsequence of $(x_n,l_n)$ converges to some $(x_\infty,l_\infty)$, hence 
$(H_\infty,x_\infty,l_\infty) \in \Crit(\pi_{\mca{H}^m}) \cap \mca{B}(\delta)$.
Therefore $\mca{H}^m \setminus \mca{H}^m_{\reg,\delta}$ is closed in $\mca{H}^m$.

\textbf{Step8:}
For any $\delta>0$, let 
$\mca{H}_{\reg,\delta}:=\mca{H}^m_{\reg,\delta} \cap \mca{H}$ (this does not depend on $m$).
We show that $\mca{H}_{\reg,\delta}$ is open dense set in $\mca{H}$.
Openness is clear since 
$\mca{H}^m_{\reg,\delta}$ is open in $\mca{H}^m$ and the inclusion map $\mca{H} \to \mca{H}^m$ is continuous.

To show that $\mca{H}_{\reg,\delta}$ is dense in $\mca{H}$, 
first notice that $\mca{H}^m_{\reg,\delta}$ is dense in $\mca{H}^m$ by step 6.
Hence for any $H \in \mca{H}$, there exists $H_m \in \mca{H}^m_{\reg,\delta}$ such that
$|H-H_m|_{C^m} \le 2^{-m}$. 
Since $\mca{H}^m_{\reg,\delta}$ is open in $\mca{H}^m$, there exists $0<c<2^{-m}$ such that
$c$-neighborhood of $H_k$ with respect to $|\,\cdot\,|_{C^m}$ is contained in $\mca{H}^m_{\reg,\delta}$.
Then, take $H'_m \in \mca{H}$ so that $|H_m-H'_m|_{C^m} <c$, then 
$H'_m \in \mca{H}_{\reg,\delta}$ and $|H-H'_m|_{C^m} <2^{1-m}$, hence
$\lim_{m \to \infty} H'_m=H$ in $\mca{H}$.
This shows that $\mca{H}_{\reg,\delta}$ is dense in $\mca{H}$.

\textbf{Step9:}
Let $\mca{H}_\reg:=\bigcap_{\delta>0} \mca{H}_{\reg,\delta}$.
$\mca{H}_\reg$ is of second category in $\mca{H}$ by step 8.
Note that $H \in \mca{H}_\reg$ if and only if $s_H: \mca{B} \to \mca{E}$ is transversal to $0$.
Since virtual dimension of $s_H^{-1}(0)$ is $1+k-n \le 0$, 
$s_H^{-1}(0)$ is a countable set for any $H \in \mca{H}_\reg$.
Therefore it is enough to show that if $s_H^{-1}(0)$ is countable, 
then $S^-(H)$ is countable.

Let $S^-_0(H):=\{x \in S^-(H) \mid \text{$x$ is injective}\}$, and
$S^-_1(H):=S^-(H) \setminus S^-_0(H)$.
$S^-_0(H)$ is countable, since there exists injection $S^-_0(H) \to s_H^{-1}(0)$ which maps 
$x: [0,l] \to T^*N$ to $[0,1] \to T^*N; t \mapsto x(tl)$.
Hence it is enough to show that $S^-_1(H)$ is countable.
Take $x \in S^-_1(H)$. Since $x$ is not constant, there exists smallest $0<t<l$ such that $x(t)=x(0)$.
Then $(y,t) \in s_H^{-1}(0)$ where $y \colon [0,1] \to T^*N; \tau \mapsto x(t\tau)$.
Moreover, there are only countably many $\theta>0$ such that $x(\theta) \in N$.
Hence we obtain map $S^-_1(H) \to s_H^{-1}(0)$, such that preimage of each element of $s_H^{-1}(0)$ is countable.
Therefore, $S^-_1(H)$ is countable. This completes the proof.
\end{proof}

\section{Handle attaching}
In this section, we prove Lemma \ref{lem:handleattaching}.
In 5.1, we prove a preliminary lemma on Floer trajectories (Lemma \ref{lem:annuluslemma}).
In 5.2, we give a proof of \ref{lem:handleattaching}.
\subsection{Lemma on Floer trajectories}
\begin{lem}\label{lem:annuluslemma}
Let $(M,\omega,X,L)$ be a Liouville quadruple, and $\lambda:=i_X\omega$.
Let $M^{\inn}$ be a compact submanifold of $M$ such that 
$(M^\inn, \omega|_{M^\inn}, X|_{M^\inn}, L \cap M^\inn)$ is a Liouville quadruple.
We denote the Reeb vector field and the contact distribution on $(\partial M^\inn, \lambda)$ 
by $R^\inn$, $\xi^\inn$.

Let $H, H' \in \mca{H}_{\ad}(\hat{M})$, and $(H^s)_s$ be a monotone homotopy from $H$ to $H'$ such that
$\partial_s H(x) \ge 0$ for any $s \in \R, x \in \hat{M}$.
Let $(J^s_t)_{s,t}$ be a family of elements of $\mca{J}(\hat{M})$.

Assume that there exists 
$a \in C^\infty(\R)$ and $0<\nu<1$ with following properties:
\begin{enumerate}
\item $H^s(z,\rho)=a(s)(\rho-\nu)$ on $M^\inn \setminus M^\inn(\nu^{\frac{1}{2}})$.
\item For any $s \in \R$ and $t \in [0,1]$, 
$J^s_t$ preserves $\overline{\xi^\inn}$ and $J^s_t(\partial_\rho)=\rho^{-1}\overline{R^\inn}$ on 
$M^\inn \setminus M^\inn(\nu^{\frac{1}{2}})$.
\end{enumerate}
Assume that $x_- \in \mca{C}(H)$, $x_+ \in \mca{C}(H')$ satisfy
$x_-\bigl([0,1]\bigr), x_+\bigl([0,1]\bigr) \subset M^\inn$.
Then, for any $u \in \hat{\mca{M}}_{(H^s,J^s_t)_{s,t}}(x_-, x_+)$, 
$u\bigl(\R \times [0,1]\bigr) \subset M^\inn$.
\end{lem}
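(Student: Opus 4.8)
The plan is to prove the confinement by an \emph{integrated maximum principle} in the style of \cite{FloerHofer}, using the contact-type form of $H^s$ and $J^s_t$ near $\partial M^\inn$ together with the monotonicity of the homotopy. I argue by contradiction, assuming $u\bigl(\R\times[0,1]\bigr)\not\subset M^\inn$. First a preliminary observation on the asymptotics: on $M^\inn\setminus M^\inn(\nu^{\frac{1}{2}})$ hypothesis (2) makes $X_H=a(-s_0)R^\inn$ and $X_{H'}=a(s_0)R^\inn$, both tangent to every slice $\{\rho=\mathrm{const}\}$; hence a chord of $H$ (resp.\ of $H'$) whose image met $\partial M^\inn\times(\nu^{\frac{1}{2}},1]$ would remain in one such slice and project to a genuine Reeb chord of $\partial L^\inn$ of $\lambda^\inn$-action $a(-s_0)$ (resp.\ $a(s_0)$), contradicting (1). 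Thus $x_\pm\bigl([0,1]\bigr)\subset M^\inn(\nu^{\frac{1}{2}})$, and since Floer trajectories converge uniformly, $u(s,\cdot)$ lies in the interior of $M^\inn(c)$ for all sufficiently large $|s|$ and every $c\in(\nu^{\frac{1}{2}},1)$.

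Now fix, by Sard's theorem, a regular value $c\in(\nu^{\frac{1}{2}},1)$ of the function $\rho\circ u$ on $u^{-1}\bigl(\partial M^\inn\times(\nu^{\frac{1}{2}},1]\bigr)$, and set $\Omega:=\{(s,t)\mid u(s,t)\notin M^\inn(c)\}$. By the previous paragraph $\Omega$ is a bounded, nonempty open subset of $\R\times[0,1]$ with piecewise smooth boundary: the identity $\partial_t(\rho\circ u)=0$ along $\R\times\{0,1\}$, proved exactly as in lemma \ref{lem:boundary}, forces the free part $\gamma:=\{(s,t)\in\R\times(0,1)\mid\rho(u(s,t))=c\}$ of $\partial\Omega$ (which is nonempty) to meet $\R\times\{0,1\}$ transversally, while on the remaining part of $\partial\Omega$, contained in $\R\times\{0,1\}$, both $dt$ and $u^*\hat\lambda$ vanish (the latter since $\hat\lambda|_{\hat L}=0$). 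The energy identity of lemma \ref{lem:energyformula}, in its Stokes form over $\Omega$ and valid globally because $\hat\omega=d\hat\lambda$ and $\partial_sH^s\ge0$, then reads
\[
0\le\int_\Omega|\partial_s u|_{J^s_t}^2\,ds\,dt
 = -\int_\gamma u^*\hat\lambda+\int_\gamma(H^s\circ u)\,dt-\int_\Omega\partial_sH^s(u)\,ds\,dt .
\]
Along $\gamma$ we have $\rho\circ u\equiv c\le1$, so $\hat\lambda=c\,\lambda^\inn$ and $H^s\circ u=a(s)(c-\nu)$; and by the same computation as in the proof of lemma \ref{lem:ellipticestimate} — considerably simpler here, since $c_{J^s_t}\equiv1$ by hypothesis (3) and $X_{H^s}=a(s)R^\inn$ — the contact-type hypotheses give $\partial_s(\rho\circ u)=\rho\,(a(s)-f_t)$ and $\partial_t(\rho\circ u)=\rho\,f_s$ with $f_\bullet:=\lambda^\inn(\partial_\bullet u)$. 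Writing $n$ for the outward unit normal of $\Omega$ along $\gamma$, the vector $(a(s)-f_t,f_s)$ equals $\mu\,n$ for a scalar $\mu<0$ (the sign because $n$ points toward $M^\inn(c)$, i.e.\ in the direction of decreasing $\rho\circ u$; $\mu\ne0$ since $c$ is a regular value), whence a short computation gives $u^*\lambda^\inn=\bigl(-\mu+a(s)\,n_s\bigr)\,d\ell$ along $\gamma$. The divergence theorem applied to the vector field $a(s)\,\partial_s$ on $\Omega$ gives $\int_\gamma a(s)\,n_s\,d\ell=\int_\Omega a'(s)\,ds\,dt=:Q\ge0$ (monotonicity forces $a'\ge0$, using $\nu<1$), and hence $\int_\gamma a(s)(c-\nu)\,dt=(c-\nu)Q$. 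Setting $P:=\int_\gamma(-\mu)\,d\ell>0$, the displayed identity collapses to
\[
0\le\int_\Omega|\partial_s u|_{J^s_t}^2\,ds\,dt
 = -c\,(P+Q)+(c-\nu)Q-\int_\Omega\partial_sH^s(u)\,ds\,dt\le -c\,P-\nu\,Q<0 ,
\]
which is absurd. Hence $u\bigl(\R\times[0,1]\bigr)\subset M^\inn$.

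The two contact-type computations invoked above — the boundary tangency and the splitting of the Floer equation into its $\xi^\inn$, $R^\inn$ and $\partial_\rho$ components — are local and essentially routine adaptations of the arguments of \S3. The one genuinely delicate point is the final sign accounting: one must verify that the three favorable inputs, namely the contact-type sign $\mu<0$, the monotonicity of the homotopy ($a'\ge0$ and $\partial_sH^s\ge0$), and the strict positivity $\nu>0$, combine to make the right-hand side of the last display \emph{strictly} negative. This is precisely where the hypotheses $0<\nu<1$ and ``$(H^s)_s$ monotone'' are used, and it is the step I would write out in full.
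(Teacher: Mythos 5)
Your proposal is correct and follows essentially the same route as the paper's proof: apply the Stokes/energy identity over the region where the trajectory escapes a generic level $\partial M^{\inn}\times\{c\}$, kill the horizontal boundary contribution using $\hat{L}$ and $dt=0$, and extract a sign contradiction from the contact-type hypotheses on $J^s_t$ and $H^s$ together with monotonicity and $\nu>0$. The only (harmless) differences are that you evaluate the free boundary term by an explicit normal/tangent decomposition of the Floer equation where the paper conjugates $X_{H^s}\otimes dt-du$ by $J^s_t$ and $j$, and that you obtain strictness from $P>0$ where the paper instead invokes $\partial_s u\not\equiv 0$ on the escaping region.
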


Following proof is based on \cite{AbouzaidSeidel}, section 7. 

\begin{proof}
Take $s_0>0$ so that $H^s = \begin{cases}
                             H^{-s_0} &(s \le -s_0) \\
                             H^{s_0}  &(s \ge s_0)
                            \end{cases}$.
Since $H, H' \in \mca{H}_\ad(\hat{M})$, $a(-s_0), a(s_0) \notin \mca{A}(\partial M^\inn, \lambda^\inn,\partial L^\inn)$.
Hence $x_-\bigl([0,1]\bigr), x_+\bigl([0,1]\bigr) \subset M^\inn(\nu^{\frac{1}{2}})$.
We claim that $u\bigl(\R \times [0,1]\bigr) \subset M^\inn(\nu^{\frac{1}{2}})$ for any
$u \in \hat{\mca{M}}_{(H^s,J^s_t)_{s,t}}(x_-,x_+)$.
First notice that for any 
$\rho \in (\nu^{\frac{1}{2}},1]$, 
$D_\rho:= \R \times [0,1] \setminus u^{-1}\bigl(\interior M^\inn(\rho) \bigr)$ is a compact set.
If the claim is not true, there exists $\rho \in (\nu^{\frac{1}{2}},1]$ such that
$D_\rho \ne \emptyset$.
For generic $\rho$, $u$ and $u|_{\R \times \{0,1\}}$is transverse to $\partial M^\inn \times \{\rho\}$, hence
we may assume that $D_\rho$ is a compact surface with boundaries and corners.

Let
\[
\partial_H D_\rho: = \partial D_\rho \cap \R \times \{0,1\}, \qquad
\partial_V D_\rho: = \partial D_\rho \cap \R \times (0,1).
\]
It is easily verified that $\partial_su$ is not constantly $0$ on $D_\rho$.
This implies
\[
\int_{D_\rho} |\partial_s u|_{J^s_t}^2\,dsdt >0.
\]

Since $u$ satisfies the Floer equation $\partial_s u - J^s_t\partial_t u - \nabla^s_t H^s=0$, 
\begin{align*}
&\int_{D_\rho} |\partial_s u|_{J^s_t}^2 + \partial_s H^s\bigl(u(s,t)\bigr) ds dt
=\int_{D_\rho} \hat{\omega}(\partial_t u, \partial_s u) + dH^s(\partial_s u) + 
            \partial_s H^s\bigl(u(s,t)\bigr) dsdt \\
&=\int_{\partial D_\rho} -u^*\hat{\lambda} + H^s\bigl(u(s,t)\bigr) dt.
\end{align*}

We calculate the last term. First we calculate the integration on $\partial_H D_\rho$:
\[
\int_{\partial_H D_\rho} -u^*\hat{\lambda} + H^s\bigl(u(s,t)\bigr) dt
=\int_{\partial_H D_\rho} -u^*\hat{\lambda} =0.
\]
The first equality follows from $dt|_{\partial_H D^\rho}=0$, and the second equality follows from 
$u(\partial_H D_\rho) \subset \hat{L}$ and $\hat{\lambda}|_{\hat{L}}=0$.
On the other hand, since $u(\partial_VD_\rho) \subset \partial M^\inn \times \{\rho\}$, we get
\[
(s,t) \in \partial_V D_\rho \implies
H^s\bigl(u(s,t)\bigr) = a(s)\bigl(\rho-\nu \bigr), \quad
 \hat{\lambda}\bigl(X_{H^s}(u(s,t))\bigr) = a(s)\rho.
\]
Therefore
\[
\int_{\partial_V D_\rho} -u^*\hat{\lambda} + H^s\bigl(u(s,t)\bigr) dt
=\int_{\partial_V D_\rho} \hat{\lambda}(X_{H^s} \otimes dt - du) - \nu \int_{\partial_V D_\rho} a(s) dt.
\]
On the other hand, Floer equation is equivalent to
\[
J^s_t \circ (X_{H^s} \otimes dt -du)=(du - X_{H^s} \otimes dt) \circ j,
\]
where $j$ is a complex structure on $\R \times [0,1]$, defined by $j(\partial_s)=\partial_t$.
Therefore 
\[
\int_{\partial_V D_\rho} \hat{\lambda}(X_{H^s} \otimes dt-du)
=-\int_{\partial_V D_\rho} \hat{\lambda}\bigl(J^s_t \circ(du-X_{H^s} \otimes dt) \circ j\bigr).
\]
$\hat{\lambda}(J^s_t \circ X_{H^s})=-\hat{\lambda}(\nabla^s_t H^s)=0$ on $\partial M^\inn \times \{\rho\}$.
Moreover, if $V$ is a vector tangent to $\partial_V D_\rho$, and positive with respect to the boundary orientation,
then $jV$ points inwards, hence $d\rho(jV) \ge 0$.
Hence $\hat{\lambda}\bigl(J^s_t \circ du \circ j)(V) \ge 0$.
Therefore, 
\[
\int_{\partial_V D_\rho} \hat{\lambda}(X_{H^s} \otimes dt-du) \le 0.
\]
Finally,
\[
\int_{D_\rho} |\partial_s u|_{J^s_t}^2 + \partial_s H^s\bigl(u(s,t)\bigr)\,ds dt
\le - \nu \int_{\partial_V D_\rho} a(s) dt
= - \nu \int_{D_\rho} \partial_s a(s,t)\,dsdt.
\]
Since $\partial_s H^s \ge 0$ and
$\partial_s a \ge 0$(this follows from (1) and $\partial_s H^s \ge 0$),
 this implies
\[
\int_{D_\rho} |\partial_s u|_{J^s_t}^2\,dsdt \le 0.
\]
This is a contradiction.
\end{proof}

\subsection{Handle attaching}
In this subsection, we give a proof of Lemma \ref{lem:handleattaching}.
At first, we need the following lemma, which is easily proved by Moser's trick.

\begin{lem}\label{lem:Moser}
Let $X$ be a manifold and $Y$ be a submanifold of $X$.
Let $(\lambda_t)_{0 \le t \le 1}$ be a smooth family of contact forms on $X$ such that
$\lambda_t|_Y=0$ and $d\lambda_t=d\lambda_0$ for any $t$.

Then, for any compact set $K$ in $Y$, there exists $V$, a neighborhood of $K$ in $X$ ,
and $(\psi_t)_{0 \le t \le 1}$, a smooth family of embeddings from $V$ to $X$ with following properties:
\begin{enumerate}
\item $\psi_0$ is the inclusion map $V \to X$.
\item $\psi_t^*\lambda_0=\lambda_t$.
\item $\psi_t^{-1}(Y)= V \cap Y$.
\item $\psi_t|_{V \cap Y}$ is the inclusion map $V \cap Y \to X$.
\end{enumerate}
\end{lem}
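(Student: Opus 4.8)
The plan is to run Moser's argument for the family $(\lambda_t)$. Put $\sigma_t := \partial_t \lambda_t$. Since $d\lambda_t = d\lambda_0$ for all $t$, differentiating gives $d\sigma_t = 0$; since $\lambda_t|_Y = 0$, we have $\sigma_t|_{TY} = 0$; and since $\ker(d\lambda_0)$ is a fixed line field, the Reeb field $R_t$ of $\lambda_t$ is a positive multiple of $R_0$, so $R_t$ is nowhere tangent to $Y$ (as $\lambda_t|_Y = 0$ forces $TY \subseteq \ker\lambda_t$). I would look for a time-dependent vector field $Z_t$, defined near the compact set $K$, with
\begin{equation*}
L_{Z_t}\lambda_t = -\sigma_t \qquad\text{and}\qquad Z_t|_Y = 0.
\end{equation*}
Letting $\phi_t$ be its flow, the first condition gives $\partial_t(\phi_t^*\lambda_t) = \phi_t^*(L_{Z_t}\lambda_t + \sigma_t) = 0$, hence $\phi_t^*\lambda_t = \lambda_0$, while the second makes $\phi_t$ fix $Y$ pointwise; then $\psi_t := \phi_t^{-1}$ will be the desired family.

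To build $Z_t$ I would write $Z_t = h_t R_t + W_t$ with $h_t$ a function and $W_t$ a section of $\ker\lambda_t$. Using $i_{R_t}d\lambda_t = 0$, the equation $L_{Z_t}\lambda_t = -\sigma_t$ splits into two pieces: $W_t$ is the unique section of $\ker\lambda_t$ with $i_{W_t}(d\lambda_t)|_{\ker\lambda_t} = -(dh_t + \sigma_t)|_{\ker\lambda_t}$ (solvable, since $d\lambda_t$ is non-degenerate on $\ker\lambda_t$), and $h_t$ satisfies the transport equation $R_t(h_t) = -\sigma_t(R_t)$ along Reeb orbits. As $R_t$ is nowhere tangent to $Y$, there is a hypersurface $\Sigma$ transverse to $R_0$ (hence to every $R_t$) near $K$ and containing a neighborhood of $K$ in $Y$; prescribing $h_t$ on $\Sigma$ and integrating along Reeb orbits solves the transport equation on a neighborhood of $K$. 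The remaining freedom is the $1$-jet of $h_t$ along $Y$ inside $\Sigma$, and I would use it to impose $h_t|_Y = 0$ together with $dh_t = -\sigma_t$ at points of $Y$: this prescription is consistent precisely because $\sigma_t|_{TY} = 0 = d(h_t|_Y)$, so only a transversal derivative is being fixed, and it forces $(dh_t + \sigma_t)|_{\ker\lambda_t}$, hence $W_t$, to vanish along $Y$, giving $Z_t|_Y = 0$.

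Finally, since $K$ is compact and $Z_t$ vanishes on $Y$, for a sufficiently small neighborhood $V$ of $K$ the flow $\phi_t \colon V \to X$ exists for all $t \in [0,1]$ and fixes $V \cap Y$; shrinking $V$ so that $\psi_t := \phi_t^{-1}$ is defined on $V$ for all $t$, properties (1), (2), (4) are immediate, and (3) holds because $\psi_t$ is injective with inverse $\phi_t$, which maps $V \cap Y$ into $Y$. Smoothness of $(\psi_t)_t$ in $t$ comes from smooth dependence of flows on parameters. The step needing genuine care is solving the Moser equation and the constraint $Z_t|_Y = 0$ simultaneously; this is exactly where the hypotheses $\lambda_t|_Y = 0$ (giving $\sigma_t|_{TY} = 0$, which makes the prescribed $1$-jet of $h_t$ consistent) and $d\lambda_t = d\lambda_0$ (keeping the Reeb line, hence $\Sigma$, independent of $t$) are used.
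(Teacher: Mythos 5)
Your argument is correct, and the outer frame is the same as the paper's: produce a time-dependent vector field $Z_t$ with $L_{Z_t}\lambda_t=-\partial_t\lambda_t$ and $Z_t|_Y=0$, integrate it, and take $\psi_t$ to be the inverse of the flow. Where you genuinely diverge is in how $Z_t$ is built. The paper never splits $Z_t=h_tR_t+W_t$ and never solves a transport equation: it picks a neighborhood $W$ of $K$ for which the restriction $H^*_\dR(W)\to H^*_\dR(W\cap Y)$ is an isomorphism, notes that $\sigma_t=\partial_t\lambda_t$ is closed (since $d\lambda_t=d\lambda_0$) and vanishes on $Y$, hence is exact on $W$ with a primitive $f_t$ normalized so that $f_t|_Y=0$, and sets $\xi_t:=-f_tR_t$. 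Then $i_{R_t}d\lambda_t=0$ and $\lambda_t(R_t)=1$ give $L_{\xi_t}\lambda_t=d(i_{\xi_t}\lambda_t)=-df_t=-\sigma_t$ at once, and $\xi_t$ vanishes on $Y$ because $f_t$ does. In your notation this is the special choice $h_t=-f_t$ with $dh_t=-\sigma_t$ \emph{everywhere}, not just along $Y$: it makes your contact-distribution component $W_t$ vanish identically and renders the transport equation automatic, so the transversal hypersurface $\Sigma$, the consistency of the prescribed $1$-jet of $h_t$ along $Y$, and the smooth dependence of the transport solution on $t$ are all unnecessary. What your version buys is independence from the cohomological choice of $W$ (you only use closedness of $\sigma_t$ pointwise, not its exactness near $K$); what the paper's version buys is brevity and the elimination of every analytic step except an ODE flow. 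Note that the hypothesis $\lambda_t|_Y=0$ enters both proofs at the structurally identical spot: for you it is what makes the prescription $h_t|_Y=0$, $dh_t|_Y=-\sigma_t$ coherent; for the paper it is what allows $f_t$ to be chosen vanishing on $Y$.
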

\begin{proof}
First we show that there exists $W$, a neighborhood of $K$ in $X$, and 
$(\xi_t)_t$, a family of vector fields on $W$ such that 
$L_{\xi_t}\lambda_t+\partial_t \lambda_t=0$
and $\xi_t=0$ on $W \cap Y$.

Take $W$, a neighborhood of $K$ in $X$ so that the restriction morphism
$H^*_\dR(W) \to H^*_\dR(W \cap Y)$ is an isomorphism.
Since $d\lambda_t = d\lambda_0$ for any $t$, 
$\partial_t \lambda_t$ is a closed form.
Moreover, $\partial_t\lambda_t\big\vert_{Y} = 0$ since 
$\lambda_t|_Y = 0$ for any $t$.
Hence $(\partial_t\lambda_t)_t$ is a smooth family of 
exact one forms on $W$.
Hence there exists $(f_t)_t$, a family of $C^\infty$ functions on $W$ such that
$df_t= \partial_t \lambda_t$.
We may assume that $f_t$ vanishes on $Y$, since $\partial_t\lambda_t$ vanishes
on $Y$ and $H^0_\dR(W) \to H^0_\dR(W \cap Y)$ is an isomorphism.

Let $R_t$ be the Reeb vector field of $(X,\lambda_t)$ and
$\xi_t:=-f_tR_t$. Then, $\xi_t$ vanishes on $Y$ and
\[
L_{\xi_t}\lambda_t = i_{\xi_t}(d\lambda_t) + d(i_{\xi_t}\lambda_t) = -df_t = -\partial_t \lambda_t.
\]

Integrating $(\xi_t)_t$, we obtain $(\varphi_t)_t$, a family of embeddings from certain neighborhood of $K$ to $X$.
Then, $\varphi_t^*\lambda_t=\lambda_0$.
Finally, if we take $V$ sufficiently small, $\psi_t:=(\varphi_t)^{-1}|_V$ can be defined for all $0 \le t \le 1$ and 
satisfies the condition of the lemma.
\end{proof}

From now on, we start the proof of Lemma \ref{lem:handleattaching}, and we continue to use notations 
introduced in section 4.
For sufficiently small $\delta>0$, we define subsets of $\pi_N^{-1}(U)$, $A^-_\delta, A^+_\delta, B_\delta, C_\delta$ by
\begin{align*}
A^-_\delta&=\bigl\{(q,p) \bigm{|} |p|^2+|q_+|^2=|q_-|^2-2\varepsilon < \delta \bigr\}, \\
A^+_\delta&=\bigl\{(q,p) \bigm{|} |p|^2+|q_+|^2=\mu_\delta\bigl(|q_-|^2\bigr) < \delta \bigr\},\\
B_\delta&=\bigl\{(q,p) \bigm{|} |p|^2+|q_+|^2=|q_-|^2-2\varepsilon=\delta \bigr\}, \\
C_\delta&=\bigl\{(q,p) \bigm{|} |q_-|^2-2\varepsilon \le |p|^2+|q_+|^2 < \mu_\delta\bigl(|q_-|^2\bigr)\bigr\} \cup A^+_\delta.
\end{align*}
Recall that we have considered $\pi_N^{-1}(U)$ as a subset of ${\R}^{2n}$ using 
coordinate $(q,p)$.
Hence we consider these sets also as subsets of ${\R}^{2n}$.

We have shown in Lemma \ref{lem:Ddelta} that $(D_\delta, \omega_N, Y_a, D_\delta \cap N)$ is a Liouville quadruple
for sufficiently small $a$.
In the following of this paper, we fix such $a$ and denote it by $a_0$. 

Take arbitrary smooth function $a$ on $[0,1]$ such that $a(0)=a_0$ and $a(1)=\frac{1}{2}$.
By Lemma \ref{lem:Moser}, there exists $V$, a neighborhood of $\Sigma$ in 
$\partial D_-$, and 
$(\psi_t)_t$, a family of embeddings from $V$ to $\partial D_-$ with following properties:
\begin{enumerate}
\item $\psi_0$ is the inclusion map $V \to \partial D_-$.
\item $\psi_t^*\lambda_{a_0} = \lambda_{a(t)}$. ($\lambda_a$ denotes $i_{Y_a}\omega_N$.)
\item $\psi_t^{-1}(\partial D_- \cap N) =V \cap N$.
\item $\psi_t|_{V \cap N}$ is the inclusion map $V \cap N \to \partial D_-$.
\end{enumerate}

Since $\bigcap_{\delta>0} A^-_\delta = \Sigma$, $A^-_\delta \subset V$ for sufficiently small $\delta>0$.
If $A^-_\delta \subset V$, 
$\bigl(C_\delta,\omega_\st,Y_{a(t)},C_\delta \cap N\bigr)$ is glued to
$(D_-, \omega_N, Y_{a_0}, D_- \cap N)$ by $\psi_t|_{C_\delta \cap \partial D_-}$.
As a result, we get a Liouville quadruple.
We denote it by $\bigl(C_\delta \cup_{\psi_t} D_-, \omega_t, Z_t, L_t\bigr)$.

We make two remarks which are clear from constructions:
\begin{rem}\label{rem:handleattaching}
\begin{enumerate}
\item $\partial (C_\delta \cup_{\psi_t} D_-) = \bigl(\partial D_- \setminus \psi_t(A^-_\delta)\bigr) \cup A^+_\delta$.
\item For any $\delta, \delta'>0$, $\widehat{C_\delta \cup_{\psi_t} D_-}$ and
$\widehat{C_{\delta'} \cup_{\psi_t} D_-}$ can be identified naturally.
\end{enumerate}
\end{rem}

It is clear from construction that $\bigl(C_\delta \cup_{\psi_0} D_-, \omega_0, Z_0, L_0\bigr)$ 
is isomorphic to $(D_\delta,\omega_N,Y_{a_0},D_\delta \cap N)$ as Liouville quadruple.
Hence, by Proposition \ref{prop:isotopyinvarianceofWFH}, to prove Lemma \ref{lem:handleattaching}
it is enough to show that
\begin{equation}\label{eq:L1}
\WFH_*\bigl(C_\delta \cup_{\psi_1} D_-, \omega_1, L_1\bigr) \cong \WFH_*(D_-, \omega_N, D_- \cap N).
\end{equation}

Let $(\alpha_i)_i$ be an increasing sequence of positive numbers, such that
$\lim_{i \to \infty} \alpha_i = \infty$ and 
$\alpha_i \notin \mca{A}(\partial D_-, \lambda_{a_0}, \partial D_-\cap N)$.
Let $\nu \in (0,1)$, and take $F_i \in \mca{H}_{\ad}(\hat{D_-})$ such that:
\begin{enumerate}
\item[$F$-(1):] $F_1(x)<F_2(x)<\cdots$ for any $x \in \hat{D_-}$.
\item[$F$-(2):] $F_i(z,\rho)=\alpha_i(\rho-\nu)$ on $\partial D_- \times [\nu^{\frac{1}{2}},\infty)$.
\end{enumerate}

Since $a_{F_i}=\alpha_i \to \infty$ as $i \to \infty$, 
\begin{equation}\label{eq:WFH(D_-)}
\WFH_*(D_-,\omega_N,D_-\cap N) = \lim_{i \to \infty} \WFH_*(F_i)
\end{equation}

Hence to prove (\ref{eq:L1}), it is enough to show
\begin{equation}\label{eq:goal}
\WFH_{\le m}\bigl(C_\delta \cup_{\psi_1} D_-, \omega_1, L_1\bigr) \cong \lim_{i \to \infty} \WFH_{\le m}(F_i)
\end{equation}
for each positive integer $m$.
In the following, we fix $\delta$ and denote it by $\delta_0$.

Denote the Reeb vector field on $(\partial D_-, \lambda_{a_0})$ by $R$.

\begin{lem}\label{lem:delta}
For any $\alpha>0$, there exists $\delta(\alpha)>0$ such that
any $\delta \in \bigl(0, \delta(\alpha)\bigr)$ satisfies following:
\begin{quote}
Assume that $x:I \to \partial D_-$ satisfies $\dot{x}=R(x)$, $x(\partial I) \subset \psi_1(B_\delta) \cup (\partial D_- \cap N)$, 
$x(\partial I) \cap \psi_1(B_\delta) \ne \emptyset$ and 
$x(I)$ is not contained in $\psi_1(A^-_{\delta_0})$.
Then, $|I|>\alpha$.
\end{quote}
\end{lem}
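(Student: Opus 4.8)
The plan is to argue by contradiction, extracting a limiting Reeb chord of $\partial D_-\cap N$ and invoking Lemma~\ref{lem:Dpm}(3). We work with $\delta_0$ small enough that $A^-_{\delta_0}$ lies in $\pi_N^{-1}(U)$, inside the locus where $H\equiv H_V$, and inside the domain $V$ of $\psi_1$; then $A^-_{\delta_0}$ is relatively open in $\partial D_-$, $\psi_1(A^-_{\delta_0})$ is a relatively open subset of $\partial D_-$, and $\Sigma=\psi_1(\Sigma)\subset\psi_1(A^-_{\delta_0})$ (using $\psi_1|_{V\cap N}=\id$ and $\Sigma\subset V\cap N$). Suppose the assertion fails for some $\alpha>0$. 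Then there are $\delta_k\downarrow 0$, Reeb trajectories $x_k\colon I_k\to\partial D_-$ satisfying the three hypotheses with $|I_k|\le\alpha$, and points $t_k\in I_k$ with $x_k(t_k)\notin\psi_1(A^-_{\delta_0})$. Since $\partial D_-$ is compact, the Reeb flow $\phi^t$ of $(\partial D_-,\lambda_{a_0})$ is complete; reparametrising each $x_k$ so that a boundary point of $I_k$ lying in $\psi_1(B_{\delta_k})$ sits at parameter $0$, and passing to a subsequence, we may assume $I_k=[0,l_k]$ with $0\le l_k\le\alpha$, $x_k(t)=\phi^t(x_k(0))$, and $x_k(0)\in\psi_1(B_{\delta_k})$ (the case $I_k=[-l_k,0]$ being identical).

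Now I would pass to the limit. Because $B_{\delta_k}\subset\{\,|q_-|^2=2\varepsilon+\delta_k,\ |p|^2+|q_+|^2=\delta_k\,\}$, it Hausdorff-converges to $\Sigma$ as $\delta_k\downarrow 0$; since $\psi_1$ is a smooth embedding fixing $\Sigma$ pointwise, $\psi_1(B_{\delta_k})$ also Hausdorff-converges to $\Sigma$. Passing to a further subsequence, $x_k(0)\to z_0\in\Sigma$, $l_k\to l_\infty\in[0,\alpha]$ and $t_k\to t_\infty\in[0,l_\infty]$. Set $x_\infty\colon[0,l_\infty]\to\partial D_-$, $x_\infty(t):=\phi^t(z_0)$; by joint continuity of the Reeb flow, $x_k(l_k)\to x_\infty(l_\infty)=:z_1$ and $x_k(t_k)\to x_\infty(t_\infty)$. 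Each $x_k(l_k)$ lies in $\psi_1(B_{\delta_k})\cup(\partial D_-\cap N)$, so, $\partial D_-\cap N$ being closed and $\psi_1(B_{\delta_k})\to\Sigma\subset\partial D_-\cap N$, we get $z_1\in\partial D_-\cap N$; and $z_0\in\Sigma\subset\partial D_-\cap N$ as well. Thus $x_\infty$ is a Reeb trajectory of $(\partial D_-,\lambda_{a_0})$ with both endpoints on the Legendrean $\partial D_-\cap N$.

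To see that $x_\infty$ is nonconstant, note that $\partial D_-\setminus\psi_1(A^-_{\delta_0})$ is closed, so $x_\infty(t_\infty)=\lim_k x_k(t_k)\notin\psi_1(A^-_{\delta_0})$, whereas $x_\infty(0)=z_0\in\Sigma\subset\psi_1(A^-_{\delta_0})$; hence $x_\infty(t_\infty)\ne x_\infty(0)$, so $l_\infty>0$ and $x_\infty$ is a genuine nonconstant Reeb trajectory. After reparametrisation it is therefore an element of $\mca{C}(\partial D_-,\partial D_-\cap N)$ --- recall from Remark~\ref{rem:WFH-} that Reeb chords depend only on the characteristic foliation, hence only on $\omega_N$ --- and one of its endpoints, namely $z_0$, lies in $\Sigma$. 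This contradicts Lemma~\ref{lem:Dpm}(3), which asserts that no element of $\mca{C}(\partial D_-,\partial D_-\cap N)$ has an endpoint in $\Sigma$. The contradiction proves the lemma.

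The argument is essentially soft compactness, and the single input carrying real content is Lemma~\ref{lem:Dpm}(3) --- itself a consequence of the generic choice of $\varepsilon$ supplied by Lemma~\ref{lem:perturb} --- which is exactly what prevents the limiting chord from touching $\Sigma$. I expect the only fussy point to be the preliminary bookkeeping in the Morse coordinates: checking, for $\delta_0$ (and then the $\delta_k$) small, that $\Sigma$, $A^-_{\delta_0}$ and the $B_{\delta_k}$ sit inside $\pi_N^{-1}(U)$, inside the locus where $H\equiv H_V$, and inside the domain $V$ of $\psi_1$, so that all the sets $\psi_1(B_{\delta_k})$, $\psi_1(A^-_{\delta_0})$ and the Hausdorff-convergence statements above are meaningful; once that is in place the rest is routine.
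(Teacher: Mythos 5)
Your proof is correct and follows essentially the same route as the paper: argue by contradiction, use compactness of $\partial D_-$ and the shrinking of $\psi_1(B_{\delta})$ (resp.\ $\psi_1(\overline{A^-_\delta})$) to $\Sigma$ to extract a limiting Reeb trajectory with an endpoint on $\Sigma$ and both endpoints on $\partial D_-\cap N$, observe it is nonconstant because it leaves the open set $\psi_1(A^-_{\delta_0})\supset\Sigma$, and contradict Lemma \ref{lem:Dpm}(3). The paper merely asserts the existence of the limit chord, whereas you spell out the Arzel\`a--Ascoli/flow-continuity details; the substance is identical.
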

\begin{proof}
Assume the assertion is not true.
Then, there exists $y: J \to \partial D_-$ such that
$\dot{y}=R(y)$, 
$y(\partial J) \subset \bigcap_{\delta>0}\psi_1\Bigl(\overline{A^-_\delta}\Bigr)\cup(\partial D_- \cap N)$, 
$y(\partial J) \cap \bigcap_{\delta>0}\psi_1\Bigl(\overline{A^-_\delta}\Bigr) \ne \emptyset$, and
$y(J)$ is not contained in $\psi_1(A^-_{\delta_0})$ (we use $B_\delta \subset \overline{A^-_\delta}$).
On the other hand,
\[
\bigcap_{\delta>0}\psi_1\Bigl(\overline{A^-_\delta}\Bigr)
=\psi_1\Biggl(\bigcap_{\delta>0} \overline{A^-_\delta}\Biggr)
=\psi_1(\Sigma)=\Sigma.
\]
In the last equality, we use property (4) of $\psi_t$.
Hence $y(\partial J) \subset \partial D_- \cap N$, and $y(\partial J) \cap \Sigma \ne \emptyset$.
Since $\Sigma \subset \psi_1(A^-_{\delta_0})$, $y$ is not constant and $|J|>0$. 
Hence $y \in \mca{C}(\partial D_-, \lambda_{a_0},\partial D_- \cap N)$.
But this contradicts (3) in Lemma \ref{lem:Dpm}.
\end{proof}

We can take sequences $(\delta_i)_i$ and $(G_i)_i$, 
where $\delta_i \in \R_{>0}$ and $G_i \in \mca{H}_{\ad}(\widehat{C_{\delta_0}\cup_{\psi_1}D_-})$,
such that $(\delta_i)_i$ satisfies following properties:
\begin{enumerate}
\item[$\delta$-(1):] $0<\delta_i<\min\bigr\{\delta_0, \delta(\alpha_i) \bigr\}$.
\item[$\delta$-(2):] $\delta_1 > \delta_2 > \cdots$.
\item[$\delta$-(3):] $\lim_{i \to \infty} \delta_i = 0$.
\end{enumerate}
and $(G_i)_i$ satisfies following properties:
\begin{enumerate}
\item[$G$-(1):] $G_i|_{D_-} = F_i|_{D_-}$.
\item[$G$-(2):] $a_{G_i} \to \infty$ as $i \to \infty$.
\item[$G$-(3):] There exists a sequence $i_1<i_2<\cdots$ such that $G_{i_1}(x) < G_{i_2}(x) < \cdots$ for any 
$x \in \widehat{C_{\delta_0} \cup_{\psi_1} D_-}$.
\item[$G$-(4):] By Remark \ref{rem:handleattaching}, there exists an embedding 
$\bigl(\partial D_- \setminus \psi_1(A^-_\delta)\bigr) \times [1,\infty) \to \widehat{C_{\delta_0} \cup_{\psi_1} D_-}$,
and we identify $\bigl(\partial D_- \setminus \psi_1(A^-_\delta)\bigr) \times [1,\infty)$ with its image.
Then, $G_i(z,\rho)=\alpha_i(\rho-\nu)$ on $\bigl(\partial D_- \setminus \psi_1(A^-_\delta)\bigr) \times [1,\infty)$.
\item[$G$-(5):] Let $\Psi: C_{\delta_0}\cup A^+_{\delta_0}\times [1,\infty) \to \R^{2n}$ be the embedding such that
$\Psi|_{C_{\delta_0}}$ is the inclusion map $C_{\delta_0} \to \R^{2n}$, and 
$\partial_\rho \Psi(z,\rho)=\rho^{-1}Y_{\frac{1}{2}}$ on $A^+_{\delta_0} \times [1,\infty)$.
We identify $C_{\delta_0} \cup A^+_{\delta_0} \times [1,\infty)$ with its 
image via $\Psi$ (note that $\Psi^*\omega_{\st}=\omega_1$).
Then, on $C_{\delta_0} \cup A^+_{\delta_0} \times [1,\infty)$, 
$G_i$ satisfies following properties (a)-(c), with respect to the coordinates on $\R^{2n}$:
\begin{enumerate}
\item[(a)] There exists $g_i \in C^\infty(\R_{\ge 0})$, such that $G_i(q,p)=g_i\bigl(|q_+|^2+|p_+|^2\bigr)$ if 
$|q_-|=|p_-|=0$. Moreover, $g_i'(t) \notin \frac{\pi}{2}\Z$ for any $t \in \R_{\ge 0}$.
\item[(b)] For $1 \le j \le k$, $\partial_{p_j}G_i/p_j >0$ if $p_j \ne 0$, and 
$\partial_{q_j}G_i/q_j <0$ if $q_j \ne 0$.
\item[(c)] There exist $A_i>\frac{(m+k)\pi}{2}$, $B_i>0$ and $C_i<0$ such that 
\[
G_i(q,p)=G_i(0,\ldots,0)+A_i\bigl(|p_+|^2+|q_+|^2) + B_i|p_-|^2 + C_i|q_-|^2
\]
on some neighborhood of $(0,\ldots, 0)$.
\end{enumerate}
\end{enumerate} 

\begin{rem}\label{rem:Gi}
The idea for construction of $(G_i)_i$ is as follows:
first, we define $G'_i: \widehat{C_{\delta_0} \cup_{\psi_1} D_-} \to \R$ by 
\[
G'_i(x) = \begin{cases}
           F_i(x) &( x \in D_-) \\
           \alpha_i(1-\nu) &(x \in C_{\delta_i}) \\
           \alpha_i(\rho-\nu) &\bigl(x = (z,\rho) \in \partial(C_{\delta_i}\cup_{\psi_1}D_-) \times [1,\infty)\bigr)
          \end{cases}.
\]
Then, $(G'_i)_i$ satisfies $G$-(1) to (4) though it is not smooth.
The idea is to replace $G'_i$ with $G_i$, which is smooth and satisfies $G$-(5), without violating $G$-(1) to (4).
This is achieved by elementary arguments, but we do not try to spell out details.
\end{rem}

The properties (a)-(c) in $G$-(5) look complicated, but they are necessary to show:

\begin{lem}\label{lem:G5}
If $x \in \mca{C}(G_i)$ satisfies $x\bigl([0,1]\bigr) \subset C_{\delta_0} \cup A_{\delta_0}^+ \times [1,\infty)$, 
then $x$ is the constant map to $(0,\ldots,0)$ and $\ind x>m$.
\end{lem}
\begin{proof}
First we show that $x\bigl([0,1]\bigr) \subset \{p_-=q_-=0\}$. Denote $x(t)=\bigl(q(t), p(t)\bigr)$ and 
consider $E(t)=q_-(t) \cdot p_-(t)$. 
Then $\partial_t E= \sum_{1 \le j \le k} \partial_{p_j}G_i \cdot p_j - \partial_{q_j}G_i \cdot q_j$.
By (b) in $G$-(5), for each $t \in [0,1]$, $\partial_t E(t) \ge 0$ and equality holds if and only if $p_-(t)=q_-(t)=0$.
On the other hand, $E(0)=E(1)=0$ since $x(0), x(1) \in \{p=0\}$. Hence $p_-(t)=q_-(t)=0$ for any $t \in [0,1]$.
By (a) in $G$-(5), 
$X_{G_i}(q,p)= 2g_i'\bigl(|p_+|^2+|q_+|^2\bigr) (p_+ \partial_{q_+} - q_+ \partial_{p_+})$ on $\{p_-=q_-=0\}$.
Since $2g_i'(t) \notin \pi \Z$ for any $t$, $x$ must be the constant map to $(0,\ldots,0)$.
By (c) in $G$-(5),
$X_{G_i}(q,p)= 2A_i(p_+\partial_{q_+} - q_+ \partial_{p_+}) + 2B_i p_-\partial_{q_-} - 2C_i q_- \partial_{p_-}$
on some neighborhood of $(0,\ldots,0)$.
Then, $\ind x>m$ follows from $A_i>\frac{(m+k)\pi}{2}$, $B_i>0$ and $C_i<0$.
\end{proof}

By $G$-(1), $D_-$ is an invariant set of $X_{G_i}$. Hence
$\mca{C}(G_i)$ is divided into two subsets:
\[
\mca{C}_-(G_i) = \bigl\{  x \in \mca{C}(G_i) \bigm{|} x\bigl([0,1]\bigr) \subset D_- \bigr\}, \quad
\mca{C}_+(G_i) = \bigl\{  x \in \mca{C}(G_i) \bigm{|} x\bigl([0,1]\bigr) \cap D_- = \emptyset \bigr\}.
\]
By $G$-(1), $\mca{C}_-(G_i)$ can be identified with $\mca{C}(F_i)$.

\begin{lem}\label{lem:C+}
If $x \in \mca{C}_+(G_i)$, then $x\bigl([0,1]\bigr) \subset C_{\delta_0} \cup A^+_{\delta_0} \times [1,\infty)$.
\end{lem}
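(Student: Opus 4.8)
The plan is to argue by contradiction: assume $x\in\mca{C}_+(G_i)$ does not satisfy $x([0,1])\subset C_{\delta_0}\cup A^+_{\delta_0}\times(1,\infty)$, and use the explicit form of $G_i$ on the cylindrical end (property $G$-(4)) together with Lemma~\ref{lem:delta} and the choice of the sequence $(\alpha_i)_i$ to derive a contradiction. First I would record, from remark~\ref{rem:handleattaching}(1), the decomposition
\[
\widehat{C_{\delta_0}\cup_{\psi_1}D_-} = D_- \cup C_{\delta_0} \cup \bigl((\partial D_-\setminus\psi_1(A^-_{\delta_0}))\times[1,\infty)\bigr)\cup\bigl(A^+_{\delta_0}\times[1,\infty)\bigr),
\]
so that the complement of $D_-\cup\bigl(C_{\delta_0}\cup A^+_{\delta_0}\times(1,\infty)\bigr)$ is contained in the open cylindrical end $(\partial D_-\setminus\psi_1(A^-_{\delta_0}))\times(1,\infty)$. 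Since $x([0,1])\cap D_-=\emptyset$ by definition of $\mca{C}_+(G_i)$, the assumption forces $x(t_0)\in(\partial D_-\setminus\psi_1(A^-_{\delta_0}))\times(1,\infty)$ for some $t_0$. Because $\delta_i<\delta_0$ (by $\delta$-(1) and $\delta$-(2)) this point lies in the interior of $\mca{N}:=(\partial D_-\setminus\psi_1(A^-_{\delta_i}))\times[1,\infty)$, regarded as a subset of $\widehat{C_{\delta_0}\cup_{\psi_1}D_-}$ via remark~\ref{rem:handleattaching}(2); on $\mca{N}$ one has $G_i=\alpha_i(\rho-\nu)$ (by $G$-(4)), the Liouville structure is cylindrical with contact form $\lambda_{a_0}$, hence $X_{G_i}=\alpha_i R$ with $R$ the Reeb field of $(\partial D_-,\lambda_{a_0})$; moreover $\hat L_1\cap\mca{N}$ lies in $(\partial D_-\cap N)\times[1,\infty)$ and the part of $\partial\mca{N}$ disjoint from $D_-$ is the "wall" $\psi_1(B_{\delta_i})\times(1,\infty)$.

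Next I would pass to a maximal closed subinterval $[a,b]\subset[0,1]$ containing $t_0$ with $x([a,b])\subset\mca{N}$; it is nondegenerate since $x(t_0)$ is interior to $\mca{N}$. On $[a,b]$ the component $\rho\circ x$ is constant, equal to some $\rho_*>1$ (it is preserved by $X_{G_i}=\alpha_i R$, and $x$ avoids $D_-\supset(\partial D_-\setminus\psi_1(A^-_{\delta_i}))\times\{1\}$), and the $\partial D_-$-component $z$ solves $\dot z=\alpha_i R(z)$; after the reparametrization $\tilde z(\tau):=z(a+\tau/\alpha_i)$ one obtains an integral curve of $R$ on $I:=[0,\alpha_i(b-a)]$ with $0<|I|\le\alpha_i$. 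I would then identify the endpoints: if $a=0$ (resp.\ $b=1$) then $x$ there lies in $\hat L_1\cap\mca{N}\subset(\partial D_-\cap N)\times(1,\infty)$, so $z\in\partial D_-\cap N$; and if $a>0$ (resp.\ $b<1$) then by maximality $x$ there lies on $\partial\mca{N}$, hence---since $x$ avoids $D_-$---on the wall, so $z\in\psi_1(B_{\delta_i})$. In all cases $\tilde z(\partial I)\subset(\partial D_-\cap N)\cup\psi_1(B_{\delta_i})$.

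Finally I would split into two cases. If $[a,b]=[0,1]$, then $\tilde z$ is a Reeb chord of $(\partial D_-,\lambda_{a_0})$ with both endpoints on $\partial D_-\cap N$ and action $\mca{A}(\tilde z)=|I|=\alpha_i$, contradicting $\alpha_i\notin\mca{A}(\partial D_-,\lambda_{a_0},\partial D_-\cap N)$. Otherwise at least one endpoint of $[a,b]$ is interior to $[0,1]$, so $\tilde z(\partial I)\cap\psi_1(B_{\delta_i})\ne\emptyset$; applying Lemma~\ref{lem:delta} with $\alpha=\alpha_i$ and $\delta=\delta_i$ (legitimate since $\delta_i<\delta(\alpha_i)$ by $\delta$-(1)) and using $|I|\le\alpha_i$, its contrapositive yields $\tilde z(I)\subset\psi_1(A^-_{\delta_0})$; but $z(t_0)\in\partial D_-\setminus\psi_1(A^-_{\delta_0})$ by the choice of $t_0$, a contradiction. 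Either way the assumption is untenable, which proves the lemma. I expect the main obstacle to lie not in this dichotomy but in the bookkeeping of the first paragraph: making the identification of remark~\ref{rem:handleattaching}(2) explicit enough that $G_i$, the Lagrangian $\hat L_1$, and the Reeb dynamics all take the claimed cylindrical form on $\mca{N}$, and in particular that the relevant wall is $\psi_1(B_{\delta_i})$---which sits strictly inside the gluing region $\psi_1(A^-_{\delta_0})$, away from the corner of $\partial(C_{\delta_0}\cup_{\psi_1}D_-)$---rather than $\psi_1(B_{\delta_0})$; this is precisely the role of the shrinking sequence $(\delta_i)_i$.
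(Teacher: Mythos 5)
Your proof is correct and follows essentially the same route as the paper's: pass to the maximal subinterval on which the chord stays in the cylindrical end over $\partial D_-\setminus\psi_1(A^-_{\delta_i})$, use $G$-(4) to identify the dynamics there with the (reparametrized) Reeb flow, and rule out the possible endpoint configurations via lemma \ref{lem:delta} (legitimately invoked since $\delta_i<\delta(\alpha_i)$) together with $\alpha_i\notin\mca{A}(\partial D_-,\lambda_{a_0},\partial D_-\cap N)$. Your explicit case split (full interval giving a Reeb chord of action $\alpha_i$, versus an interior endpoint forced onto the wall $\psi_1(B_{\delta_i})$) merely spells out what the paper's more compressed argument leaves implicit.
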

\begin{proof}
Assume that there exists $\tau \in [0,1]$ such that $x(\tau) \notin C_{\delta_0} \cup A^+_{\delta_0} \times (1,\infty)$, hence
$x(\tau) \in \bigl(\partial D_- \setminus \psi_1( A^-_{\delta_0})\bigr) \times [1,\infty)$.
Let $I$ be the largest closed interval which contains $\tau$ and $x(I) \subset \bigl(\partial D_- \setminus \psi_1(A^-_{\delta_i})\bigr) \times [1,\infty)$.
Then $|I|>0$, and $x(\partial I)$ is contained in 
$\bigl(\psi_1(B_{\delta_i}) \cup (\partial D_- \cap N)\bigr) \times [1,\infty)$.

By $G$-(4), $X_{G_i}=\alpha_i \cdot (R,0)$ on $\bigl(\partial D_- \setminus \psi_1(A^-_{\delta_i})\bigr) \times [1,\infty)$.
Define $y:I \to \partial D_-$ by $y=\pi \circ x$, where $\pi$ is the projection to $\partial D_- \setminus \psi_1(A^-_{\delta_i})$.
Then $\dot{y}=\alpha_i R(y)$, $y(\partial I) \subset \psi_1(B_{\delta_i}) \cup (\partial D_- \cap N)$ and
$y(\tau) \notin \psi_1(A^-_{\delta_0})$.
Since $\delta_i < \delta(\alpha_i)$, $y(\partial I) \cap \psi_1(B_{\delta_i})=\emptyset$. Hence
$y(\partial I) \subset \partial D_- \cap N$ and $I=[0,1]$, 
but this contradicts $\alpha_i \notin \mca{A}(\partial D_-, \lambda_{a_0}, \partial D_- \cap N)$.
\end{proof}

By Lemma \ref{lem:G5} and Lemma \ref{lem:C+},
$\mca{C}_+(G_i)$ consists only of the constant map to $(0,\ldots,0)$ and its index is larger than $m$.
Hence $\WFC_{\le m} (G_i)$ is generated by elements of $\mca{C}_-(G_i)$.
On the other hand, since $\mca{C}_-(G_i)$ can be identified with $\mca{C}(F_i)$, 
there is an isomorphism of $\Z_2$ modules
$\WFC_{\le m}(F_i) \to \WFC_{\le m}(G_i)$.
By Lemma \ref{lem:annuluslemma}, 
if almost complex structures (which are used to define differential on $\WFC_*(F_i)$ and $\WFC_*(G_i)$)
satisfy assumption (2) in Lemma \ref{lem:annuluslemma} with $M^\inn=D_-$, 
this is an isomorphism of chain complexes.
Denote this isomorphism by $\Phi_i$.

Take $(i_k)_k$ as in $G$-(3), and consider following diagram:
\[
\xymatrix{
\WFC_{\le m}\bigl(F_{i_k}\bigr)\ar[r]\ar[d]_{\Phi_{i_k}}& \WFC_{\le m}(F_{i_{k+1}})\ar[d]^{\Phi_{i_{k+1}}}\\
\WFC_{\le m}\bigl(G_{i_k}\bigr)\ar[r]& \WFC_{\le m}(G_{i_{k+1}}).
}
\]
Horizontal arrows are monotone morphisms induced by monotone homotopies.

By $F$-(1) and $G$-(3), $F_{i_k}(x)<F_{i_{k+1}}(x)$ for any $x \in \hat{D_-}$, and 
$G_{i_k}(x)<G_{i_{k+1}}(x)$ for any $x \in \widehat{C_{\delta_0} \cup_{\psi_1} D_-}$.
Again by Lemma \ref{lem:annuluslemma}, 
if almost complex structures (which are used to define monotone morphisms)
satisfy assumption (2) in Lemma \ref{lem:annuluslemma} with $M^\inn=D_-$, the above diagram commutes.
Taking homology of this diagram and letting $i \to \infty$, 
we get (last equality follows from $G$-(2))
\[
\lim_{i \to \infty} \WFH_{\le m}(F_i) \cong \lim_{i \to \infty} \WFH_{\le m}(G_i) = \WFH_{\le m}\bigl(C_\delta \cup_{\psi_1} D_-, \omega_1, L_1\bigr)
\]
Hence we have proved (\ref{eq:goal}).

\end{document}